\newtheorem{thm}{Theorem}[section]
\newtheorem{prop}[thm]{Proposition}
\newtheorem{lem}[thm]{Lemma}
\theoremstyle{definition}
\newtheorem{dfn}{Definition}[thm]
\theoremstyle{remark}
\newtheorem{rem}{Remark}[thm]
\newcommand{\C}{\mathbb{C}}
\newcommand{\Proj}{\mathbb{P}}
\newcommand{\X}{\mathcal{X}}
\begin{document}

\title{The Gromov-Witten Theory of Borcea-Voisin Orbifolds and its Analytic Continuations}
\date{December 2014}
\author{Andrew Schaug}
\address{Department of Mathematics, University of Michigan, Ann Arbor, MI, 48109}
\email{trygve@umich.edu}

\maketitle

\begin{abstract}
In the early 1990s, Borcea-Voisin orbifolds were some of the earliest examples of Calabi-Yau threefolds shown to exhibit mirror symmetry. However, their quantum theory has been poorly investigated. We study this in the context of the gauged linear sigma model, which in their case encompasses Gromov-Witten theory and its three companions (FJRW theory and two mixed theories). For certain Borcea-Voisin orbifolds of Fermat type, we calculate all four genus zero theories explicitly. Furthermore, we relate the I-functions of these theories by analytic continuation and symplectic transformation. In particular, the relation between the Gromov-Witten and FJRW theories can be viewed as an example of the Landau-Ginzburg/Calabi-Yau correspondence for complete intersections of toric varieties.
\end{abstract}

\section{Background}

Mirror symmetry has been a driving force in geometry and physics for more than twenty years.
The first mirror symmetric phenomenon to be discovered mathematically was purely at the level of Hodge numbers. That is, it was noticed that many Calabi-Yau manifolds pair up in such a way that the Hodge diamonds of one is the Hodge diamond of the other rotated by a right angle. One of the earliest sets of examples of this broad phenomenon  was discovered by Borcea \cite{B} and Voisin \cite{V}, now known as Borcea-Voisin manifolds; distinguishing them by their K3 surfaces, these form a class of 92 members closed under this type of cohomological mirror symmetry. These are given as resolutions of certain quotients of products of elliptic curves and certain admissible K3 surfaces $\widetilde{(E\times K)}/\mathbb{Z}_2$. The deeper interest in mirror symmetry on the quantum level originates in the fact that in many cases, the physical observables in string theories defined on mirror background Calabi-Yau manifolds  are identical. The first example of this to be demonstrated mathematically was the use of mirror symmetry to predict the number of rational curves on certain Calabi-Yau threefolds. In particular, Givental \cite{Giv} and Lian, Liu and Yau \cite{LLY} related the Gromov-Witten invariants from enumerative geometry of one manifold (encapsulated in a `J-function') to the periods of the Picard-Fuchs equation of its mirror (encapsulated in an `I-function'). 
The methods used in Givental's formalism have been applied to establish this deeper form of mirror symmetry for several pairs of families of manifolds \cite{Giv}, but not yet the Borcea-Voisin manifolds whose example had in fact preceded it.  We hope to address this question in this and subsequent articles.

One method of attack is another physical duality altogether called Landau-Ginzburg (LG)/Calabi-Yau (CY) correspondence.  Different aspects of this physical correspondence have been formalised in a few ways, but the one of most importance here is that produced by \cite{FJR} between Gromov-Witten theory on the Calabi-Yau side, and Fan-Jarvis-Ruan-Witten (FJRW) theory on the Landau-Ginzburg side, both defined for a hypersurface of weighted projective space. Such a relationship has been established already for a number of examples, including the quintic \cite{ChiRu1} mirror quintic \cite{PS}, general Calabi-Yau hypersurfaces \cite{CIR}, and in a more general form for the classic Calabi-Yau three-fold complete intersections \cite{Cl}, and hypersurfaces of Fano and general type \cite{Ac}. There is a corresponding mirror symmetry for FJRW theory \cite{K}, known as Bergland-H\"ubsch-Krawitz (BHK) mirror symmetry, forming a square of dualities. It appears that FJRW theory is often somewhat easier to compute than the Calabi-Yau theory, and that a promising method of attack to prove Calabi-Yau mirror symmetry may be via BHK mirror symmetry and the Landau-Ginzburg duality. 

In the case of hypersurfaces of weighted projective spaces, there is a direct duality between the Calabi-Yau `phase' and the Landau-Ginzburg phase. Borcea-Voisin orbifolds are not hypersurfaces of weighted projective spaces, however, but can be given as complete intersections in quotients of products of weighted projective spaces. For more general complete intersections of toric varieties, Witten proposed an important set of physical models called \textit{Gauged Linear Sigma Models} (GLSM). A general GLSM has a far more complex and interesting phase structure, divided into several chambers, where wall-crossing can be viewed as a generalisation of the LG/CY correspondence. This has been put on a mathematical footing by Fan, Jarvis and Ruan \cite{FJRnew}. In the case of Borcea-Voisin orbifolds, this produces four different \textit{curve-counting theories}, the original Gromov-Witten and FJRW theories being two of them. This should give a potential approach to finding the quantum mirror structure of Borcea-Voisin orbifolds, via BHK mirror symmetry. This will be left to subsequent articles. Here, we focus on computations in the A-model.

Gromov-Witten theory and its companion theories all come with a state space $\mathcal{H}^{\circ} = \bigoplus_h \phi_h \mathbb{C}$, where $\circ$ stands for the GW, FJRW or mixed theories. It is endowed with an inner product analogous to the Poincar\'e pairing, and a multiplication with identity $\phi_0$. Furthermore they are each assigned a moduli space $\mathcal{M}^{\circ}$ of marked curves endowed with extra structure satisfying certain stability conditions. In Gromov-Witten theory, the curves are endowed with stable maps to $[\mathcal{X}/G],$ where $\mathcal{X} = \{W = 0\}$. In FJRW theory, they are endowed with line bundles satisfying conditions depending on the polynomial $W$ and the group $G$. In the case of interest in this paper, $\mathcal{X}$ is the complete intersection defined by polynomials $W_1, W_2$. The mixed theories come from considering the Gromov-Witten theory of one of the $W_i$ and the FJRW theory of the other, subject to compatibility conditions; the moduli spaces classify marked curves endowed with a stable map to $[\{W_i = 0\}/G]$ and line bundles subject to conditions depending on $W_j$, $j \ne i$, and that stable map. All of these moduli space come equipped with virtual classes.

In all these theories we integrate over the virtual class of the moduli space to define certain intersection numbers $$\langle \tau_{a_1}(\phi_{h_1}), \ldots, \tau_{a_n}(\phi_{h_n})\rangle_{0, n[, \beta]}^{\circ},$$ for $\tau_{a_i}(\phi_{h_i}) = \psi_i^{a_i}\phi_{h_i},$ where the $\psi$-classes are defined in the usual way as the first Chern classes of the Hodge bundle over the moduli space, itself defined fibre-wise as the space of holomorphic differentials over the base curve. The data for Gromov-Witten theory and the mixed theories include stable maps from the source curve, and it is helpful for our purposes to specify the homology class $\beta$ of the image of this stable map; this is not included for the purely FJRW invariants.

We may encapsulate the enumerative information of each theory by defining corresponding \textit{J-functions} over $\mathcal{H}^{\circ}((z^{-1}))$ by setting $J^{\circ}(\sum_ht_0^h \phi_h, z)$ to be
$$z\phi_0 + \sum_h t_0^h \phi_h + \sum_{\substack{n \ge 0 \\(h_1, \ldots, h_n)\\ [\beta \ge 0]}} \sum_{\varepsilon, k} \frac{t_0^{h_1}\ldots t_0^{h_n}}{n!z^{k+1}} \langle \tau_0(\phi_{h_1}), \ldots \tau_0(\phi_{h_n}), \tau_k(\phi_{\varepsilon})\rangle_{0, n+1[, \beta]}^{\circ}.$$ More precisely, we shall consider the \textit{ambient} or \textit{narrow} J-functions, which restrict to the ambient or narrow classes, classes induced from the ambient product of weighted projective spaces, and which have far more manageable enumerative geometry.

The original quantum mirror theorems relate the J-function of an orbifold, from the curve-counting A-side, to its \textit{I-function}, a fundamental solution of the orbifold's corresponding Picard-Fuchs equations, by means of a \textit{mirror map} $\tau(\mathbf{t})$. For $I(\mathbf{t}, z) = f(\mathbf{t})z + \mathbf{g}(\mathbf{t}) + \mathcal{O}(z^{-1}),$ the relation is given by $$J(\tau(\mathbf{t}, z)) = \frac{I(\mathbf{t}, z)}{f(\mathbf{t})},$$ where $\tau(\mathbf{t}) = \frac{\mathbf{g}(\mathbf{t})}{f(\mathbf{t})}.$ The mirror orbifold then swaps the roles of the I- and J-functions. 

In this paper, we will find the J-functions of each theory, but it will be simpler to demonstrate the correspondence in terms of I-functions. However, in this paper we do not address the Picard-Fuchs equations, so our nomenclature for the I-functions originates solely by analogy from its role in the above mirror theorem; our computations are all in terms of the A side.

For certain coefficients $K_{\mathbf{b}}, L_{\mathbf{b}}$ and functions $F_{\mathbf{b}}, G_{\mathbf{b}},$ we find the (narrow, genus zero) Gromov-Witten I-function to be
\begin{align*}
& I_{\mathrm{GW}} (\mathcal{Y}) = ze^{(2D_Et_1 + D_E(t_2+t_3) + \sum_{i=1}^4 w_iD_Kt_{4+i})/z} \times  \nonumber \\
& \sum_{\mathbf{b} \in \mathrm{Box}(\mathcal{Y})} K_{\mathbf{b}} L_{\mathbf{b}} \sum_{\substack{c \in \frac{1}{2}\mathbb{N}_0}}\sum_{\substack{(a, b, k_1, \ldots, k_m) \in \mathbb{N}_0^m\\ a \ge -c/2, b \ge -c/w_0\\v^{S}(a, b, c, k_1, \ldots, k_m) = \mathbf{b}}} (q_1^a q_2^b q_3^c \prod_{j=1}^mx_j^{k_j}) \\ 
& e^{a(2t_1 + t_2 + t_3)+b(\sum_{i=0}^3 w_it_i)+c(t_1 + t_4 + 2t_8)} F_\mathbf{b}(a, c, \mathbf{k}) G_{\mathbf{b}}(b, c, \mathbf{k}) \mathbf{1}_\mathbf{b}.
\end{align*}

For example, for $E = \{X^2 + Y^4 + Z^4 = 0\}$ in $\mathbb{P}(2, 1, 1)$, $K = \{x^2 + y^6 + z^ 6 + w^ 6 = 0\} \subseteq \mathbb{P}(3, 1, 1, 1)$, and $\sigma: (X, x) \mapsto (-X, -x)$, we find:
\begin{align*} 
& I_{\mathrm{GW}}^{[E \times K/\langle \sigma \rangle]}(\mathbf{t}, z) = ze^{(2D_Et_1 + D_E(t_2+t_3) + 3D_Kt_4 + D_K(t_5+t_6+t_7))/z} \times  \nonumber \\
& \bigg(\sum_{c \in \mathbb{N}_0} \sum_{\substack{a, b \in \mathbb{Z}\\ a\ge -c/2\\b \ge -c/3}} q_1^a q_2^b q_3^c e^{(2a+c)t_1 + a(t_2+t_3) + (3b+c)t_4 + b(t_5+t_6+t_7) + 2ct_8} \times \nonumber \\
& \frac{\Gamma(2D_E/z+1) \Gamma(D_E/z+1)^2 \Gamma(3D_K/z+1)\Gamma(D_K/z+1)^3}{\Gamma(2D_E/z+2a+c+1)\Gamma(D_E/z+a+1)^2\Gamma(3D_K/z+3b+c+1)\Gamma(D_K/z+b+1)^3}\times \nonumber \\
& \frac{\Gamma(4D_E/z+4a+2c+1)\Gamma(6D_K/z + 6b+2c + 1)}{\Gamma(2c+1)\Gamma(4D_E/z+1)\Gamma(6D_K/z+1)} \nonumber \\
+& z^{-1}\sum_{c \in \frac{1}{2}\mathbb{N}_0\backslash \mathbb{N}_0} \sum_{\substack{a, b \in \mathbb{Z}\\ a\ge -c/2\\b \ge -c/3}} q_1^a q_2^b q_3^c e^{(2a+c)t_1 + a(t_2+t_3) + (3b+c)t_4 + b(t_5+t_6+t_7) + 2ct_8} \times \nonumber \\
& \frac{\Gamma(2D_E/z+\frac{1}{2}) \Gamma(D_E/z+1)^2 \Gamma(3D_K/z+\frac{1}{2})\Gamma(D_K/z+1)^3 }{\Gamma(2D_E/z+2a+c+1)\Gamma(D_E/z+a+1)^2\Gamma(3D_K/z+3b+c+1)\Gamma(D_K/z+b+1)^3})\times \nonumber \\
 & \frac{\Gamma(4D_E/z+4a+2c+1)\Gamma(6D_K/z + 6b+2c + 1)}{\Gamma(2c+1)\Gamma(4D_E/z+1)\Gamma(6D_K/z+1)}\mathbf{1}_{\sigma}\bigg).
\end{align*}

For our cases of interest we find the I-function for the first mixed theory to be 
\begin{align*}
& I_{\mathrm{FJRW},\,\mathrm{GW}}(\mathbf{t}, z) = ze^{(w_0t_{4} + w_1t_5+w_2t_6+w_3t_7)D_K/z} e^{-z} \times \\
& \sum_{n_{3}, n_{\sigma} \in \mathbb{N}_0^3} 2 \frac{\Gamma(\frac{1}{4}+\frac{n_{3}}{2} + \frac{n_{\sigma}}{4})^2 t_3^{n_3} t_{\sigma}^{n_{\sigma}}}{\Gamma(\frac{1}{4}+\langle\frac{n_3}{2} + \frac{n_{\sigma}}{4}\rangle+1)^2n_3!n_{\sigma}!}z^{\lfloor \frac{n_{3}}{2} + \frac{n_{\sigma}}{4} \rfloor - (n_{3} + n_{\sigma})}  \sum_{\mathbf{b} \in \mathrm{Box}([K/\langle \sigma_K\rangle])}L_{\mathbf{b}}\times\\
& \sum_{\substack{(b, c, \mathbf{k}) \in \Lambda E_{\mathbf{b}}^S([K/\langle \sigma_K\rangle])\\ 2c+ \sum_{j=l+1}^mk_j = n_{\sigma}}} (q_2^b q_3^c \prod_{j=1}^mx_j^{k_j})e^{b(\sum_{i=1}^nw_it_i)+c(t_4+2t_8)}G_{\mathbf{b}}(b, c, \mathbf{k})\phi_{h_{n_3, n_{\sigma}}}\mathbf{1}_{\mathbf{b}}\\ %b \ge -c/w_0\\ v^S_{[K/\langle \sigma_K\rangle]}(b, c, \mathbf{k}) = \mathbf{b}\\
=: & \sum_{n_1, n_3, n_{\sigma}} \sum_{\mathbf{b} \in \mathrm{Box}([K/\langle \sigma_K\rangle])}  \omega_{n_3, n_{\sigma}, \mathbf{b}}^{\mathrm{FJRW},\,\mathrm{GW}} z^{\lfloor \frac{n_{3}}{2} + \frac{n_{\sigma}}{4} \rfloor - (n_{1} + n_{3} + n_{\sigma})} \phi_{n_1, n_3, n_{\sigma}} \mathbf{1_b}.
\end{align*}

For our cases of interest we find the I-function for the second mixed theory to be
\begin{align*}
& I_{\mathrm{GW},\,\mathrm{FJRW}}(\mathbf{t}, z) = ze^{(2t_{1} + t_2+t_3)D_E/z} e^{-z}\times \\
& \sum_{n_{3}, n_{\sigma} \in \mathbb{N}_0^3} 2 \frac{\Gamma(\frac{1}{6}+\frac{n_{3}}{3} + \frac{n_{\sigma}}{6})^3t_3^{n_3} t_{\sigma}^{n_{\sigma}}}{\Gamma(\frac{1}{6}+\langle\frac{n_3}{3} + \frac{n_{\sigma}}{6}\rangle+1)^3n_3!n_{\sigma}!}z^{\lfloor \frac{n_{3}}{3} + \frac{n_{\sigma}}{6} \rfloor - (n_{1} + n_{3} + n_{\sigma})} \times\\
&  \sum_{\mathbf{b} \in \mathrm{Box}([E/\langle \sigma_E\rangle])}K_{\mathbf{b}}\sum_{\substack{(a, c) \in \Lambda E_{\mathbf{b}}^S([E/\langle \sigma_E\rangle])\\ 2c = n_{\sigma}}} q_2^a q_3^ce^{a(2t_1+t_2+t_3)+c(t_1+2t_8)}F_{\mathbf{b}}(a, c)\mathbf{1}_{\mathbf{b}}\phi_{h_{n_3, n_{\sigma}}}\\ %b \ge -c/w_0\\ v^S_{[K/\langle \sigma_K\rangle]}(b, c, \mathbf{k}) = \mathbf{b}\\
=: & \sum_{n_1, n_3, n_{\sigma}} \sum_{\mathbf{b} \in \mathrm{Box}([E/\langle \sigma_E\rangle])}  \omega_{\mathbf{b}, n_3, n_{\sigma}}^{\mathrm{GW},\,\mathrm{FJRW}} z^{\lfloor \frac{n_{3}}{3} + \frac{n_{\sigma}}{6} \rfloor - (n_{3} + n_{\sigma})} \mathbf{1_b} \phi_{n_1, n_3, n_{\sigma}}. 
\end{align*}

The  FJRW I-function for $W = X^2+Y^4+Z^4+x^2 + y^6 + z^6 + w^6 = 0$ we find to be
\begin{align*}
&I^{\mathrm{nar}}_{FJRW}(\mathbf{t}, z) =  \sum_{M, N, C \ge 0} \frac{\Gamma(\frac{1}{2})\Gamma(\frac{1}{4} + \frac{M}{2} + \frac{C}{4})^2 \Gamma(\frac{1}{2})\Gamma(\frac{1}{6} + \frac{N}{3} + \frac{C}{6})^3}{\Gamma(\frac{1}{2}+1) \Gamma(\frac{1}{4} + \langle \frac{M}{2} + \frac{C}{4}\rangle + 1)^2 \Gamma(\frac{1}{2} + 1) \Gamma(\frac{1}{6} + \langle \frac{N}{3} + \frac{C}{6}\rangle + 1)^3}\\
& \times \frac{T_1^MT_2^NT_3^C}{M!N!C!}z^{1- M - N - C + 2 \lfloor \frac{M}{2} + \frac{C}{4}\rfloor + 3 \lfloor \frac{N}{3} + \frac{C}{6}\rfloor}\phi_{h({M, N, C})}\\
& =: \sum_h \sum_{(M, N, C): h(M, N, C) = h} \omega_h^{\mathrm{FJRW}}z^{1- M - N - C + 2 \lfloor \frac{M}{2} + \frac{C}{4}\rfloor + 3 \lfloor \frac{N}{3} + \frac{C}{6}\rfloor} \phi_h.
\end{align*}
where $h(M, N, C)$ is given in $(\mathbf{C}^*)^7$ by
$$(-1, e^{2\pi i (\frac{M}{2} + \frac{C}{4} + \frac{1}{4})}, e^{2\pi i (\frac{M}{2} + \frac{C}{4} + \frac{1}{4})}, -1, e^{2\pi i (\frac{N}{3} + \frac{C}{6} + \frac{1}{4})}, e^{2\pi i (\frac{N}{3} + \frac{C}{6} + \frac{1}{4})}, e^{2\pi i (\frac{N}{3} + \frac{C}{6} + \frac{1}{4})}).
$$

\subsection{Structure of this Paper and Statement of Main Result}

Borcea-Voisin orbifolds are some of the first examples of non-trivial Calabi-Yau threefolds, each given as $\widetilde{E\times K/\mathbb{Z}_2}$, for $E$ an elliptic curve, $K$ a K3 surface, and $\mathbb{Z}_2$ generated by the product of anti-symplectic involutions on both factors. We consider special cases where both factors can be given as algebraic hypersurfaces in weighted projective space, and the involution acts by negating the first coordinate of each.

In this paper we first give an overview of Gromov-Witten theory, FJRW theory and the hybrid theories as defined in \cite{FJRnew}. Each of these is encapsulated in a state space (which provides cohomological information) and an I-function (which provides enumerative information) defined from a certain moduli space. On the quotient of a variety cut out by polynomials $W_1, \ldots, W_n$ by a group $G$, Gromov-Witten theory gives invariants which count curves going through subvarieties in given homology classes, with certain corrections to allow integration over the moduli space to be well-defined. FJRW theory gives invariants counting curves endowed with line bundles of given multiplicities at certain marked points, subject to conditions depending on the $W_i$ and $G$. In our case, we have two polynomials $W_1, W_2$, and this allows us to define two intermediate theories, similar to the Gromov-Witten theory for one and the FJRW theory for the other. In this paper, we find the state spaces and I-functions for all of these theories in certain cases. We restrict to genus-zero curves, and \textit{ambient} and \textit{narrow} theories, considering only those classes induced from the cohomology of the ambient space.

We end by proving the following:
\begin{thm}

\begin{description}
\item[(1)] For Borcea-Voisin orbifolds of Fermat type with quartic elliptic curve, the state spaces of all four theories are isomorphic as graded inner product spaces.

\item[(2)] For Borcea-Voisin orbifolds of Fermat type, the narrow/ambient state spaces of all four theories are isomorphic as graded inner product spaces.

\item[(3)] For Borcea-Voisin orbifolds of Fermat type, a mirror theorem holds in the sense that the narrow I-functions found in this paper, if written in the form $I^{\circ}(\mathbf{t}, z) = f(\mathbf{t})z + \mathbf{g}(\mathbf{t}) + \mathcal{O}(\mathbf{t}^2),$ are related to the narrow J-functions of their respective theories by $$J^{\circ}(\tau(\mathbf{t}), z) = \frac{I^{\circ}(\mathbf{t}, z)}{f(\mathbf{t})} \text{, where } \mathbf{\tau}(\mathbf{t}) = \frac{\mathbf{g}(\mathbf{t})}{f(\mathbf{t})}.$$

\item[(4)] For Borcea-Voisin orbifolds of Fermat type with quartic elliptic curve, the mixed theory I-functions $I_{\mathrm{FJRW}, \mathrm{GW}} (\mathbf{t}, z)$ are related to $\mathrm{I}_{\mathrm{GW}}(\mathbf{t}, z)$ by analytic continuation and symplectic transformation.

\item[(5)] For Borcea-Voisin orbifolds of Fermat type with K3 surface $\{x^2 + y^6 + z^6 + w^6 = 0\}$, the mixed theory I-functions $I_{\mathrm{GW}, \mathrm{FJRW}} (\mathbf{t}, z)$ are related to $\mathrm{I}_{\mathrm{GW}}(\mathbf{t}, z)$ by analytic continuation and symplectic transformation.

\item[(6)] For $E = \{X^2 + Y^4 + Z^4 = 0\}$ in $\mathbb{P}(2, 1, 1)$, $K = \{x^2 + y^6 + z^ 6 + w^ 6 = 0\} \subseteq \mathbb{P}(3, 1, 1, 1)$, and $\sigma: (X, x) \mapsto (-X, -x)$, $I_{\mathrm{FJRW}}(\mathbf{t}, z)$ is related to $\mathrm{I}_{\mathrm{GW}}(\mathbf{t}, z)$ by analytic continuation and symplectic transformation.
\end{description}
\end{thm}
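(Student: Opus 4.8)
The plan is to deduce statement (6) from the two partial correspondences (4) and (5) by exploiting the product structure of the Borcea-Voisin orbifold $[E\times K/\langle\sigma\rangle]$. As the explicit formulas show, each I-function factors (before taking $\sigma$-invariants) into an $E$-factor built from the Fermat data $X^2+Y^4+Z^4$ and a $K$-factor built from $x^2+y^6+z^6+w^6$: $I_{\mathrm{GW}}$ is (GW of $E$) combined with (GW of $K$), $I_{\mathrm{FJRW}}$ is (FJRW of $E$) combined with (FJRW of $K$), and the two mixed functions interpolate, replacing one factor's GW data by its FJRW data. The $E$- and $K$-variables are disjoint (the $q_1$-, $T_1$-dependence is independent of the $q_2$-, $T_2$-dependence), so I would perform the Landau-Ginzburg/Calabi-Yau analytic continuation separately in each factor and then compose. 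Concretely, I first continue the $E$-factor of $I_{\mathrm{FJRW}}$ into the $E$-factor of $I_{\mathrm{GW},\,\mathrm{FJRW}}$ --- this is exactly the continuation underlying (4) --- and then continue the $K$-factor into that of $I_{\mathrm{GW}}$ via (5). Because these act on disjoint sets of variables and state-space factors they commute, and a composition of symplectic transformations is symplectic.

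The engine in each factor is a Mellin-Barnes/hypergeometric argument. The summand of each Fermat factor is a ratio of Gamma functions, hence a ratio of Pochhammer symbols, so the $E$-factor of $I_{\mathrm{GW}}$ is (up to the exponential prefactor) a generalized hypergeometric series in $q_1$ and the $E$-factor of $I_{\mathrm{FJRW}}$ a generalized hypergeometric series in $T_1$. I would write each as a single component of a vector-valued function presented by a Barnes integral $\frac{1}{2\pi i}\int_C \frac{\prod_j\Gamma(\cdots+s)}{\prod_k\Gamma(\cdots+s)}\,x^{-s}\,ds$ over a suitable state-space basis; closing the contour to one side collects the residues producing the large-volume (GW) series, and closing it to the other side produces the Landau-Ginzburg (FJRW) series. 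Standard growth estimates on the integrand justify the contour deformation, so the two series are genuine analytic continuations of one another across the discriminant of the GLSM moduli curve, with the parameter identification $q_1\leftrightarrow T_1^{-1}$ (up to a normalizing root of unity) dictated by matching exponents. The phases produced by the reflection formula $\Gamma(s)\Gamma(1-s)=\pi/\sin(\pi s)$ are precisely the entries of the resulting linear transformation on $\mathcal{H}((z^{-1}))$.

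It then remains to check that this transformation is symplectic rather than merely linear. Here I would invoke Givental's formalism: any analytic continuation of a point on the overruled Lagrangian cone of one theory lands, after a linear change of basis, on the cone of the other, and the comparison map preserves the symplectic pairing $\Omega(f,g)=\operatorname{Res}_{z}\big(f(-z),g(z)\big)$ precisely when it respects the graded Poincar\'e-type pairings of the two state spaces. Those pairings are identified by parts (1) and (2) of the theorem, which give a grading- and inner-product-preserving isomorphism matching the narrow FJRW sectors $\phi_{h(M,N,C)}$ with the (possibly $\sigma$-twisted) GW sectors $\mathbf{1}_{\mathbf{b}}$. Reconciling the differing powers of $z$ --- the floor-function shifts $2\lfloor M/2+C/4\rfloor+3\lfloor N/3+C/6\rfloor$ against the GW exponents --- amounts to absorbing an overall $z$-rescaling compatible with the age/degree shift recorded in that isomorphism.

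The main obstacle is the $\sigma$-twisting, which couples the two factors and so is invisible in the naive product picture. The $\mathbb{Z}_2$-quotient splits $I_{\mathrm{GW}}$ into an untwisted piece (integer $c$) and a $\mathbf{1}_{\sigma}$-twisted piece (half-integer $c$) carrying a compensating power of $z$ and half-integer Gamma arguments such as $\Gamma(2D_E/z+\tfrac12)$, while on the FJRW side the corresponding states are indexed by $h(M,N,C)$ through the common parameter $C$. The delicate step is to show that the two factorwise contour deformations, performed simultaneously, send the residues with $2c+\sum_j k_j$ even or odd exactly to the $\sigma$-fixed and $\sigma$-twisted sectors of the target, so that the assembled transformation respects the $\mathbb{Z}_2$-invariant decomposition and the half-integer shifts glue correctly. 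Verifying this compatibility --- equivalently, that the phases from the duplication formula for $\Gamma$ reorganize the twisted summands into exactly the basis furnished by (1)--(2) --- is where the real work lies; once it is in place, symplecticity follows from the inner-product isomorphism and the proof is complete.
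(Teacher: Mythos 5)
Your overall strategy matches the paper's own: the paper proves (6) by performing the Mellin--Barnes continuation of $I_{\mathrm{GW}}$ in both variables $\tilde{q}_1$ and $\tilde{q}_2$ ``as in the previous two theorems'' --- i.e.\ precisely the factorwise continuations underlying (4) and (5), which are independent because each term's dependence on $a$ and on $b$ sits in separate Gamma factors, coupled only through the half-integer index $c$ --- and then assembles the linear map $\mathbb{U}$ blockwise out of $\mathbb{U}_E$ and $\mathbb{U}_K$, with the half-integer-$c$ residues landing in the $\sigma$-twisted sectors. Your identification of the $\sigma$-coupling as the delicate point is also the paper's.

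There is, however, one genuine gap: your claim that once the bases are matched, ``symplecticity follows from the inner-product isomorphism'' of (1)--(2), backed by an appeal to Givental's formalism. No such implication holds. The entries of $\mathbb{U}$ are $z$-dependent (e.g.\ the $1\times 1$ block $-ie^{-z}$ on $\phi_{\sigma_E}\mathbf{1}_{\sigma g}$, and the $2\times 2$ and $3\times 3$ blocks whose entries involve $\gamma$, $\ln 2$ and powers of $\pi$ coming from expanding $K_{\mathbf{b}}$, $L_{\mathbf{b}}$ and $E_i(\mu,\tilde{\sigma})$ in $D_E/z$ and $D_K/z$), and a linear map preserving the graded state-space pairing need not preserve $\Omega(f,g)=\mathrm{Res}_{z=0}\langle f(-z),g(z)\rangle$; indeed, the assertion that an analytic continuation of one theory's I-function is carried to the other's by a \emph{symplectic} transformation is exactly the content being proven, not a general principle available to quote. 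The paper closes this step by explicit computation: it Taylor-expands the Gamma factors to linear order in $D_E/z$ (sufficient since $D_E^2=0$) and to quadratic order in $D_K/z$, then checks block by block --- using identities such as $\Gamma(\tfrac34)=\sqrt{2}\pi/\Gamma(\tfrac14)$ and $f(z)f^*(-z)=1$ for $f(z)=-ie^{-z}$ --- that each block is symplectic and degree-preserving. Your proposal needs this verification (or a substitute) to be complete. A smaller point: the intermediate continuation in your chain, $I_{\mathrm{FJRW}}\to I_{\mathrm{GW},\,\mathrm{FJRW}}$, is not literally an instance of (4) or (5), whose sources and targets are different theories; it requires rerunning the same termwise argument, which is what the paper in effect does by continuing $I_{\mathrm{GW}}$ in both variables simultaneously and comparing directly with $I_{\mathrm{FJRW}}$.
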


\subsection{Acknowledgements}

I am grateful to Prof Yongbin Ruan for his incomparable help throughout, to Thomas Coates for his generosity in advice and fruitful discussion, to Nathan Priddis for his taking time to explain certain key concepts to me, and to Pedro Acosta and Emily Clader for helpful discussions.

\section{Preliminaries}
\subsection{Borcea-Voisin Orbifolds}

Any elliptic curve $E$ is endowed with an involution $\sigma_E$ whose induced map on $H^2(E)$ is $-\mathrm{id}$, most simply given as that induced by the map $z \mapsto -z$ in $\C$, if $E$ is considered as the quotient of $\C$ by a lattice.

Similarly, several K3 surfaces $K$ are also endowed with involutions $\sigma_K$ such that the induced map on $H^2(K)$ is also $-\mathrm{id}$. These `anti-symplectic involutions' were explored and mostly classified by Nikulin in \cite{N}.
The fixpoint sets of such involutions are unions of curves, which are either empty, have at least one of genus more than one, or are the union of exactly two curves of genus one. \cite{V}

$E \times K$ is Calabi-Yau, as the product of two Calabi-Yau manifolds. It has an involution $\sigma := \sigma_E \times \sigma_K$, whose induced map on cohomology is now the identity. The quotient $[E\times K/\langle\sigma\rangle]$, in general, has singularities (unless the fixpoint set of $\sigma_K$ was empty, in which case we have the Rodrigues surface). We may resolve these canonically, and the corresponding manifold $\widetilde{E\times K}/\mathbb{Z}_2$, known as a \textit{Borcea-Voisin manifold}, is also Calabi-Yau. To avoid considering the resolution of singularities separately, we treat the quotient itself as a \textit{Borcea-Voisin orbifold} $\mathcal{Y} = [E\times K]/\mathbb{Z}_2$,  the main objects of study of this paper.

\subsection{The Chen-Ruan cohomology of Borcea-Voisin Orbifolds}

 $H_{CR}^*(\mathcal{Y})$ decomposes into two parts: first, there is a part coming from $\sigma$-invariant classes in $H^*(E \times K)$, which in turn decomposes into $(H^+(E) \otimes H^+(K)) \oplus (H^-(E) \otimes H^-(K))$, where $H^\pm$ denotes the eigenspace of $\sigma_E$ or $\sigma_K$ respectively with eigenvalue $\pm1$. Let the fixed point set of $\sigma_K$ be $\Sigma = \coprod_{i=1}^N C_i$, where $C_i$ is connected and has genus $g_i$. Let $N' = \sum g_i$. We compute $\chi(K/\sigma_K) = 12 + N - N'$. From this, and degree considerations, we have
 
 $$H^+(E) = 
 \begin{tabular}{llll}
  &  & 1 & \\
  & 0 & & 0 \\
  & & 1 & 
 \end{tabular}, \quad H^-(E) = \begin{tabular}{llll}
    & & 0 & \\
   & 1 & & 1 \\
   & & 0 & 
 \end{tabular}$$
 
 $$H^+(K) = 
 \begin{tabular}{llllll}
  &  &  & 1 & & \\
  & & 0 & & 0 & \\
  & 0 &  & $a$ &  & 0 \\
  & & 0 & & 0 & \\
  &  &  & 1 & & 
 \end{tabular}, \quad H^-(K) = \begin{tabular}{llllll}
  &  &  & 0 & & \\
  & & 0 & & 0 & \\
  & 1 &  & $b$ &  & 1 \\
  & & 0 & & 0 & \\
  &  &  & 0 & & 
 \end{tabular}$$

where $a = 10+N-N', b = 10-N+N'$. The total invariant part of the cohomology is then $$\begin{tabular}{llllllll}
    & & & 1 & & & \\
   & & 0 & & 0 & & \\
   & 0 &  & $a+1$ &  & 0 & \\
   1 & & $b+1$ & & $b+1$ & & 1 \\
   & 0 &  & $a+1$ &  & 0 & \\
   & & 0 & & 0 & & \\
   & & & 1 & & & \\
 \end{tabular}$$

Second, there is a part coming from the twisted orbifold sectors, or classically from the fixed point locus of the involution. In the Chen-Ruan formalism, this is given by the cohomology of the fixed point sets of the conjugacy classes of the group with the index `twisted' by a number called the age. In our case, we have only one non-trivial conjugacy class $\{\sigma\}$, and $\mathrm{Fix}(\sigma) = 4\coprod_{i=1}^N C_i$. The normal bundle of $\Sigma$ has rank 2, on which the involution acts with eigenvalue $e^{\frac{1}{2}(2\pi i)}.$ We therefore include the cohomology of $\Sigma$:
$$\begin{tabular}{llll}
  &  & $4N$ & \\
  & $4N'$ & & $4N'$ \\
  & & $4N$ & 
 \end{tabular}
 $$
 but shift the degree by 1, ceding
 \begin{equation}\label{eq:coh} H_{CR}^*(\mathcal{Y}) = \begin{tabular}{llllllll}
    & & & 1 & & & \\
   & & 0 & & 0 & & \\
   & 0 &  & $h_{1,1}$ &  & 0 & \\
   1 & & $h_{2, 1}$ & & $h_{2, 1}$ & & 0 \\
   & 0 &  & $h_{1,1}$ &  & 0 & \\
   & & 0 & & 0 & & \\
   & & & 1 & & & \\
 \end{tabular},\end{equation}
 where $h_{1,1} = 11+5N-N'$, $h_{2, 1} = 11+5N'-N$. It turns out that for every Nikulin involution of a K3 surface whose fixpoint set has $N$ components whose genera sum to $N'$, there is another with $N'$ components whose genera sum to $N$, \cite{V} These therefore correspond to mirror Borcea-Voisin orbifolds in the Hodge diamond sense, with $N = 0, N' = 0$ and $N = 2, N' = 2$ corresponding to self-mirror orbifolds.
 
 If $E$ and $K$ can be given by equations inside weighted projective spaces $\mathbb{P}(\mathbf{w}_E), \mathbb{P}(\mathbf{w}_K)$, we define the \textit{ambient space} to be $\mathcal{X} = [(\mathbf{P}(\mathbf{w}_E)\times \mathbf{P}(\mathbf{w}_K)/\tilde{\sigma}]$, where $\tilde{\sigma}$ lifts $\sigma$. We define the \textit{ambient cohomology}  $H^{\mathrm{amb}}(\mathcal{Y})   = i*(H_{\mathrm{CR}}^*(\mathcal{X})) \subseteq H_{\mathrm{CR}}^*(\mathcal{Y})$, induced by the inclusion $i:\mathcal{Y} \hookrightarrow \mathcal{X}$. From basic weighted projective geometry this is always of the form  $$H_{CR}^*(\mathcal{X}) = \begin{tabular}{llllllll}
    & & & 1 & & & \\
   & & 0 & & 0 & & \\
   & 0 &  & $h^{\mathrm{amb}}_{1, 1}$ &  & 0 & \\
   0 & & 0 & & 0 & & 0 \\
   & 0 &  & $h^{\mathrm{amb}}_{1, 1}$ &  & 0 & \\
   & & 0 & & 0 & & \\
   & & & 1 & & & \\
 \end{tabular},$$
 
for $h^{\mathrm{amb}}_{1, 1} \le h_{1, 1}$.
 
 \subsection{Gromov-Witten theory}
 
 A marked (possibly orbifold) curve $(C, p_1, p_2, \ldots, p_n)$ is \textit{stable} if $C$ is connected, compact, at worst nodal, no marked point is a node, and has finitely many automorphisms which fix the marked points. This last condition is equivalent to every genus 0 irreducible component containing at least three marked points and every genus 1 component having at least one marked point.
 
Given an orbifold $\mathcal{Y}$, we call a map $f:(C, p_1, p_2, \ldots, p_n) \to \mathcal{Y}$ \textit{stable} if and only if every component of every fibre is stable; thus, the map can only be constant on an irreducible component of $C$ if that component is stable.

There is a well-defined projective moduli stack $\overline{\mathfrak{M}}_{g, n}(\mathcal{Y}, \beta)$ of stable maps to $\mathcal{Y}$, where the source curves are of genus $g$ with $n$ marked points, and the image of the maps lie in the class $\beta \in H_2(\mathcal{Y})$. \cite{CheRu1} There are subtleties about integrating over this moduli space. Instead of the fundamental class we generally integrate over a specified \textit{virtual fundamental class} $[\overline{\mathfrak{M}}_{g, n}(\mathcal{Y}, \beta)]^{\mathrm{vir}}$ whose definition may be found in \cite{FP}.

An orbifold comes with an \textit{inertial manifold} $I\mathcal{Y} = \coprod_{(g)} Fix(g)$ which serves to decompose the orbifold into parts corresponding to different group conjugacy classes at the orbifold points. This moduli space is endowed with evaluation maps $\mathrm{ev}_i: \overline{\mathfrak{M}}_{g, n}(\mathcal{Y}, \beta) \to I\mathcal{Y}$,  given in the manifold case by $\mathrm{ev}_i:f \mapsto f(p_i)$. In the orbifold case, the inertial orbifold allows us to keep track of which twisted sector the evaluation map sends a marked point to.
There is a well-defined smooth orbibundle $\mathbb{L}_i \to \overline{\mathfrak{M}}_{g, n}(\mathcal{Y}, \beta)$  whose fibre at each curve $f$ in the moduli space is the cotangent space at $f(p_i)$. (At orbifold points this differs from the corresponding tangent space for the underlying space by a factor of the multiplicity of the point.) We define $\psi_i = c_1(\mathbb{L}_i)$. Then the \textit{descendant Gromov-Witten invariants} are given by 
$$\langle \tau_{a_1}(\gamma_1),\ldots, \tau_{a_n}(\gamma_n) \rangle_{g, n}^{\beta}  = \int_{[\overline{\mathfrak{M}}_{g, n}(\mathcal{Y}, \beta)]^{\mathrm{vir}}} \prod_{i=1}^n \mathrm{ev}_i^*(\gamma_i))\psi^{a_i}. $$

All these invariants may be packaged into  $$ \langle \mathbf{t}, \ldots, \mathbf{t}\rangle_{g, n}^{\beta} = \sum_{k_1, \ldots, k_n \ge 0} \langle \tau_{k_1}(t_{k_1}), \ldots, \tau_{k_n}(t_{k_n})\rangle,$$ for $\sum_{i\ge 0} t_iz^i \in H_{\mathrm{CR}}^*(\mathcal{Y})[[z^{-1}]]$. This is the generating function for the genus-$g$ \textit{descendant potential}

$$\mathcal{F}_{\mathcal{Y}}^g (\mathbf{t}) = \sum_{n \ge 0} \sum_{\beta \in H_{\mathrm{CR}}^2(\mathcal{Y})} \frac{q^d}{n!} \langle \mathbf{t}, \mathbf{t}, \ldots \mathbf{t}\rangle_{g, n}^{\beta},$$ which takes values over the Novikov ring $\mathbb{C}[[ H_2(\mathcal{Y})]]$ (or, in our case, the subring involving only effective classes $\mathbb{C}[[ H_2(\mathcal{Y}) \cap NE(\mathcal{Y})]]$.)

As in \cite{CheRu1}, we generally specify a degree-two generating set $\{ \phi_{\alpha}\}$ of $ H_{\mathrm{CR}}^*(\mathcal{Y})$, and write $\mathbf{t} = \sum_{k\ge0}\sum_{\alpha} t_k^{\alpha} \phi_{\alpha} z^k$. 
 
 \subsection{FJRW Theory}

This subsection closely follows \cite{FJR}. A Landau-Ginzburg model is given by  quasi-homogeneous polynomial function $W:\mathbb{C}[x_1, x_2, \ldots x_n] \to \mathbb{C}$ with weights $\bar{w}_1, \bar{w}_2, \ldots, \bar{w}_N$ and degree $d$, and a group $G$ which leaves $W$ invariant.  We denote by $q_i$, $1 \le i \le n$ the \textit{charges} $\frac{\bar{w}_i}{d}$. The hypersurface $X_W$ is Calabi-Yau if and only if $\sum_i q_i = 1$. We shall exclusively work in the \textit{Fermat} case, in which $W = \sum_{i=1}^N x^{a_i}$, where $a_i = \frac{d}{\bar{w}_i} = \frac{1}{q_i}$.

The \textit{maximal group} $G_{\mathrm{max}}$ of all diagonal symmetries leaving $W$ invariant is given by $$\{(e^{\frac{2 \pi i r_1}{a_1}}, \ldots, e^{\frac{2 \pi i r_N}{a_N}})\vert 1 \le r_i a_i\}.$$ There is a special group element $J := (e^{\frac{2 \pi i}{a_1}}, \ldots, \frac{2 \pi i} {a_N})$.

Then $(W, \langle J \rangle)$ corresponds to the hypersurface $X_W = \{ W = 0\} \subseteq (\mathbb{C}^N\backslash \{0\})/\langle J \rangle = \mathbb{P}(w_1, \ldots, w_N)$.

More generally, a well-defined FJRW theory may be given by any group such $G$ such that $\langle J \rangle \subseteq G \subseteq G_{\mathrm{max}}$
(by a result of Krawitz, \cite{K}, these correspond to the \textit{admissible} groups of \cite{FJR}), and this morally corresponds to the orbifold $[X_W/(G/\langle J \rangle)]$). 

It is easy to check that if $W_1, W_2$ share no variables in common, then $X_{W_1} \times X_{W_2}$ corresponds to $(W_1+W_2, \langle J_1, J_2\rangle)$. In our case, we will consider a Landau-Ginzburg theory of the form $(W_1+W_2, \langle J_1, J_2, \sigma \rangle)$.

For $h \in G$, let $N_h$ be the dimension of the fixpoint subspace $\mathrm{Fix}(h) \subseteq \mathbb{C}^N$, let $W_h = W\vert_{\mathrm{Fix}(g)}$, and let $W_h^{+\infty} = (\mathrm{Re} W_h)^{-1}(]\rho, +\infty[)$ for $\rho >> 0$. 

The state space is given by $$\mathcal{H}_{\mathrm{FJRW}}(W, G) = \bigoplus_{h \in G} \mathcal{H}_{h},$$ where $$\mathcal{H}_h = H^{N_h}(\mathrm{Fix}(h), W_h^{\infty}; \mathbb{C})^G.$$

The sectors corresponding to $h$ for which $\mathrm{Fix}(h) = \{ 0 \}$ are termed \textit{narrow} sectors, and we denote their union $\mathcal{H}_{\mathrm{nar}}$. These will be seen to correspond to the ambient classes in Gromov-Witten theory. All other sectors are termed $\textit{broad}$.

Analogously to Chen-Ruan cohomology, we define the \textit{age} of the action of $g$ with eigenvalues $e^{2\pi i \Theta_k(h)}$ to be $\sum_k \Theta_k(h)$, and the Hodge bidegree of $H_g$ is then shifted by $(\mathrm{age}(g) - 1, \mathrm{age}(g) -1)$, giving total degree $\mathrm{deg}_W (\alpha) = N_h + 2(\mathrm{age}(g) - 1)$.

Analogously to Chen-Ruan cohomology, we define the \textit{age} of the action of $g$ with eigenvalues $e^{2\pi i \Theta_k(h)}$ to be $\sum_k \Theta_k(h)$, and the Hodge bidegree of $H_g$ is then shifted by $(\mathrm{age}(g) - 1, \mathrm{age}(g) -1)$, giving total degree $\mathrm{deg}_W (\alpha) = N_h + 2(\mathrm{age}(g) - 1)$.

An alternative construction can be given by the theory of the Milnor ring $$\mathcal{Q}_W := \mathbb{C}[x_1, x_2, \ldots, x_N]\mathcal{J}_W$$ where $\mathcal{J}_W$ is the Jacobian ideal $$(\frac{\partial W}{\partial x_1}, \frac{\partial W}{\partial x_2}, \ldots \frac{\partial W}{\partial x_N}).$$ This is a complex vector space of dimension $$\mu = \prod_{i=1}^N (\frac{1}{q_i} - 1),$$ where it is a straightforward exercise to show that the highest degree attained is that of $\mathrm{Hess}(W)$, equal to $\sum_{i=1}^N (1-2q_j)$, which we label the \textit{central charge}, denoted by $\hat{c}$.

Then we can alternatively define the FJRW state space sector-wise by $$\mathcal{H}_g  = \Omega^{N_g}(\mathrm{Fix}(g))/(\mathrm{d},W \vert_{\mathrm{Fix}(g)}\wedge \Omega^{N_g-1}) \cong \mathcal{Q}_{W\vert _{\mathrm{Fix(g)}}}\cdot \mathrm{d}\,x_1 \wedge \ldots \wedge \mathrm{d}\,x_N.$$

Let $\phi_g$ be the fundamental class in $\mathcal{H}_g$, and write $\phi^g = \phi_{g^{-1}}$.

There is a natural pairing $\eta(\cdot, \cdot)$ given in the context of the Milnor ring, given by $$fg = \frac{\langle f, g\rangle}{\mu} \mathrm{Hess}(W) + \mathrm{lower\,terms}.$$

FJRW theory associates a moduli stack $\mathcal{W}_{g, n}^{(W, G)}$ of curves endowed with line bundles and some further structure to each Landau-Ginzburg model $(W, G)$. It was originally defined to give an appropriate context for the solution of the \textit{Witten equation} defined in \cite{Wit} $$\overline{\partial} u_i + \frac{\overline{\partial W}}{\partial u_i},$$ where $W$ is a quasi-homogeneous polynomial and and $u_i$ is a section of a line bundle over some complex curve $C$. a full treatment can be found in \cite{FJR}.

Instead of considering maps from the curves to an ambient space, we consider specified sets of line bundles which each loosely correspond to coordinates. %Let $W = \sum_i \prod_j x_{j}^{a_{ij}}$, where we assume the matrix $(a_{ij})$ is in Smith normal form.[] Then 

A \textit{$d$-stable $W$-spin orbicurve} is a marked orbicurve $(C, p_1, \ldots, p_n)$ with at worst nodal singularities endowed with line bundles $\mathcal{L}_1, \mathcal{L}_2, \ldots, \mathcal{L}_N$ such that for each monomial term $W_i = \prod_{j=1}^N x_j^{a_{ij}}$ of $W$ we have $$\bigotimes_{j=1}^N\mathcal{L}_j^{\otimes a_{ij}} \cong \omega_{\mathrm{log}} := \omega_C \otimes (\sum_{i=1}^n p_i),$$ and isomorphisms $$\phi_i: \mathcal{L}_i^{\otimes d} \to \omega_{\log}.$$

There is a proper Deligne-Mumford stack $\mathcal{W}_{g, n}$ of $n$-marked $W$-spin curves of genus $g$. \cite{FJR}

We represent $h \in G_{\mathrm{max}} $ by $(e^{2 \pi i\Theta_1(h)}, \ldots, e^{2 \pi i \Theta_N(h)})$. Then the moduli space decomposes into $\coprod_{\mathbf{h} \in G^n} W_{g, n}(\mathbf{h})$, where $\Theta_k(h_i) = \mathrm{mult}_{p_i}(\mathcal{L})/d$. For a curve in $\mathcal{W}_{g, n}(\mathbf{h})$, the corresponding line bundle over the coarse space $\vert \mathcal{L}_k \vert$ has degree $$q_k (2g-2+n) - \sum_{i=1}^n \Theta_k(h_i),$$ which must be an integer - this must be true for all $k$ for $\mathcal{W}_{g, n}(\mathbf{h})$ to be non-empty.

For admissible \cite{FJR, K} $G \subseteq G_{\mathrm{max}}$, there is some quasi-homogeneous polynomial $Z$ such that $G_{\max}(W+Z) = G$. Then $\mathcal{W}_{g, n, G} \subseteq \mathcal{W}_{g, n}$ is the (proper \cite{FJR}) substack of $(W+Z)$-orbicurves.

There is a virtual cycle $[W_{g,n,G}]^{\mathrm{vir}}$ of degree $2(\hat{c}-3)(1-g) + n - \sum_{i=1}^nn \iota (h_i)$, and there are $\psi$-classes defined similarly to the Gromov-Witten case. \cite{FJR} 

The FJRW invariants are given by $$\langle \psi^{a_1} \phi_{h_1}, \psi^{a_2}\phi_{h_2}, \ldots, \psi^{a_n}\phi_{h_2}\rangle_{g, n}^{(W, G)} = (\prod_k q_k )^{g-1} \int_{[\mathcal{W}_{g, n, G}(h_1, \ldots, h_n) ]^{\mathrm{vir}}} \prod_{i=1}^n \psi_i^{a_i}.$$ This is defined via the virtual class of the moduli space, which is given by $-{R\pi_*(\bigoplus_{k=1}^N \mathcal{L}_k)}^{\vee}$.

%which classifies \textit{$d$-stable W-spin curves}, that is curves $C$ equipped with $d$- $L_1, \ldots, L_N$ satisfying, and which are $d$-stable stabilisers.

We also have a product structure $\cup$ given by $$\eta_(\alpha \cup \beta, \gamma) = \langle \alpha, \beta, \gamma \rangle.$$

For several reasons it is less complicated to consider only the FJRW theory of the narrow sectors, and this paper will only compute the FJRW invariants involving $\phi_h \in \mathcal{H}_{\mathrm{nar}}$. However, it will also help to be able to use the whole group $G$. We define the \textit{extended narrow state space} $$\mathcal{H}^{\mathrm{ext}} := \mathcal{H}^{\mathrm{nar}} \oplus \bigoplus_{h \in G\backslash \mathcal{N}} \mathbb{C}\phi_h,$$ where $\mathcal{N}$ is the set of $h \in G$ giving narrow sectors.

%**GW twisting
Finally, we define our twisted invariants, corresponding to an integral is not over the whole ambient space, but rather the sub-variety of interest. This is also given in \cite{FJR}, based on the formalism given for Gromov-Witten invariants in \cite{Giv}.

%Milnor ring

%Moduli space  W-spin curves Witten equation
%virtual class

%Properties

%Transpose of W, G
%BHK mirror symmetry

\subsection{The Fan-Jarvis-Ruan GLSM}

This subsection is a very quick overview of the required vocabulary of the theory of the Gauged Linear Sigma Model defined in \cite{FJRnew}. Let $G$ be a reductive group in $\mathrm{GL}(V)$, for $V$ some vector space. Let $\theta \in \hat{G}$ be some character of $G$, and let $L_{\theta}$ be the induced line bundle over $V$, given by $V \times \mathbb{C}$ with $G$ acting by $g: (\mathbf{v}, \tilde{z}) \mapsto (g\cdot \mathbf{v}, \theta(g)\tilde{z}).$  A point $\mathbf{v}$ of $V$ is \textit{semistable} under the action of $G$ and a choice of $\theta$ if there is some positive integer $m$ and some $G$-invariant section $f \in H^0(V, L^{\otimes m}; \mathbb{C})^G$ for which $f(\mathbf{v}) \ne 0$. We let $V^{ss}(\theta)$ be the set of $(G, \theta)$-semistable points. A semistable point is \textit{stable} if it has finite stabiliser and closed $G$-orbit, and we label the set of stable points $V^s(\theta)$. In general, this removes a zero-standard-measure set of `bad points' which cause the quotient $[V/G]$ to be non-separated.  In our cases of interest, $V^s(\theta) = V^{ss}(\theta)$. We define the GIT quotient $[V//_{\theta}G]$ to be $[V^{ss}(\theta)/G]$. This plays the role of the `ambient space' of the theory.

This theory does not solely consider the action of $G$, but rather the action of an extension of $G$ which generalises the weights. Let $\mathbb{C}^*_R$ be $\mathbb{C}^*$ acting on $V$ by $$\lambda:(v_1, \ldots, v_n) \mapsto (\lambda^{c_1}v_1, \ldots, \lambda^{c_n}v_n)$$. The $c_i$ are not necessarily positive. If the action of $\mathbb{C}^*_R$ commutes with the action of $G$ and $G \cap \mathbb{C}^*_R = \langle J \rangle,$ where $J = (e^{2\pi i c_1/d}, \ldots, e^{2\pi i c_n/d}).$ Let $\Gamma = $. Let $\vartheta$ be a lift of $\theta$ from $G$ to $\Gamma$. It is cleart that $V^{ss}(\vartheta) \subset V^{ss}(\theta).$ We equality holds, we call such a lift $\vartheta$ \textit{good}. In our case, we shall only be considering the trivial lift.

For the last piece of input data, fix a non-degenerate $G$-invariant polynomial $W$ defined on $V$ of degree $d$ and charges $q_i = c_i/d$. This shall be the \textit{superpotential} of the theory. The image of the critical set $\mathcal{CR}$ of $W$ in the GIT-quotient, $[\mathcal{CR}//_{\theta}G]$, is of special interest. If $W$ is non-degenerate, then $\mathcal{CR}$ is compact.

The theory is endowed with a state space directly generalising that of FJRW theory. When $G$ is abelian, we define $\mathcal{H}_{\mathrm{GLSM}} = \bigoplus_{\alpha \in \mathbb{Q}} H_{\mathrm{CR}}^{\alpha+2q}([V//_{\theta} G], W^{\infty}; \mathbb{C}),$ where $W^{\infty} = (\mathrm{Re}W)^{-1}([0, \rho])$ for $\rho >>0$, and $q = \sum_i q_i$. This plays the role of the ambient state space.

If an element $g \in G$ has compact inertia stack component (that is, fixpoint set) then it is \textit{narrow}. $\mathcal{H}_{\mathrm{nar}}  = \bigoplus_{g \text{, narrow}} \mathcal{H}_{\mathrm{CR}}^{(g)}.$ If an element of $\mathcal{H}_{\mathrm{GLSM}}$ is Poincar\'e-dual to a substack of $\mathcal{CR},$ then it is \textit{critical}. Let $\mathcal{H}_{\mathrm{GLSM}, \mathrm{comp}}$ be the span of the narrow and critical elements. This will encompass the narrow and ambient sectors in FJRW and GW theory, respectively.

The moduli space of the theory classifies \textit{stable Landau-Ginzburg quasimaps}. These are given by tuples $$(\mathcal{C}, y_1, \ldots, y_n, \mathcal{P}, \sigma, \kappa),$$ such that $(\mathcal{C}, y_1, y_2, \ldots, y_n)$ is an $n$-pointed orbicurve, $\mathcal{P}:\mathcal{C} \to B\Gamma$ is a representable principal $\Gamma$-bundle, $\sigma:\mathcal{C} \to \mathcal{E} = \mathcal{P}\times_{\Gamma} V$ is a global section, and $\kappa:\zeta^*\mathcal{P} \to \omega{\mathrm{log}, \mathcal{C}}^{\circ}$ is an isomorphism of principal $\mathbb{C}^*$-bundles, where in turn $\zeta:\Gamma \to \mathbb{C}^*_{\mathrm{R}}$ is the group homomorphism sending $G$ to 1 and $(\lambda^{c_1}, \ldots, \lambda^{c_n})$ to $\lambda^d$, and $\omega{\mathrm{log}, \mathcal{C}}^{\circ}$ is the induced principal $\mathbb{C}^*$-bundle associated to $\omega_{\mathrm{log}, \mathcal{C}}.$ They moreover satisfy certain technical stability conditions detailed in \cite{FJRnew} which depend on the lift $\vartheta$ and a rational number $\epsilon$, which restricts the behaviour of $\sigma$ on certain points of $mathcal{C}$. In our case, we shall also require that $\sigma$ induces a map $[\sigma]: \mathcal{P} \to V$ with image in $\mathcal{CR}(W)$. They define the notion of the \textit{degree} of a quasi-map, which coincides with the degree of the image in the Gromov-Witten case. We let $\mathrm{LGQ}_{g, n}^{\epsilon, \vartheta}(\mathcal{[\mathcal{CR}^{ss}/G]}, \beta)$ be the moduli space of such stable $n$-pointed genus-$g$ LG quasimaps for given $\vartheta, \epsilon$ and degree $\beta$.

By the main theorems of \cite{FJR}, This moduli space is a Deligne-Mumford stack.

This moduli space has a well-defined virtual class, $\psi$-classes, and evaluation maps $ev_i:\mathrm{LGQ}_{g, n}^{\epsilon, \vartheta}(\mathcal{[\mathcal{CR}^{ss}/G]}) \to \mathbb{I}[\mathcal{CR}//_{\theta}G]$ which allow us to define invariants 
$$\langle \tau_{l_1}(\alpha_1), \ldots, \tau_{l_n}(\alpha_n) \rangle = \int_{[\mathrm{LGQ}_{g, n}^{\epsilon, \vartheta}(\mathcal{[\mathcal{CR}^{ss}/G]}, \beta)]}^{\mathrm{vir}} \prod_{i=1}^n ev_i^*(\alpha_i) \psi_i^{l_i}.$$

In our case of interest, the possible weights for the character $\theta$ will divide the GLSM into four chambers, one of which will correspond to  Gromov-Witten theory, one to FJRW theory, and the other two to certain `mixed' theories.
 
\subsection{Givental's Formalism for Mirror Symmetry}
 
The definitions of Gromov-Witten theory, FJRW theory and the mixed theories have already been analogous in several ways. For this section, the treatment is identical and follows for all \textit{generalised} Gromov-Witten theories (see \cite{CPS}). Let $^{\circ}$ stand for any GW, FJRW or mixed theory under discussion. So far, each have a state space endowed with an inner product and invariants which can be compiled into  genus-$g$ generating functions $\mathcal{F}_g^{\circ} = \sum_{n\ge 0} \frac{1}{n!}\langle \mathbf{t}(\psi), \ldots, \mathbf{t}(\psi)\rangle.$ Let the \textit{total genus descendant potential} be $$\mathcal{D}^{\circ} :=e^{\sum_{g\ge0} \hbar^{g-1} \mathcal{F}_g^{\circ}}.$$

Define the symplectic vector space $\mathcal{V}^{\circ}$ to be $\mathcal{H}^{\circ}((z^{-1}))$ endowed with the symplectic form $$\Omega^{\circ} (f, g) = \mathrm{Res}_{z = 0} \langle f(-z), g(z) \rangle^{\circ},$$ defined via the induced inner product. We choose a polarisation $\mathcal{V}_+^{\circ} = H^{\circ}[z], \mathcal{V}_-^{\circ} = z^{-1}H^{\circ}[[z^{-1}]].$ This gives us Darboux coordinates $(\mathbf{p}, \mathbf{q})$ corresponding to $$\sum_{k\ge 0} \sum_{i \in I} q_{k}^i \phi_i z^k + \sum_{k\ge 0} \sum_{i \in I} p_{k, i} \phi^i (-z)^{-k-1}.$$

The dilaton shift is a slight adjustment fitting the $\mathbf{t}$-coordinates to our framework: \begin{equation*}\label{dilaton}q_1^0 = t_1^0 -1, q_k^i = t_k^i.\end{equation*}

We define the \textit{Givental Lagrangian cone} $$\mathcal{L}_{\circ} = \{\mathbf{p} = \mathrm{d}_{\mathbf{q}} \mathcal{F}_0^{\circ}\},$$ which can be shown to be Lagrangian with respect to $\Omega^{\circ}$ by some basic generalised Gromov-Witten theory \cite{CPS}, and is clearly a cone. Its elements may be written $$-\phi_0z + \sum_{\substack{k\ge 0\\ i \in I}} t_{k}^i \phi_i z^k + \sum_{\substack{a_1, \ldots, a_n, a \ge 0\\i_1, \ldots, i_n, i \in I}} \frac{t_{a_1}^{i_1} \ldots t_{a_n}^{i_n}}{n!(-z)^{a+1}} \langle \psi^{a_1} \phi_{i_1}, \ldots, \psi^{1_n}\phi_{i_n}, \psi^a\phi_i\rangle^{\circ}_{0, n+1} \phi^i.$$

By some further Gromov-Witten theory \cite{CPS} it can be shown that $$\mathcal{L} \cap T_f\mathcal{L} = z T_f\mathcal{L},$$
where $\mathcal{L}$ is ruled by the $zT_f\mathcal{L}$ over all $f$, and we have a filtration $$T_f\mathcal{L} \supset zT_f\mathcal{L} \supset z^2 T_f\mathcal{L} \supset \ldots$$

The image of a function $f(\mathbf{t})$ (with $\mathbf{t} \in \mathcal{H}^{\circ}$) corresponds to a `slice' of $\mathcal{L}$. In this way $\mathcal{L}^{\circ} \cap -\phi_0z \oplus \mathcal{H} \oplus \mathcal{V}_-$ corresponds to the J-function: 

$$ J^{\circ}(\mathbf{t}, z) = \phi_0z + \mathbf{t} + \sum_{n \ge 0}\sum_{a \ge0, i \in I} \frac{1}{n!z^{a+1}}\langle \mathbf{t}, \ldots, \mathbf{t},\phi_i \psi^a\rangle_{0, n+1}^{\circ} \phi^i.$$  
 
Givental's version of mirror symmetry relates the J-function to the \textit{I-function} $I(\mathbf{t}, z)$, which in the Calabi-Yau sense provides solutions to the Picard-Fuchs equations of the Calabi-Yau family. Mirror symmetry \textit{in the sense of Givental} insists there is a \textit{mirror map} $\tau:\mathcal{H} \to \mathcal{H}^{\vee}$ so that $$J^{\circ}(\tau(\mathbf{t}), -z) = I^{\circ}(\mathbf{t}, -z),$$ where on the Calabi-Yau side, $\mathcal{H}^{\vee}$ is  the state space of the mirror orbifold. That is, there is an invertible mirror map between the cohomologies of two mirror manifolds that not only rotates the Hodge diamond by 90$^{\circ}$ but also swaps their I- and J-functions. Based on the types of string theory to which they are associated, it is common to refer to the K\"ahler geometry relating to the J-function and  the (1, 1)-sector of the Hodge diamond relating to it as the `A-side', and the complex geometry relating to the I-function, Picard-Fuchs equations and the (2, 1)-sector as the `B side'. Thus a mirror pair have their A- and B-sides swapped.

In order to calculate these J-functions (on either side), however, we define an equivariant theory first and take the non-equivariant limit.  $\mathbb{C}^*$ acts on each $\mathcal{L}_k$ by multiplication on each fibre, and on each moduli space by performing this action pointwise. The equivariant theories are defined over the ground ring $R = H^*_{\mathbb{C}^*}(pt, \mathbb{C})[[s_0, s_1, \ldots]]$, and the invariants are given by cupping the integrand with (\textit{twisting by}) the multiplicative characteristic class $$\mathbb{c}(E) := \mathrm{exp}(\sum_k s_k \mathrm{ch}_k(E)),$$ where in our case $E$ will be the standard virtual class of either moduli space. 

If $s_d = 0$ for all $d$, this class is zero and we still have the \textit{untwisted theory}. For
$$s_d = \left\{
\begin{matrix}
-\mathrm{ln}\lambda, & d=0\\
\frac{(d-1)!}{\lambda^d}, & d>0
\end{matrix}\right.
$$ 
we will recover the dual of the equivariant Euler class of the virtual bundle in the non-equivariant limit.

\begin{proof}
For a line bundle $\mathcal{L}$, we have
\begin{align*}
& \mathrm{exp}(\sum_{d\ge 0}s_d\mathrm{ch}_d(-\mathcal{L}) = \mathrm{exp}(\mathrm{ln}(\lambda)\, \mathrm{ch}_0(\mathcal{L}) - \sum_{d>0} \frac{(d-1)!}{\lambda^d} \mathrm{ch}_d (\mathcal{L})\\
& =  \mathrm{exp}(\mathrm{ch}_0(\check{\mathcal{L}}) - \sum_{d>0} (-1)^{d-1} \frac{(d-1)!}{\lambda^d}\mathrm{ch}_d(\check{\mathcal{L}}))\\
& =  \lambda \mathrm{exp} (\mathrm{ln}(1+\frac{c_1(\check{\mathcal{L}})}{\lambda}))\\
& =  \lambda + c_1(\check{\mathcal{L}}).
\end{align*}
Taking $\lambda \to 0$ gives the desired result for $\mathcal{L}$; as we are working over $\mathbb{C}$, extending by the splitting principle will give us the same result for all vector bundles.
\end{proof}

We use the twisted invariants to define twisted generating functions $\mathcal{F}^{\mathrm{tw}}$, twisted potential $\mathcal{D}^{\mathrm{tw}}$,  a twisted Lagrangian cone $\mathcal{L}^{\mathrm{tw}}$ and a twisted J-function $J^{\mathrm{tw}}$. When appropriate we will denote their untwisted analogues $\mathcal{F}^{\mathrm{un}}, \mathcal{D}^{\mathrm{un}}, \mathcal{L}^{\mathrm{un}}, J^{\mathrm{tw}}$.

%Givental himself proved the mirror theorem for the quintic threefold. Such mirror theorems have been established for several classes of Calabi-Yau orbifolds, but not Borcea-Voisin orbifolds.

\subsection{The Landau-Ginzburg/Calabi-Yau Correspondence and Mirror Symmetry}

Beyond analogous definitions, mirror symmetry is a non-trivial duality relating the I-function and J-function. The LG/CY correspondence furthermore relates the state spaces and I-functions and J-functions of GW theory and FJRW theory:
%\newline

\begin{center}
Calabi-Yau side
\end{center}
$$
\xymatrix{
\mathcal{H}^{GW}(\mathcal{Y}), J_{\mathrm{GW}}(\mathcal{Y}) \ar@{<->}[r] \ar@{<->}[d] & \mathcal{H}^{GW}(\check{\mathcal{Y}}), I_{\mathrm{GW}}(\mathcal{Y}) \ar@{<->}[d] \cong J_{\mathrm{GW}}(\check{\mathcal{Y}})\\
\mathcal{H}^{FJRW}(W, G), J_{\mathrm{GW}}(\mathcal{X}) \ar@{<->}[r]  & \mathcal{H}^{FJRW}(\check{(W, G)}), I_{\mathrm{FJRW}}(W, G) \cong J_{\mathrm{FJRW}}(\check{W}, \check{G}) }
$$
\begin{center}
Landau-Ginzburg side
\end{center}

We shall restrict to the narrow FJRW sectors of $(W, G)$, which correspond to the cohomology sectors induced by the ambient space of $\mathcal{Y}$ on the GW side, and then relate $I_{\mathrm{GW}}^{\mathrm{amb}}(\mathbf{t}, -z)$ and $I_{\mathrm{FJRW}}^{\mathrm{nar}}(\mathbf{t}, -z)$, which can both be found by \textit{twisting} the actual I-functions of the ambient space or its LG dual. On the Gromov-Witten B-side we take the I-function to provide solutions to the Picard-Fuchs equations of a family parametrised by $\psi$ around the point $\psi = 0$. Then the Landau-Ginzburg I-function is taken at $\psi = \infty$. The actual LG/CY correspondence is thus given as the composition of an analytic continuation from $\psi = 0$ to $\infty$ and a symplectic transformation between $\mathcal{V}^{\mathrm{GW}}$ and $\mathcal{V}^{\mathrm{FJRW}}$.
 
As a final note, the bottom arrow represents a neat formulation of mirror symmetry on the LG side due to Krawitz \cite{K} known as Bergland-Huebsch-Krawitz mirror symmetry. Where aspects of mirror symmetry are themselves difficult to prove on the Calabi-Yau side, the hope is thus that the FJRW theory will be easier to compute.

\section{The Gromov-Witten side}

\subsection{The State Space}

There are two elliptic curves of Fermat type $X^2 + P(Y, Z)$ with anti-symplectic involution given by $X \mapsto -X,$ tabulated below.
$$
\begin{tabular}{|l|l|}
\hline
$P(X, Y, Z)$ & Ambient space $\mathbb{P}(v_0, v_1, v_2)$ \\
\hline
$Y^4+Z^4$ & $\mathbb{P}(2, 1, 1)$\\
$Y^3+Z^6$ & $\mathbb{P}(3, 2, 1)$\\
\hline
\end{tabular}$$

We will assume that $E = \{X^2 + Y^4+Z^4=0\}$ throughout. The second curve produces complications, to be discussed later. 

For the sake of simplifaction, from here on $E$ will be the quartic curve $\{X^2+Y^4+Z^4 = 0\}$ in $\mathbb{P}(2,1,1)$,with the corresponding involution  $\sigma_E:X \mapsto -X$ unless otherwise specified. The choice of elliptic curve does not change the full state space, and we shall see that the changes to the narrow part of the state space are minor. Explicit equations and Nikulin involutions for K3 surfaces in weighted projective space given by polynomials of the form $x^2+P(x, y, z)$ are tabulated in \cite{GoLiYu} by invariants $(r, a)$, where $(N, N') = (1+\frac{r-a}{2}, 11-\frac{r+a}{2})$. Those of Fermat type are
\begin{center}
\addvbuffer[8pt, 16pt]{
\begin{tabular}{|l|l|ll|}
\hline
$P(x, y, z)$ & Ambient space $\mathbb{P}(w_0, w_1, w_2, w_3)$ & $N$ & $N'$\\
\hline
$y^6 + z^6 + w^6$ & $\mathbb{P}(3,1,1,1)$ & 1 & 10 \\
$y^5+z^5+w^{10}$ & $\mathbb{P}(5,2,2,1)$ & 2 & 6 \\
$y^3 + z^{10} + w^{15}$ & $\mathbb{P}(15, 10, 3, 2)$ & 4 & 4 \\
$y^3 + z^7 + w^{42}$ & $\mathbb{P}(21, 14, 6, 1)$ & 6 & 6 \\
$y^3 + z^9 + w^{18}$ & $\mathbb{P}(9, 6, 2, 1)$ & 3 & 7\\
$y^4+z^8 + w^8$ & $\mathbb{P}(4, 2, 1,1)$ & 1 & 9 \\
$y^4 + z^5 + w^{20}$ & $\mathbb{P}(10, 5, 4, 1)$ & 2 & 6 \\
$y^4 + z^ 6 + w^{12}$ & $\mathbb{P}(6, 3, 2, 1)$ & 1 & 7 \\
$y^3 + z^{12} + w^{12}$ & $\mathbb{P}(6, 4, 1, 1)$ & 2 & 10 \\
$y^3 + z^8 + w^{24}$ & $\mathbb{P}(12, 8, 3, 1)$ & 3 & 7 \\
\hline
\end{tabular}}
\end{center}

Note that in all of the above cases, only the first has all $w_i$ pairwise relatively prime, and in all other cases any common factor for any two $w_i, w_j$ divides only those two, and all such common factors are prime. Define $d = \mathrm{lcm}(w_0, w_1, w_2, w_3)$.

We consider the K3 surface $\{x^2 + y^6 +z^2 + w^2 = 0\} \subseteq \Proj(3, 1, 1, 1)$, with involution $\sigma_K: x \mapsto -x$. (We use upper case for the coordinates corresponding to the elliptic curve, and lower case for those corresponding to the K3 surface).

We shall also denote the ambient space $[(\Proj_{2, 1, 1} \times \Proj_{w_0, w_1, w_2, w_3})/\mathbb{Z}_2]$ by $\mathcal{X}$.

In this case, from \eqref{eq:coh}, we have $$H_{\mathrm{CR}}^*(\mathcal{Y}) =
 \begin{tabular}{llllllll}
    & & & 1 & & & \\
   & & 0 & & 0 & & \\
   & 0 &  & 6 &  & 0 & \\
   1 & & 60 & & 60 & & 0 \\
   & 0 &  & 6 &  & 0 & \\
   & & 0 & & 0 & & \\
   & & & 1 & & & \\
 \end{tabular}$$
We will be specifically considering the \textit{ambient classes}, that is those induced from classes from $Y$.

We write the ambient space $\X = (\Proj_{2, 1, 1} \times \Proj_{3, 1, 1, 1})/\mathbb{Z}_2$ as  $((\C^3\backslash\{0\}) \times (\C^4 \backslash \{0\}) \times \C^*)/(\C^*)^3$ where $(\C^*)^3$ acts by $$(\lambda_1, \lambda_2, \lambda_3): (X, Y, Z, x, y, z, w, \alpha) \mapsto (\lambda_1^2\lambda_3 X, \lambda_1 Y, \lambda_1 Z, \lambda_2^3{w_0}\lambda_3 x, \lambda_2^{w_1} y, \lambda_2^{w_2} z, \lambda_2^{w_3} w, \lambda_1^2 \alpha).$$ Thus if $\alpha$ is chosen to be 1, then $\lambda_3 = \pm 1$, and the action is that of $\sigma$.

We consider which $(\lambda_1, \lambda_2, \lambda_3)$ have fixpoints - each of these group elements corresponds to a non-empty component of the inertia stack. For example, for $\mathbb{P}(3, 1, 1, 1)$, we consider whether or not $Y, Z = 0$, or $y = z = w = 0$, and find 18 elements: $$(\pm 1, \sqrt[3]{1}, 1), (\pm i, \sqrt[3]{-1}, -1), (\pm i, 1, -1), (1, \sqrt[3]{-1}, -1), (1, 1, -1).$$

However, those which require $Y = Z = 0$ or $y = z= w = 0$ do not intersect $E \subseteq \Proj_{2, 1, 1}$ or $K \subseteq \Proj_{w_0, w_1, w_2, w_3}$, as this would require $X = 0$ or $x = 0$. Only two group elements remain: $(1, 1, 1)$, corresponding to the identity, and $(1, 1, -1)$, corresponding to the involution itself. 

Thus the untwisted and $\sigma$-sectors in $H_{\mathbf{CR}}^{\mathrm{amb}}(\mathcal{Y})$ are generated by the Poincar\'e-dual classes to
$$
\begin{matrix}
\{pt\} \\
E, \, H_K,\, pt_{\sigma}\\
E\times H_K,\, K, \, \Sigma\\
\mathcal{Y}
\end{matrix}
$$
where $H_K$ is the intersection of a hyperplane in $\mathbb{P}_{w_0, w_1, w_2, w_3}$ with $K$, and $\Sigma$ is the fixpoint set of $\sigma$, isomorphic to four copies of $\Sigma_{\frac{\mathrm{lcm}(w_1, w_2, w_3)-1)}{2}\frac{\mathrm{lcm}(w_1, w_2, w_3)-2)}{2}}$, by the degree-genus formula. We will find it useful to write the classes in a slightly different way: Let $D_i = \{ X_i = 0\}$ be the $i$-th coordinate $X_i$, $i = 1, \ldots, 8$. We will express these divisors in terms of the toric divisors $D_E = H_E \times K$ and $D_K = E \times H_K$. We have $D_E = K, D_K = E \times H_K, D_K\cup D_K = E, D_E \cup D_K = H_K, D_E \cup D_K \cup D_K = \{ pt \}, D_E^2 = D_K^2 = 0$. We write $\mathbf{1}_{\sigma}$ for the identity class on the $\sigma$-sector, and the point class on that sector is given by $D_K \cup \mathbf{1}_{\sigma}$ after scaling (which can be seen from the intersection form on a resolution of the orbifold, and B\'ezout's theorem). However, $D_E \cup \mathbf{1}_{\sigma}$ can be shown in both of these ways to be zero, so that $D_E, D_K, \mathbf{1}_{\sigma}$ are multiplicative generators of a basis of the cohomology. For the untwisted sector it is the same as the usual intersection product, and for the twisted sector we have $\mathbf{1}_{\sigma} D_E  = 0$, so that we may take the (one-dimensional) twisted part of degree 4 to be generated by $\mathbf{1}_{\sigma} D_K.$ For $\mathbb{P}(3, 1, 1, 1)$, the story ends here.

For the other cases, when $w_i, w_j$ share a common prime factor $p$, there is also a fixed subspace $\mathbb{P}(\frac{w_i}{p}, \frac{w_j}{p})$, one copy each corresponding to elements of a subgroup $\mathbb{Z}_p$. There are two cases: first, one of $i, j$, WLOG $i = 0$, in which case the intersection with $K$ is $\frac{d}{w_j}$ $\mathcal{B}\mathbb{Z}_p$, but does not intersect $\mathrm{Fix}(\sigma)$; we thus have $(p-1)$ dimensions of ambient classes of degree $2$ and, multiplying by $H_E \times K$, another $(p-1)$ dimensions of ambient classes of degree $4$. Otherwise, neither of $i, j = 0$, in which case we have that $\mathbb{P}(\frac{w_i}{p}, \frac{w_j}{p}) \cap K \subseteq \mathrm{Fix}(\sigma)$, contributing the non-trivial degree-2 sectors from $\mathcal{B}\mathbb{Z}_p$, which, cupping with $D_E = H_E \times K$, contributes just as many of degree 4. Furthermore, multiples of $H_E \times K$ are the only classes that cup with these classes non-trivially.

%More formally, for distinct $i, j = 0, 1, 2, 3$ we may write $p_{i, j} = \mathrm{gcd}(w_i, w_j)$, let $g_{i, j}$ a generator of the corresponding $\mathbb{Z}_{p_{i, j}}$, and let $g'_j$ be a generator of $\mathbb{Z}_{2p_{0, j}}$. 

For $p_{i, j}>1$ for distinct $i, j$, we must include all $(a, b, c) \in \mathbb{Z} \times  \frac{1}{p_{i, j}}\mathbb{Z} \times \mathbb{Z}$. Under the valuation map such $(a, b, c)$ correspond to the sectors generated by $\mathbf{1}_{{g_{i, j}}^r}$ where $b \equiv_1 \frac{r}{p_{i, j}}$. These all have degree 2, and we have as many classes again given by $D_E\mathbf{1}_{g_{i, j}^r}$.

For $p_{0, j} >1$ for $j>0$, $\mathrm{Fix}((\lambda_1, \lambda_2, -1))$ for non-trivial $\lambda_2$ may also be non-trivial: if $w_i = 2p \not \vert w_0$, we have $(\zeta_{2p}^{k})^{2p} =1$ for all $k$ and  $(-1)(\zeta_{2p}^k)^{p\frac{w_0}{p}} = 1$ for \textit{odd} $k$. This means that both $x$ and the coordinate corresponding to $w_i$ are both fixed, and so by B\'ezout's theorem we have a non-empty zero-dimensional intersection of this one-dimensional weighted projective subspace with $K$, adding a new point twisted sector for each odd $k$ from $1, \ldots, 2p$: there are $p$ of these, denoted $\mathbf{1}_{g'_{\sigma \frac{r}{2p_{0, i}}}}$.

The final possible situation giving non-empty twisted sectors arises when $w_i, w_j$ do not divide $w_0$ but share a common factor for $i \ne 0$ (which must be 2): this only occurs for $(5, 2, 2, 1)$, $(21, 14, 6, 1)$ and $(15, 10, 3, 2)$. We have $\mathrm{Fix}(1, -1, -1) = \{w = 0\}$, which also has non-trivial intersection with $K$ and gives $(2-1) = 1$ twisted copy of $\mathbb{P}(\frac{w_i}{2}, \frac{w_j}{2})$. This contributes just as many new sectors: $(p-1) + (q-1)$ point sectors of the form $\mathbf{1}_{{\sigma g'}_i^{\frac{r}{2p_{0, i}}}}, \mathbf{1}_{{\sigma g'}_{\frac{r}{2p_{0, i}}}}$, and one 1-dimensional sector $\mathbf{1}_{{\sigma g'}_{\frac{r}{2p_{0, i}}}} = \mathbf{1}_{{\sigma g'}_{\frac{r}{2p_{0, j}}}}$, which we shall denote $\mathbf{1}_{\tilde{g}}$, and the generic point in the same sector equal to $D_K\mathbf{1}_{\tilde{g}}$.  Half of these classes, including $\mathbf{1}_{\tilde{g}}$, have degree 2, and the rest have degree 4.

Let $$S_j^{\alpha} = \{\frac{r}{2p_{0, j}}: 2\not\vert r\, r \le 2p_{0,j} - 1, \, \mathrm{deg}_{\mathrm{CR}}(-1, \zeta_{2p}^r, -1) = \alpha\},$$ and let $$S(\alpha) = \bigcup_{j: \,w_j \not\vert w_0} S_j^{\alpha}.$$

%Any $g$ such that $\mathbf{1}_{\sigma g}$ is dual to a point  cups with other class to zero. In the three exceptional cases, we have a projective line dual to a class $\mathbf{1}_{\sigma g_{1, 0}^{w_1/2}} = \mathbf{1}_{\sigma g_{2, 0}^{w_2/2}}$. The other class (dual to a point on this line in this sector) is given by cupping this with $D_K$. %This completely determines the ring structure.
%For $(w_0, w_1, w_2, w_3) = (5, 2, 2, 1)$, $(21 ,14, 6, 1)$ or $(15, 10, 3, 2)$, there is a unique degree-2 sector, given explicitly above, which we shall denote $\mathbf{1}_{\tilde{g}}$, such that $D_K\mathbf{1}_{\tilde{g}} \ne 0$, so we include this degree-4 sector as well.

Under the CR-pairing, $(\mathbf{1}_g)^{-1} = D_E\mathbf{1}_{g^{-1}}, \; (\mathbf{1}_{\sigma g})^{-1} =  \mathbf{1}_{\sigma g^{-1}}$ for $g \ne \tilde{g}$, and $(\mathbf{1}_{\sigma \tilde{g}})^{-1} = D_K\mathbf{1}_{\sigma \tilde{g}}$.

Then the ambient part of $\mathcal{H}^{\mathrm{amb}}(\mathcal{Y})$ is in general given by the following sets of generators, ordered by degree:
\begin{equation*}
\label{eq:cramb}
\begin{matrix}
\{\mathbf{1}\} \\
\{D_E, D_K, \mathbf{1}_{\sigma}\} \cup \bigcup_{\substack{p_{i, j} >1\\ 1\le r\le p_{i, j}-1}} \{\mathbf{1}_{g_{i, j}^r}\} \cup \bigcup_{\frac{r}{p} \in S(2)} \{\mathbf{1}_{g_{\frac{r}{2p}}}\} \\
\{D_ED_K, D_K^2, D_K\mathbf{1}_{\sigma}\} \cup\bigcup_{\substack{p_{i, j} >1\\ 1\le r\le p_{i, j}-1}} \{D_E\mathbf{1}_{g_{i, j}^r}\} \cup \bigcup_{\frac{r}{p} \in S(4)} \mathbf{1}_{\sigma g'_{\frac{r}{p}}}   [\cup  \{ D_K\mathbf{1}_{\sigma \tilde{g}}\}] \\
\{u\}
\end{matrix}
\end{equation*}
where the sector $D_K\mathbf{1}_{\sigma \tilde{g}}$ is included when $w_1 = \frac{d}{3}$. %there is a unique degree-2 sector, given explicitly above, which we shall denote $\mathbf{1}_{\tilde{g}}$, such that $D_K\mathbf{1}_{\tilde{g}} \ne 0$, so we include this degree-4 sector as well.  %except for $(w_0, w_1, w_2, w_3) = (5, 2, 2, 1)$ or $(21 ,14, 6, 1)$, where two sectors $\mathbf{1}_{\sigma g_{1, 0}^{w_1/2}}$ and $\mathbf{1}_{\sigma g_{2, 0}^{w_2/2}}$ coincide in the last direct sum of both middle rows are zero, but

For $E = \{X^2+Y^3+Z^6 = 0\},$ similar arguments hold as for $K$, and there is an extra $\sigma_E$-twisted sector, doubling the number of $\sigma$-twisted sectors. 

\subsection{Enumerative Geometry}

We treat $\mathcal{X}$ as a Deligne-Mumford toric stacks in the sense of \cite{BCS}.

$\mathcal{X}$ corresponds to the stacky fan $(N, \mathbf{\Sigma}, \rho)$ where $$N = \mathbb{Z}^5 + \langle (\frac{1}{2}, \frac{1}{2}, 0, 0, 0)\rangle + \langle (0, 0, \frac{w_1}{w_0}, \frac{w_2}{w_0}, \frac{w_3}{w_0})\rangle + \langle (\frac{1}{4}, \frac{1}{4}, \frac{w_1}{2w_0}, \frac{w_2}{2w_0}, \frac{w_3}{2w_0})\rangle \subset \mathbb{Z}^5 \otimes \mathbb{Q},$$ with $\rho:\mathbb{Z}^8 \to N$ given by $$\begin{pmatrix}
-\frac{1}{2} & 1 & 0 & 0 & 0 & 0 & 0 & \frac{1}{4}\\
-\frac{1}{2} & 0 & 1 & 0 & 0 & 0 & 0 & \frac{1}{4}\\
0 & 0 & 0 & -\frac{w_1}{w_0} & 1 & 0 & 0 & \frac{w_1}{2w_0}\\
0 & 0 & 0 & -\frac{w_2}{w_0} & 0 & 1 & 0 & \frac{w_2}{2w_0} \\
0 & 0 & 0 & -\frac{w_3}{w_0} & 0 & 0 & 1 & \frac{w_3}{2w_0}
\end{pmatrix}$$
where the columns $\rho_1, \rho_2, \ldots, \rho_8$ and the maximal cones of $\Sigma$ are those generated by all but one of $\{\rho_1, \rho_2, \rho_3\}$, and all but one of $\{\rho_4, \rho_5, \rho_6, \rho_7\}$ (and excludes $\rho_8$, which appears only due to the factor $\mathbb{C}^*$ which is solely included to represent the $\mathbb{Z}_2$-action as toric).

The \textit{box} of $\mathbf{\Sigma}$ is $$\mathrm{Box}(\mathcal{X}) := \{\sum_{i: \rho_i \in \kappa} a_i \rho_i \text{ for some } \kappa \in \mathbf{\Sigma}, \, 0 \le a_i < 1  \},$$
and is in one-to-one correspondence with the set of sectors of the inertial manifold of $\mathcal{X}$. We are interested in the subset of those which induce non-empty sectors in $\mathcal{Y}$, which we shall denote $\mathrm{Box}(\mathcal{Y})$. We will also denote the elements of the box corresponding to $\mathbf{1}_{g_{i, j}^r}$ and $\mathbf{1}_{g_{\sigma\frac{r}{2p}}}$ by $\mathbf{b}_{g_{i, j}^r}$ and  $\mathbf{b}_{\sigma g'_{\frac{r}{2p}}}$ respectively. %delete later repeat

The twisted sectors coming from $\mathbb{P}(w_0, w_1, w_2, w_3)$ are given by the following. We note that from the construction of the toric fan of weighted projective space, $w_0\rho_4 + w_1\rho_5 + w_2\rho_6+w_3\rho_7 = 0$, so that for each $p_{i, j} := \mathrm{gcd}(w_i, w_j) >1$, $$\frac{rw_i}{p_{i, j}}\rho_{4+i} + \frac{rw_j}{p_{i, j}}\rho_{4+j} + \sum_{k \ne i, j} \lceil \frac{rw_k}{p_{i, j}}\rceil\rho_{4+k} = \sum_{k \ne i, j} \langle - \frac{rw_k}{p_{i, j}} \rangle\rho_{4+k}$$ is an element of the box, since it is given as a member of $N$ on the left and its $\rho_i$-coordinates are strictly bounded by 1 in the expression on the right.

%*** FJRW end: These are the same coordinates, these are narrow since they divide ...?

For $p_{0, i}>1$, we refer to \cite{BCS} to note that when we have $c_1, c_2 \in \kappa \in \mathbf{\Sigma}$, then $\mathbf{1}_{c_1}\mathbf{1}_{c_2} = \mathbf{1}_{c_1 + c_2}$. $\mathbf{1}_{\sigma}$ is represented by $\rho_8 = \frac{1}{2}(\rho_1+\rho_4)$, and so the sectors generated by elements of the form  $\mathbf{1}_{\sigma g'_{\frac{r}{2p}}}$ are given by $\frac{1}{2}(\rho_1+\rho_4) + [g'_{\frac{r}{2p}}]$, the latter being the fan representation given above; we consider those inducing non-zero sectors in $\mathrm{Box}(\mathcal{Y})$ as before.

We extend $\mathbf{\Sigma}$ by the sectors represented by $s_1, \ldots, s_l, s_{l+1}, \ldots, s_m$, where the sectors $s_1, \ldots, s_l$ represent the $\mathbf{1}_{g_{i, j}^r}$, and $s_{l+1}, \ldots, s_{m}$ correspond to the sectors given by the elements of $S(2)$.  The representations found above give coefficients $s_{i, j}$, $0 \le s_{i, j}<1$ such that $\sum_{i: \rho_i \in \sigma(j)} s_{i,j}\rho_i = s_j$, where $\sigma(j)$ is the cone containing $s_j$ (we set all other coefficients for each $j$ to be zero). Furthermore, such $s_{i, j}$ are unique. Note that $s_{1, j} = s_{4, j} = 0$ for $1 \le j \le l$ and $s_{1, j} = s_{4, j} = -\frac{1}{2}$ for $l+1 \le j \le m$.

Let $1 \le m \le M = := \vert \mathrm{Box}(\mathcal{Y}) \vert$ and $S = \{1, 2, \ldots, m\}$. Then choose an injective function $S \mapsto \mathrm{Box}(\mathcal{Y})$, given by $i \mapsto s_i$, $1 \le i \le m$. We define the \textit{$S$-extended stacky fan} $\mathbf{\Sigma}^S$ by extending $\rho:\mathbb{Z}^8 \to N$ to $\rho^S: \mathbf{Z}^{8+m} \to N$ by setting $\rho_{8+i} = S(i)$. It represents the same orbifold as $\Sigma$ \cite{CCIT1}. 
Following \cite{BCS}, we  tautologically lift $\rho$ to $R = I$ with projection $\rho$:
$$\xymatrix{
& \mathbb{Z}^8 \ar[d]^{\pi = \rho} \\
\mathbb{Z}^8 \ar[ur]^{R = I} \ar[r]^{\rho} & N
}$$
with kernel $\xymatrix{\mathbb{L} \ar@{^{(}->}[r]^Q & \mathbb{Z}^8}$, where $$Q = \begin{pmatrix}
2 & 0 & 1\\
1 & 0 & 0\\
1 & 0 & 0\\
0 & w_0 & 1\\
0 & w_1 & 0\\
0 & w_2 & 0\\
0 & w_3 & 0\\
0 & 0 & 2
\end{pmatrix}.$$
Taking the Gale dual we find $\check{\mathbb{L}} \cong \mathbb{Z}^3$ and  $$\check{\rho} : \mathbb{Z}^{3+8}/\mathrm{ker}\,\mathrm{Im}\, ([RQ]^*) \to \mathrm{ker}\,\mathrm{Im}\, ([RQ]^*)$$ is given by the same matrix, so that the group $$G = \mathrm{Hom}\,(\check{\mathbb{L}},\mathbb{C}^*) =  (\mathbb{C}^*)^3$$ acts by $$\alpha = \begin{pmatrix}
2 & 1 & 1 & 0 & 0 & 0 & 0 & 0\\
0 & 0 & 0 & w_0 & w_1 & w_2 & w_3 & 0\\
1 & 0 & 0 & 1 & 0 & 0 & 0 & 2
\end{pmatrix},$$ the desired weights. The columns here correspond to $D_i$ in $\mathbb{L}$ \cite{CCIT1}. From this the Chen-Ruan cohomology may also be computed from this construction via \cite{BCS}: this agrees with our outline in the previous subsection.

From \cite{BCS} we can relate the Chen-Ruan cohomology to the stacky fan construction. The twisted sector generated by $\mathbf{1}_{\sigma}$ is given by $\rho_8 = \frac{w_1}{2w_0}\rho_1 + \frac{w_2}{2w_0}\rho_2 + \frac{w_3}{2w_0}\rho_3$.

The corresponding kernel $\mathbb{L}^S$ under $S$-extension fits into the exact sequence $$0 \to \mathbb{L} \to \mathbb{L}^S \to \mathbb{Z}^m.$$ Considering these inside their tensor products with $\mathbb{Q}$, this sequence splits via the map $e_j \mapsto e_{8+j} - \sum_i s_{j,i} e_{i}$. Therefore $\mathbb{L}^S \otimes \mathbb{Q}$ is the image of
\begin{align*}
(a, b, c, k_1, \ldots, k_m) \mapsto & a(2\rho_1 + \rho_2 + \rho_3) + b(\sum_{i=1}^4 w_i \rho_{4+i} + c (\rho_1 + \rho_4 + 2\rho_8) + \sum_{j=1}^m k_j(s_j - \sum_{i=0}^8 s_{j, i}\rho_{i}).
%(2a+ c)\rho_1 + a(\rho_2 + \rho_3) w_0b+c - \sum_{j=1}^{m}k_js_{j0}, w_1b-\sum_{j=1}^{m}k_js_{j1}, w_2b-\sum_{j=1}^{m}k_js_{j2}, \\
 %& w_3b-\sum_{j=1}^{m}k_js_{j3}, 2c, k_1, k_2, \ldots, k_m): a, b, c, k_i \in \mathbb{Q}\}
 \end{align*}

The Mori cone $\mathrm{NE}(\mathcal{X}) \subset \mathbb{L}$ is $\sum_{\kappa \in \mathbf{\Sigma}} \mathbb{R}_{\ge 0} \check{C_{\tau}}$, where $\check{C_{\kappa}}$ is the dual cone of $C_{\kappa} = \sum_{i \notin \kappa} \mathbb{R}_{\ge 0} \rho_i$. Sorting through the cases, we get $$\mathrm{NE}(\mathcal{X}) = \{(a, b, c) \in \mathbb{L} \,\vert\, c \ge 0, 2a+c \ge 0, w_0b+c \ge 0\}.$$ This is generated by rays
\begin{align*}
& \{ a + \frac{c}{2} \ge 0, b + \frac{c}{w_0} = 0, c = 0 \} = \langle (1, 0, 0)\rangle,\\
& \{a + \frac{c}{2} = 0, b + \frac{c}{w_0} \ge 0, c = 0 \} = \langle (0, 1, 0)\rangle,\\
& \{a + \frac{c}{2} = 0, b + \frac{c}{w_0} = 0, c \ge 0 \} = \langle (-\frac{1}{2}, -\frac{1}{w_0}, 1)\rangle.\\
\end{align*}

The $S$-extended Mori cone is given by $NE^S(\mathcal{X}) := NE(\mathcal{X}) \times \mathbf{R}_{\ge0}^m$.

Using the notation of \cite{CCIT1}, we define $\Lambda_{\kappa}^S = \{ \sum_i^{8+m} \lambda_i e_i \in \mathbb{L} \otimes \mathbb{Q} \, \vert \, i \not\in \kappa \implies \lambda_i \in \mathbb{Z}; \lambda_i \in \mathbb{Z}, i > 8 \}$, and $\Lambda^S = \bigcup_{\kappa \in \Sigma} \Lambda_{\kappa}^S$. That is, those elements of $\mathbf{L}^S$ for which the following must hold: $2c \in \mathbb{Z}$, one of $2a+c - \sum_j k_j s_{j, 1} \in \mathbb{Z}$ or $a \in \mathbb{Z}$, and one of $w_0b + c - \sum_j k_j s_{j, 4} \in \mathbb{Z}$ or $w_ib - \sum_j k_j s_{j, i}\in \mathbb{Z}$, for some $i = 5, 6, 7$. The \textit{valuation map} is then defined to be $$v^S: \Lambda \to \mathrm{Box}(\mathbf{\Sigma})$$ by $$\lambda \mapsto \sum_{i=1}^8 \lceil \lambda_i \rceil \rho_i = \sum_{i=1}^8 \langle -\lambda_i \rangle \rho_i,$$ the latter inequality holding by equations defining $\mathbb{L}$. We set $\Lambda_{\mathbf{b}} = (v^S)^{-1}(\mathbf{b})$, $\Lambda E (\mathcal{X}) =  \Lambda \cap \mathrm{NE}(\mathcal{X})$ and $\Lambda E_{\mathbf{b}} (\mathcal{X}) = \Lambda_{\mathbf{b}} \cap \mathrm{NE}(\mathcal{X}).$

The untwisted I-function is given in \cite{CCIT1} by
$$I_{\mathrm{GW}}(\mathbf{t}, z) = ze^{\sum_i D_it_i/z}\sum_{\mathbf{b} \in \mathrm{Box}(\mathbf{\Sigma})} \sum_{\lambda \in \Lambda E_{\mathbf{b}}} q^{\lambda} \prod_i^{8+m} e^{(D_i \cdot \lambda) t_i} \frac{\prod_{\langle d \rangle = \langle \lambda_i \rangle, \, d \le 0} (D_i+dz)}{\prod_{\langle d \rangle = \langle \lambda_i \rangle, d \le \lambda_i} (D_i+dz)}\mathbf{1}_b,$$
where $q^{\lambda}$ are the Novikov variables recording the class $\lambda$, and the $D_i$ are the divisor classes given above for $i\le 8$, and zero for $i>8$.

Denote each term corresponding to $(\mathbf{b}, \lambda)$ by $I_{\mathbf{b}, \lambda}(t, z)$. $\mathcal{Y}$ is the generic zero section of the bundle $\mathcal{E} = \mathcal{O}(4,0) \oplus \mathcal{O}(0,2w_0)$. The \textit{twisted} I-function of this bundle is then given in \cite{CCIT2} by $$\sum_{\mathbf{b}, \lambda} I_{\mathbf{b}, \lambda}(t, z) M_{\mathbf{b}, \lambda}(t, z),$$ via the modification factors $$M_{\mathbf{b}, \lambda}(t, z) = \prod_{k=1}^{2\cdot(2a+c+\frac{1}{2}\sum_j={l+1}^m k_j)}(4D_E + kz)\prod_{l=1}^{2\cdot(w_0b+c+\frac{1}{2}\sum_{j=l+1}^m k_j)} (2w_0D_K+lz).$$

Note that considering the cases when $\lambda_i \in \mathbb{Z}$ and $\lambda_i \not\in \mathbb{Z}$ separately, we may write $$\frac{\prod_{\langle d \rangle =  \langle \lambda_i \rangle, \, d \le 0} (D_i+dz)}{\prod_{\langle d \rangle = \langle \lambda_i \rangle, d \le \lambda_i} (D_i+dz)}$$  as $$ z^{-\lceil \lambda_i \rceil} \frac{\Gamma(D_i/z+ \langle \langle \lambda_i \rangle\rangle)}{\Gamma(D_i/z+\lambda_i +1)},$$ where $\langle \langle \lambda \rangle \rangle := 1- \langle 1- \lambda \rangle$. We rewrite the modification factor similarly. For $i=8$, this gives a factor of $\frac{1}{(2c)!}$, and for $i = 8+j$, this gives a factor of $\frac{1}{k_j!}$. This re-expression will allow us to extend the function analytically.

We now have all the ingredients to write the I-function in our case explicitly. Collect all the factors constant in $m, n, c, \mathbf{k}$ as $K_{\mathbf{b}}, L_{\mathbf{b}}$ for each $\mathbf{b} \in \mathrm{Box}(\mathcal{Y})$:
%\tiny
\begin{align*}
& K_0 = K_{{g}_{i, j}^r} = \frac{\Gamma(2D_E/z+1)\Gamma(D_E/z+1)^2}{\Gamma(4D_E/z + 1)}\\
& K_{\sigma} = K_{\sigma{g'}_{\frac{r}{2p}}} = z^{-1/2}\frac{\Gamma(2D_E/z+\frac{1}{2})\Gamma(D_E/z+1)^2}{\Gamma(4D_E/z + 1)}\\
%& K_{{g}_{i, j}^r} =  z^{\sum_{\nu} ((\mathbf{b}_{g_{i, j}^r})_{4+\nu} - \lceil (\mathbf{b}_{g_{i, j}^r})_{4+\nu} \rceil)} \frac{\Gamma(2D_E/z+1) \Gamma(D_E/z+1)^2 \prod_{i=0}^3\Gamma(w_iD_K/z +(\mathbf{b}_{g_{p_{i, j}^r}})_{4+\nu} +1)}{\Gamma(4D_E/z+1)\Gamma(2w_0D_K/z+1)}\mathbf{1}_{g_{i, j}^r},\\
%&  = K_0\\
%& K_{\sigma{g'}_{\frac{r}{2p}}} = z^{(\mathbf{b}_{g'_{\frac{p}{2r}}})_{1} - \lceil ((\mathbf{b}_{g'_{\frac{p}{2r}}})_{1} \rceil)}\frac{\Gamma(2D_E/z+\frac{1}{2}) \Gamma(D_E/z+1)^2}{\Gamma(4D_E/z + 1)}\\
&L_0 = \frac{\Gamma(w_0D_K/z+1)\prod_{\nu=1}^3\Gamma(w_{\nu} D_K/z+1)}{\Gamma(2w_0D_K/z + 1)},\\
&L_{\sigma} = z^{-1/2}\frac{\Gamma(w_0D_K/z+\frac{1}{2})\prod_{\nu=1}^3\Gamma(w_{\nu} D_K/z+1)}{\Gamma(2w_0D_K/z + 1)},\\
&L_{g_{i, j}^r} = z^{\sum_{\nu} ((\mathbf{b}_{g_{i, j}^r})_{4+\nu} - \lceil (\mathbf{b}_{g_{i, j}^r})_{4+\nu} \rceil)}\frac{\prod_{i=0}^3\Gamma(w_iD_K/z +(\mathbf{b}_{g_{p_{i, j}^r}})_{4+\nu} +1)}{\Gamma(2w_0D_K/z+1)},\\
&L_{\sigma{g'}_{\frac{r}{2p}}} = z^{\sum_{\nu} ((\mathbf{b}_{g'_{\frac{p}{2r}}})_{4+\nu} - \lceil ((\mathbf{b}_{g'_{\frac{p}{2r}}})_{4+\nu} \rceil)}\frac{\prod_{\nu=0}^3\Gamma(w_{\nu}D_K/z + (\mathbf{b}_{\sigma g'_{\frac{r}{2p}}})_{4+\nu})}{\Gamma(2w_0D_K/z+1)}.
\end{align*}
Then collect the factors depending on $m, n, c, \mathbf{k}$ as $F(a, c, \mathbf{k}), G(b, c, \mathbf{k})$:
\begin{align*}
 F(a, c, \mathbf{k}) & =  \frac{\Gamma(4D_E/z + 4a+2c-2\sum_{\mu=1}^mk_{\mu}s_{\mu, 1} +1)}{\Gamma(2D_E/z+ 2a+c-\sum_{\mu=1}^m k_\mu s_{\mu, 1} +1)\Gamma(D_E/z+a+1)^2(2c+1)!},\\
G(b, c, \mathbf{k}) &= \frac{\Gamma(2w_0D_K/z +2w_0b+2c-2\sum_{\mu=1}^mk_{\mu}s_{\mu, 4}+ 1)}{\Gamma(w_0D_K/z+w_0b+c-\sum_{\mu=1}^mk_\mu s_{\mu, 4}+1)}\\
&\times\frac{1}{\prod_{\nu=1}^3\Gamma(w_{\nu} D_K/z+w_{\nu}b-\sum_{\mu=1}^mk_{\mu} s_{\mu, 4}+1)\prod_{\mu=1}^mk_{\mu}!}.
\end{align*}
Then we have
\begin{align*}
& I_{\mathrm{GW}} (\mathcal{Y}) = ze^{(2D_Et_1 + D_E(t_2+t_3) + \sum_{i=1}^4 w_iD_Kt_{4+i})/z} \times  \nonumber \\
& \sum_{\mathbf{b} \in \mathrm{Box}(\mathcal{Y})} K_{\mathbf{b}} L_{\mathbf{b}} \sum_{\substack{c \in \frac{1}{2}\mathbb{N}_0}}\sum_{\substack{(a, b, k_1, \ldots, k_m) \in \mathbb{N}_0^m\\ a \ge -c/2, b \ge -c/w_0\\v^{S}(a, b, c, k_1, \ldots, k_m) = \mathbf{b}}} (q_1^a q_2^b q_3^c \prod_{j=1}^mx_j^{k_j}) \\ 
& e^{a(2t_1 + t_2 + t_3)+b(\sum_{i=0}^3 w_it_i)+c(t_1 + t_4 + 2t_8)} F_\mathbf{b}(a, c, \mathbf{k}) G_{\mathbf{b}}(b, c, \mathbf{k}) \mathbf{1}_\mathbf{b}.
\end{align*}
Note that all other powers of $z$ cancel, and that this is a direct consequence of the Calabi-Yau condition that the charges sum to 1 in each factor space (the $(4a+2c) + (6b+2c)$ coming from the vector bundle precisely matches the $a + a + 2a + c + b + b + b + 3b + c + 2c$ coming from the ambient space). Note also that setting all the divisors to have degree 1 and all variables other than $z$ to have degree $0$, our function is homogeneous, and only the first multiple sum contributes the lowest powers of $z$. Upon expanding in $z$ we find our I-function to be of the form $$z + (2t_1+t_2+t_3)D_E + (3t_4+t_5+t_6+t_7)D_K + \sum_{\mu=1}^m L_\mu x_{\mu} \mathbf{1}_{\mu} + O(z^{-1});$$ thus the condition $S\sharp$ as defined in \cite{CCIT1} holds, so that the mirror theorem found there applies, as follows.
\begin{prop}
$$J_{\mathrm{GW}}^{\mathcal{Y}}((2t_1+t_2+t_3)D_E + (3t_4+t_5+t_6+t_7)D_K + \sum_{\mu=1}^m L_\mu x_{\mu} \mathbf{1}_{\mu}, z) = I_{\mathrm{GW}}^{\mathcal{Y}}(\mathbf{t}, x_1, \ldots, x_m, z).$$
\end{prop}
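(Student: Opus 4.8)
The plan is to obtain the statement as a direct application of the toric mirror theorem of \cite{CCIT1}, with the passage from the ambient stack $\mathcal{X}$ to the complete intersection $\mathcal{Y}$ handled by the Euler-twisting construction of \cite{CCIT2}. The Lagrangian-cone formalism of the preceding section characterizes $J_{\mathrm{GW}}^{\mathcal{Y}}$ as the unique slice of the cone $\mathcal{L}_{\mathrm{GW}}$ of the form $z\mathbf{1} + \tau + O(z^{-1})$; so it suffices to show both that $I_{\mathrm{GW}}^{\mathcal{Y}}$ lies on $\mathcal{L}_{\mathrm{GW}}$ and that it has exactly this normalized form, with mirror map $\tau$ equal to the displayed argument of $J$.

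First I would invoke the main theorem of \cite{CCIT1} for the $S$-extended stacky fan $\mathbf{\Sigma}^S$ built above: the $S$-extended $I$-function of the toric Deligne--Mumford stack $\mathcal{X}$ lies on the Givental cone $\mathcal{L}_{\mathcal{X}}$. All the combinatorial input that theorem requires — that $\mathbf{\Sigma}^S$ is a genuine $S$-extension representing the same stack, together with the lattice $\Lambda^S$, the valuation map $v^S$, and the $S$-extended Mori cone $\mathrm{NE}^S(\mathcal{X})$ — has been assembled in the enumerative-geometry subsection, so the theorem applies verbatim. Next, since $\mathcal{Y}$ is the zero locus of a generic section of $\mathcal{E} = \mathcal{O}(4,0)\oplus\mathcal{O}(0,2w_0)$, I would pass to the Euler-twisted theory: by \cite{CCIT2}, multiplying each term $I_{\mathbf{b},\lambda}$ by its hypergeometric modification factor $M_{\mathbf{b},\lambda}$ yields a point of the twisted cone $\mathcal{L}^{\mathrm{tw}}$, and taking the non-equivariant limit (with the specialization of twisting parameters recorded in the preceding lemma) carries $\mathcal{L}^{\mathrm{tw}}$ to the ambient Gromov--Witten cone $\mathcal{L}_{\mathrm{GW}}$ of $\mathcal{Y}$ and the twisted $I$-function to $I_{\mathrm{GW}}^{\mathcal{Y}}$.

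This last limit is where the genuine content sits, and I expect it to be the main obstacle. One must check that the modification factors converge in the non-equivariant limit to the (orbifold) Euler class of $\mathcal{E}$, that the limit of $\mathcal{L}^{\mathrm{tw}}$ exists and equals $\mathcal{L}_{\mathrm{GW}}$, and that the Calabi--Yau degree-matching observed above — the cancellation whereby $(4a+2c)+(6b+2c)$ from the bundle exactly offsets the ambient exponent $a+a+2a+c+b+b+b+3b+c+2c$, so that all powers of $z$ cancel — is precisely what renders the limit finite. Orbibundles over orbicurves generically fail convexity, which is exactly why the naive quantum Lefschetz principle must be replaced by this twisted-theory-plus-limit argument.

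Finally, with $I_{\mathrm{GW}}^{\mathcal{Y}} \in \mathcal{L}_{\mathrm{GW}}$ in hand, I would read off its normalized form from the expansion computed just before the statement: $I_{\mathrm{GW}}^{\mathcal{Y}} = z\mathbf{1} + \tau(\mathbf{t}) + O(z^{-1})$ with leading coefficient $f(\mathbf{t}) = 1$, which is precisely the condition $S\sharp$ of \cite{CCIT1} verified above, and with $\tau(\mathbf{t}) = (2t_1+t_2+t_3)D_E + (3t_4+t_5+t_6+t_7)D_K + \sum_{\mu=1}^m L_\mu x_\mu \mathbf{1}_\mu$. Because $f=1$, the general cone relation $J(\tau,z) = I/f$ collapses to the asserted equality $J_{\mathrm{GW}}^{\mathcal{Y}}(\tau,z) = I_{\mathrm{GW}}^{\mathcal{Y}}(\mathbf{t},z)$ with $\tau$ the displayed argument. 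The only remaining bookkeeping is to confirm that no higher-order-in-$\mathbf{t}$ terms contaminate the $z^1$ and $z^0$ coefficients, which follows from the homogeneity noted above: assigning degree $1$ to the divisors and degree $0$ to all other variables, only the leading multiple sum contributes at those orders.
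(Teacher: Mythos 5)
Your proposal is correct and follows essentially the same route as the paper: invoke the toric mirror theorem of \cite{CCIT1} for the $S$-extended stacky fan, pass to the complete intersection via the twisted $I$-function with the modification factors of \cite{CCIT2}, and then conclude from the expansion $I_{\mathrm{GW}}^{\mathcal{Y}} = z + (2t_1+t_2+t_3)D_E + (3t_4+t_5+t_6+t_7)D_K + \sum_{\mu} L_\mu x_\mu \mathbf{1}_\mu + O(z^{-1})$ that condition $S\sharp$ holds, so the mirror theorem applies with trivial mirror-map denominator. The paper records this more tersely (it simply cites the two references and checks the $z$-expansion and homogeneity), but the logical skeleton — including the observation that the Calabi--Yau charge cancellation is what makes all extraneous powers of $z$ disappear — is the same as yours.
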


It will be simpler to demonstrate the correspondence in terms of these I-functions defined above, but in this paper we do not address the Picard-Fuchs equations, so our nomenclature for the I-functions originates solely by analogy from its role in the above mirror theorem; our computations are all in terms of the A side.

For clarity, in our first example $K = \{x^2 + y^6 + z^ 6 + w^ 6 = 0\} \subseteq \mathbb{P}(3, 1, 1, 1)$, the I-function reduces to:
\begin{align*} 
& I_{\mathrm{GW}}^{\mathcal{Y}}(\mathbf{t}, z) =  ze^{(2D_Et_1 + D_E(t_2+t_3) + 3D_Kt_4 + D_K(t_5+t_6+t_7))/z} \times  \nonumber \\
& \bigg(\sum_{c \in \mathbb{N}_0} \sum_{\substack{a, b \in \mathbb{Z}\\ a\ge -c/2\\b \ge -c/3}} q_1^a q_2^b q_3^c e^{(2a+c)t_1 + a(t_2+t_3) + (3b+c)t_4 + b(t_5+t_6+t_7) + 2ct_8} \times \nonumber \\
& \frac{\Gamma(2D_E/z+1) \Gamma(D_E/z+1)^2 \Gamma(3D_K/z+1)\Gamma(D_K/z+1)^3}{\Gamma(2D_E/z+2a+c+1)\Gamma(D_E/z+a+1)^2\Gamma(3D_K/z+3b+c+1)\Gamma(D_K/z+b+1)^3}\times \nonumber \\
& \frac{\Gamma(4D_E/z+4a+2c+1)\Gamma(6D_K/z + 6b+2c + 1)}{\Gamma(2c+1)\Gamma(4D_E/z+1)\Gamma(6D_K/z+1)} \nonumber \\
+& z^{-1}\sum_{c \in \frac{1}{2}\mathbb{N}_0\backslash \mathbb{N}_0} \sum_{\substack{a, b \in \mathbb{Z}\\ a\ge -c/2\\b \ge -c/3}} q_1^a q_2^b q_3^c e^{(2a+c)t_1 + a(t_2+t_3) + (3b+c)t_4 + b(t_5+t_6+t_7) + 2ct_8} \times \nonumber \\
& \frac{\Gamma(2D_E/z+\frac{1}{2}) \Gamma(D_E/z+1)^2 \Gamma(3D_K/z+\frac{1}{2})\Gamma(D_K/z+1)^3 }{\Gamma(2D_E/z+2a+c+1)\Gamma(D_E/z+a+1)^2\Gamma(3D_K/z+3b+c+1)\Gamma(D_K/z+b+1)^3})\times \nonumber \\
 & \frac{\Gamma(4D_E/z+4a+2c+1)\Gamma(6D_K/z + 6b+2c + 1)}{\Gamma(2c+1)\Gamma(4D_E/z+1)\Gamma(6D_K/z+1)}\mathbf{1}_{\sigma}\bigg).
\end{align*}

Finally, for $E = \{X^2+Y^3+Z^6 = 0\}$, we have double the number of $\sigma$-twisted sectors, and the I-function may be found in precisely the same way, but we shall omit explicit expressions for simplicity.

\section{The Pure Landau-Ginzburg Side}

\subsection{The FJRW State Space}

In this subsection we compute the narrow FJRW state spaces for $W = X^2+Y^4+Z^4+x^2+P(y, z, w)$, where $P(y, z, w)$ has degree $d$ and weights $w_0, w_1, w_2, w_3$ and $G = \langle J_1, J_2, \sigma\rangle$. We let $q_k$ be the associated charges of $W$. (Note that $w_i = \bar{w}_{4+i}$ in our notation from \S 1). First, it is instructive to compute the full state space for an example. Below we find the full state space for $P(y, z, w) =  y^6+z^6+w^6$. 
\newline

%We compute the dimensions of the fixpoint sets and degrees of the group elements of $G$ below. We label each by cases according to their fixpoint subspaces. The correspondence follows for many orbifolds, as first found in \cite{ChiRu2}. We give explicit computations for clarity.

$$\begin{tabular}{|l|lll|ll|}
\hline
 & $g$ & & & $\sigma g$ & \\
\hline
$g$ & $N_g$ & $\mathrm{deg}_W(g)$  & Case & $N_{\sigma g}$ & Case\\
\hline
 $e$ & 7 & 3 & I & 5 & I*\\
 $J_1$ & 4 & 2 & II & 4 & II*\\
 $J_1^2$ & 5 & 3 & III & 3 & III*\\
 $J_1^3$ & 4 & 4 & II & 4 & II*\\
 \hline
 $J_2$ & 3 & 1 & IV  & 3 & IV*\\
 $J_1J_2$ & 0 & 0 & V & 2 & V*\\
 $J_1^2J_2$ & 1 & 1 & IV & 1 & VI*\\
 $J_1^3J_2$ & 0 & 2 & V & 2 & V*\\
 \hline
 $J_2^2$ & 4 & 2 & VII & 2 & VII*\\
 $J_1J_2^2$ & 1 & 1 & VI* & 1 & VI\\
 $J_1^2J_2^2$ & 2 & 2 & V* & 0 & V\\
 $J_1^3J_2^2$ & 1 & 3 & VI* & 1 & VI\\
 \hline
 $J_2^3$ & 3 & 3 & IV & 3 & IV*\\
 $J_1J_2^3$ & 0 & 2 & V & 2 & V*\\
 $J_1^2J_2^3$ & 1 & 3 & VI & 1 & VI*\\
$J_1^3J_2^3$ & 0 & 4 & V & 2 & V*\\
\hline
$J_2^4$ & 4 & 4 & VII & 2 & VII*\\
$J_1J_2^4$ & 1 & 3 & VI* & 1 & VI\\
$J_1^2J_2^4$ & 2 & 4 & V* & 0 & V\\
$J_1^3J_2^4$ & 1 & 5 & VI* & 1 & VI\\
\hline
$J_2^5$ & 3 & 5 & IV & 3 & IV*\\
$J_1J_2^5$ & 0 & 4 & V & 2 & V*\\
$J_1^2J_2^5$ & 1 & 5 & VI & 1 & VI*\\
$J_1^3J_2^5$ & 0 & 6 & V & 2 & V*\\
\hline
\end{tabular}$$

\begin{itemize}
\item \textbf{Case I.} $W\vert_g = X^2 + Y^4 + Z^4 + x^2 + y^6 + z^6 + w^6$. For $e$, the fixpoint set is the whole space, and we have $$\mathcal{H}_{\mathrm{FJRW}}^{\mathrm{I}} = \mathcal{Q}_{W_{\mathrm{I}}}\cdot \mathrm{d} X \wedge \mathrm{d} Y\wedge\mathrm{d} Z\wedge\mathrm{d} x\wedge\mathrm{d} y\wedge\mathrm{d} z\wedge\mathrm{d} w = \mathbb{C}[X, Y, Z, x, y, z, w]/\langle X, Y^3, Z^3, x, y^5, z^5, w^5\rangle.$$ This has generators $$Y^bZ^cy^ez^fw^g\mathrm{d}Y\wedge\mathrm{d}Z\wedge\mathrm{d}y\wedge\mathrm{d}z\wedge\mathrm{d}w,$$ for $0 \le b, c\le 2$ and $0 \le e, f, g\le 4.$ These are all invariant under $\sigma$. We find that such a form is invariant under $J_1$ iff $4 \, \vert \, b+c$ and under $J_2$ iff $6 \, \vert \, e+f+g$. We find that $\mathcal{H}_{\mathrm{FJRW}}^{\mathrm{I}} \cong \mathbb{C}^{42}$.
\item \textbf{Case I*.} $W\vert_g = Y^4 + Z^4 + y^6 + z^6 + w^6$. The standard generators of the Milnor ring are given by $$Y^bZ^cy^ez^fw^g\mathrm{d}Y\wedge\mathrm{d}Z\wedge\mathrm{d}y\wedge \mathrm{d}z\wedge\mathrm{d}w,$$ such that $4\,\vert\, b+c+2$ and $6 \, \vert \, e+f+g + 3$. This gives $3 \times 20$ possibilities, so that $\mathcal{H}_{\mathrm{FJRW}}^{\mathrm{I}*} \cong \mathbb{C}^{60}$.
\item \textbf{Cases II, II*, IV, IV*, VI, VI*.} Here $W_g$ has only one $X^2$ or $x^2$ appearing, and so to be invariant under $\sigma$ we would require the forms over $\mathbb{C}$ to be 0. Thus these cases do not contribute to $\mathcal{H}_{\mathrm{FJRW}}$.
\item \textbf{Cases III, V*, VII.} These $W_g$ have only one of $Y^4+Z^4$ or $y^6+z^6+w^6$ appearing (WLOG $Y^4+Z^4$), but also have $X^2+x^2$ appearing. Thus, invariance under both $J_2$ (otherwise $J_1$) will require the forms to be zero. Again, there is no contribution.
\item \textbf{Case III*.} Here $W\vert_g = y^6 + z^6 + w^6$, with Milnor generators $y^ez^fw^g\mathrm{d}y\wedge\mathrm{d}z\wedge\mathrm{d}w$. We require $6 \vert e + f + g + 3$, giving $\mathcal{H}_{\mathrm{FJRW}}^{\mathrm{III}*} \cong \mathbb{C}^{20}$.
\item \textbf{Case V.} These are the narrow sectors, each isomorphic to $\mathbb{C}$.
\item \textbf{Case VII*.} $W_g = Y^4+ Z^4$ Similarly to case III*, we find that $\mathcal{H}_{\mathrm{FJRW}}^{\mathrm{VII}*} \cong \mathbb{C}^3. $
\end{itemize}

Putting this together and sorting by bi-degree, we find that $$\bigoplus_{g \in G} \mathcal{H}_{\mathrm{FJRW}}^{g} \cong \mathcal{H}_{\mathrm{GW}}(\mathcal{Y})$$ as desired.

We similarly find that $\mathcal{H}_{\mathrm{FJRW}}^{\mathrm{nar}} \cong \mathcal{H}_{\mathrm{GW}}^{\mathrm{amb}}$. The narrow sectors have generators corresponding to $g \in G$ for which $\mathrm{Fix}(g) = \{0\}$ (those of type $V$ above). Ordered by degree, these are: $$ \begin{matrix}
J_1J_2\\
J_1^3J_2, \, J_1J_2^3, \, \sigma J_1^2J_2^2\\
J_1^3J_2^3, \, J_1J_2^5, \, \sigma J_1^2 J_2^4\\
J_1^3J_2^5
\end{matrix}$$

%For $(2, 1, 1)$ the possible $W\vert_{\mathrm{Fix}(h)\cap \{[X, Y, Z]\}}$ are $X^2+Y^4 + Z^4$ for $e$, $X^2$ for $J_1^2$, and $0$ for $J_1, J_3$.  For the $\sigma$-twisted part, we have $Y^4
%For $(6, 3, 2, 1)$ the possibilities are 

\subsection{The Narrow Sectors}

In general, if $w_0, w_1, w_2, w_3$ are all relatively prime, our group has elements $\sigma^tJ_1^rJ_2^s$, for $t = 0, 1, \; 0 \le r \le 3, \; 0 \le s \le  d-1$. We wish to find the narrow sectors, which correspond to those group elements $h$ for which  $\Theta_k(h) \ne 0$ for all $k$. First we consider the case where $t = 0$. Then we must have $r = 1, 3$, and $s$ must not be divisible by any $\frac{d}{w_i}$. For each i, there are $w_i-1 = (\frac{d}{d/w_i}-1)$ values of $s$ which are divisible by $d/w_i$ not counting 0 (giving broad sectors), and these are the ones we exclude. The Gorenstein condition for the K3 surface tells us that $d : = \mathrm{gcd}(w_0, w_1, w_2, w_3) = \sum_i w_i$, so it follows that there are in total $3 = d - \sum_{i=0}^3(\frac{d}{d/w_i} - 1) -1$ possible $s$. (Two of these will always be precisely $s = 1, d-1$; from the bounds given by the degree formula, these are the only ones which can ever have degree 0 or 6 respectively.) This gives at least 6 sectors, as above. If $t = 1$, then $r = 2$, and we require $s$ to be even, and then we argue similarly, but just for $w_1, w_2, w_3$: this gives two possible $s$ giving narrow sectors. The sectors $\sigma J_1^2 J_2^2$ and $\sigma J_1^2 J_2^{d-2}$ are included among these, so this completes the set.

However, when $w_i, w_j$ have a common factor $p_{i, j}$, we note that there are $p_{i, j}-1$ broad sectors corresponding to $k\frac{d}{p_{i, j}}$ for $k = 1, \ldots, p_{i, j} -1$  which have been excluded twice in the above method, since they are divisible by both $\frac{d}{w_i}$ and $\frac{d}{w_j}$., So to avoid double-counting, we must exclude $p_{i, j}-1$ fewer sectors above when $t=0$; that is, we add another $p_{i, j}-1$ possible $s$. All of the corresponding $J_1J_2^s$ have degree 2 (that is, excluding $s = 1, d-1$), and all the corresponding $J_1^3J_2^s$ have degree 4. Furthermore, $J_1J_2^s$ is inverse to $J_1J_2^{d-r}$ with respect to the FJRW-pairing. 

When $t=1$, for the sectors to be narrow we require $s$ to be even. Thus for each $i$, we must find $k$ such that $2k(\frac{w_i}{d}) = k\frac{w_i}{w_0}$ is not an integer for any $i$. We count  first those which give integers for some $i$. Since $w_1 + w_2 + w_3 = w_0 = \frac{d}{2}$, it follows that each $w_i = p_{0, i}$ (either 1 or a prime) or $2p_{0, i}$. If $w_j = p_{0, j}$ (in which case $w_j \vert w_0$), or $w_j = 2p_{0, j}$ and $w_i\vert w_0$, then $k\frac{w_j}{w_0}$ is an integer for any $k$: there are then $\frac{w_0}{w_j}$ possible $k$ for each $w_i$, $i = 1, 2, 3$, and if they are all relatively prime, then this gives a total of $w_0 - \sum_{i=1}^3(w_i-1) -1 = 2$ narrow sectors accounting for $k = 0$.   If $w_j = 2p_{0, j} \not\vert w_0$, then $k\frac{w_j}{w_0}$ is only an integer for \textit{even} $k$: thus we have  $p_{0, j}$ fewer broad sectors, so for each such case we must add a further $p_{0, j}$ broad sectors. If $w_i, w_j$ share a common factor for $i = 1, 2, 3$, then that common factor is 2, and it follows that this situation adds no further narrow sectors. Alternatively one may just check the $\Theta$ values of $\sigma J_1^2J_2^s$ for the two cases $(5, 2, 2, 1)$ and $(21, 14, 6, 1)$ to see that these do contribute $2 + 1 + 1$ and $2 + 7 + 3$ narrow sectors with $t = 1$, respectively.

From the FJRW degree formula for the narrow sectors, with $N = 6$, we see that the degree is clearly even,  can only be 0 for $J_1J_2$, and therefore 6 only for $(J_1J_2)^{-1}$. Otherwise since $\mathrm{deg}_W(h) = 6 - \mathrm{deg}_W(h^{-1})$, for the rest the degree is equi-distributed between 2 and 4.

Ordering by degree, the narrow FJRW state space is then generated by sectors coming from
\begin{itemize}
\item $J_1J_2$
\item $J_1^3J_2, \sigma J_1^2J_2^2,  1+ \sum_{i, j: \mathrm{gcd}(i, j) >1}(p_{i, j} - 1)$ sectors of the form $J_1J_2^s$,\\
 $\frac{1}{2}\sum_{w_j \not\vert w_0} p_{0, j}$ sectors of the form $\sigma J_1^2J_2^s$
\item $J_1J_2^{d-1}, 1+ \sum_{i, j: \mathrm{gcd}(i, j) >1}(p_{i, j} - 1)$ sectors of the form $J_1^3J_2^s, \sigma J_1^2 J_2^{d-2},$\\
 $\frac{1}{2}\sum_{w_j \not\vert w_0} p_{0, j}$ sectors of the form $\sigma J_1^2J_2^s$
\item $J_1^3J_2^{d-1}$
\end{itemize}

For example, for $(w_0, w_1, w_2, w_3) = (5, 2, 2, 1)$ we have narrow sectors coming from
$$ \begin{matrix}
J_1J_2\\
J_1^3J_2,, J_1J_2^3, J_1J_2^7, \sigma J_1^2J_2^2, \sigma J_1^2J_2^6, \\
J_1J_2^9, J_1^3J_2^3, J_1^3 J_2^7, \sigma J_1^2 J_2^8, \sigma J_1J_2^4 \\
J_1^3J_2^9
\end{matrix}$$

Here, $J_1J_2$ is the unit of our product structure $\mathbf{1}$ (and, in fact, carries all the \textit{unstable} enumerative data). 

For $(3, 1, 1, 1)$, things simplify yet further. The cup product can be found entirely by three facts, all found in \cite{FJR}:
\begin{enumerate}
\item $\langle \alpha \cup \beta, \mathbf{1}\rangle = \langle \alpha \cdot \beta \cdot \mathbf{1} \rangle = \eta(\alpha, \beta)$;
\item For $\langle \alpha \cdot \beta \cdot \gamma \rangle = \eta (\alpha \cup \beta, \gamma)$ to be non-zero, we need their degrees to sum to 6;
\item For the previous condition to hold we need all the line bundles to be integral. For three-point FJRW-invariants this implies $q_j - \sum_{i=1}^3(\Theta_i(h_j) \in \mathbb{Z}$.
\end{enumerate}
All non-zero 3-point invariants (and hence the cup product) are determined from the pairing except for two: $\langle \phi_{\sigma J_1^2J_2^2} , \phi_{\sigma J_1^2 J_2^2}, \phi_{J_1J_2^3}\rangle$ and $\langle \phi_{J_1^3J_2}, \phi_{J_1J_2^3}, \phi_{J_1J_2^3}\rangle$, which we have freedom to set to 1. In terms of the cup product, this is exactly the ring structure of Chen-Ruan cohomology under the identification 
\begin{align*}
& D_E  \mapsto \phi_{J_1^3J_2}\\
& D_K \mapsto \phi_{J_1J_2^3}\\
& \Sigma\mathbf{1}_{\sigma} \mapsto \phi_{\sigma J_1^2J_2^2}
\end{align*}

\subsection{The FJRW I-function}

In genus zero, the virtual class takes a simple form.
\begin{lem}\label{vir}
In genus zero, we have
$$[\mathcal{W}_{0,n}^{(W, G)}]^{\mathrm{vir}} = R^1\pi_*(\bigoplus_{k=1}^7 \mathcal{L}_k)$$
\end{lem}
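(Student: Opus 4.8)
The plan is to start from the general description of the virtual class recorded above, namely $-R\pi_*(\bigoplus_{k=1}^{7}\mathcal{L}_k)^{\vee}$, and to show that in genus zero the zeroth derived pushforward of each $\mathcal{L}_k$ vanishes, so that the whole complex collapses onto the obstruction sheaf. Concretely, for the universal curve $\pi\colon\mathcal{C}\to\mathcal{W}_{0,n}^{(W,G)}$ the complex $R\pi_*\mathcal{L}_k$ is perfect of amplitude $[0,1]$, represented fibrewise by $[H^0(C,\mathcal{L}_k)\to H^1(C,\mathcal{L}_k)]$. The entire content of the lemma is the concavity statement $R^0\pi_*\mathcal{L}_k=\pi_*\mathcal{L}_k=0$: granting it, $R\pi_*\mathcal{L}_k$ is quasi-isomorphic to the locally free sheaf $R^1\pi_*\mathcal{L}_k$ placed in degree one, and summing over $k$ the expression $-R\pi_*(\bigoplus_k\mathcal{L}_k)^{\vee}$ reduces, up to the standard FJRW duality convention, to the genuine vector bundle $R^1\pi_*(\bigoplus_{k=1}^{7}\mathcal{L}_k)$, which is precisely the obstruction bundle computing the virtual cycle.

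The heart of the argument is therefore the vanishing of $H^0(C,\mathcal{L}_k)$ for every $W$-spin orbicurve $C$ in the genus-zero moduli space. I would verify this by reduction to a degree computation and then to the normalisation. On a smooth genus-zero orbicurve with $n$ narrow markings, the degree of the pushforward $|\mathcal{L}_k|$ to the coarse curve equals $q_k(2g-2+n)-\sum_{i=1}^{n}\Theta_k(h_i)=q_k(n-2)-\sum_{i}\Theta_k(h_i)$. Since we restrict to narrow decorations, in the Fermat case each $\Theta_k(h_i)$ is a nonzero multiple of $q_k=1/a_k$, hence $\Theta_k(h_i)\ge q_k$, giving $\deg|\mathcal{L}_k|\le q_k(n-2)-nq_k=-2q_k<0$; a negative-degree line bundle on a genus-zero curve has no sections. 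For nodal configurations I would normalise $C$ into its genus-zero components and induct on the number of nodes via the long exact sequence of $0\to\mathcal{L}_k\to\nu_*\nu^*\mathcal{L}_k\to\bigoplus_{\mathrm{nodes}}\mathbb{C}\to0$, checking that the restriction of $\mathcal{L}_k$ to each component retains negative degree so that no section can be glued across the nodes.

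Finally I would globalise the fibrewise vanishing. Since $H^0(C,\mathcal{L}_k)=0$ on every geometric fibre, cohomology and base change forces $\pi_*\mathcal{L}_k=0$ and makes $R^1\pi_*\mathcal{L}_k$ locally free of the expected rank, compatibly with base change; the stacky structure is harmless here because all statements are made after passing to the coarse pushforward. Taking the direct sum over $k=1,\dots,7$ then yields the displayed identification of the virtual class.

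I expect the main obstacle to be the component-wise degree bookkeeping on nodal stacky curves. The clean bound $\Theta_k(h_i)\ge q_k$ is guaranteed at the genuine marked points by the narrow hypothesis, but the nodes carry balanced monodromies $\Theta_k$ and $1-\Theta_k$ on the two branches, and a node at which $\Theta_k$ vanishes contributes nothing to the subtracted term. One must confirm that even so every genus-zero irreducible component keeps a negative-degree restriction of each $\mathcal{L}_k$, so that concavity cannot fail on some boundary stratum; establishing this uniformly over the whole boundary of $\mathcal{W}_{0,n}^{(W,G)}$, rather than on a single smooth fibre, is the delicate point.
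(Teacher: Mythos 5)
Your reduction to the concavity statement $\pi_*\mathcal{L}_k=0$ and your treatment of the irreducible case are exactly the paper's: the degree formula plus the narrow hypothesis $\Theta_k(h_i)=m_{i,k}q_k$ with $m_{i,k}\ge 1$ gives $\deg\lvert\mathcal{L}_k\rvert\le -2q_k<0$, so there are no sections. But your nodal argument has a genuine gap, and it sits precisely at the point you flag in your last paragraph. You propose to conclude by checking that ``the restriction of $\mathcal{L}_k$ to each component retains negative degree,'' and you say one must confirm that every genus-zero component keeps a negative-degree restriction. That statement is false in general: on a component $v$ carrying $n_v$ nodes, the node monodromies can vanish and then contribute nothing to the subtracted term, so the best uniform bound is $\deg(\mathcal{L}_k\vert_v)\le q_k(n_v-2)$, which is nonnegative as soon as $n_v\ge 2$ and can be a positive integer once $n_v\ge 3$ (e.g.\ $q_k=\tfrac12$ and $n_v=5$ allows degree $1$). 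So concavity cannot be established component by component, and the induction you set up via the normalisation sequence has nothing to run on.

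The paper closes this gap using the tree structure of the genus-zero dual graph rather than a sign condition on each component. The inequality it actually uses is the weaker $\deg(\mathcal{L}_k\vert_v)\le q_k(n_v-2)<n_v-1$, valid for every component. A leaf of the tree has $n_v=1$, hence negative degree, so any global section vanishes on it. Peeling leaves inductively, each new leaf is a component all but one of whose nodes attach to components on which the section is already known to vanish; the section therefore has at least $n_v-1$ zeros there while the degree is an integer strictly less than $n_v-1$, forcing it to vanish identically on that component as well. Vanishing thus propagates from the outside of the tree inward and kills the section everywhere. This is the argument you need in place of the component-wise negative-degree claim; the rest of your outline (collapsing $-R\pi_*(\bigoplus_k\mathcal{L}_k)^{\vee}$ onto $R^1$, and globalising the fibrewise vanishing by base change) is sound and agrees with the paper.
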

\begin{proof}
By definition $[\mathcal{W}_{0,n}^{(W, G)}]^{\mathrm{vir}} = (-R^0+R^1)\pi_*(\bigoplus_{k=1}^8 \mathcal{L}_k),$ and over each point $(\mathcal{C}, p_1, \ldots, p_n, \mathcal{L}_1, \ldots, \mathcal{L}_N, \varphi_1, \varphi_2, \ldots, \varphi_N)$, the  fibre of $R^0\pi_*(\bigoplus_{k=1}^N \mathcal{L}_k)$ is given by $\bigoplus_{k=1}^N H^0(\mathcal{C}, \mathbb{L}_k)$. It is thus sufficient to show that, for all $k$, $\mathbb{L}_k$ has no non-trivial global sections.

We proceed by induction on the dual graph $\Gamma$ of $\mathcal{C}$. Vertices of $\Gamma$ correspond to irreducible components of $\mathcal{C}$, and edges correspond to nodes. Each vertex $v$ is marked by $g_v$, the genus of the component, and $S_v$, the set of marked points or nodes, which has cardinality $k_v$.

From \cite{FJR}, the degree of the pushforward of $\mathcal{L}_i$ to the coarse curve is $$\mathrm{deg}\, (\vert \mathcal{L}_k \vert ) = q_k (k_v-2) - \sum_{i=1}^n \Theta_k(h_i),$$ where $h_i$ records the multiplicity of $\mathcal{L}_i$ at the marked points. For the narrow sectors, $\Theta_k(h_i) > 0$ for all $i, k$. If $\mathcal{C}$ is irreducible (so there are no nodes) we have $\Theta_k(h_i) = m_{i, k}q_k$ for $m_{i, k} \in \mathbb{Z}_{>0}$, and so $$\mathrm{deg}(\vert \mathcal{L}_k \vert) = -2q_k + q_k \sum_{i=1}^n (1-m_{i, k}) < 0,$$ so that $H^0(\mathcal{C}, \mathcal{L}_k) = 0$.
 
Otherwise, since $\mathcal{C}$ is a compact connected curve of genus zero, $\Gamma$ is a finite tree where every irreducible component has at least one node. Let $\sigma_k$ be any global section of $\mathcal{L}_k$ on $\mathcal{C}$. Let $n_v$ be the number of edges (nodes) attached to $v$. Then we have $$\mathrm{deg}(\mathcal{L}_k\vert_v) \le q_k (n_v-2) < n_v-1.$$ If $v$ is a leaf of $\Gamma$ corresponding to irreducible component $\mathcal{C}_v$, then $n_v = 1$ and $\mathrm{deg}(\mathcal{L}_k\vert_v) < 0$ and so $\sigma_k\vert_v = 0$. We proceed by induction: if at each stage we remove all the leaves and consider the leaves of the new graph we obtain, these correspond to vertices of the original graph all of whose adjacent vertices but one have been removed earlier; that is, to components all of whose nodes but one are connected to components on which $\sigma$ is known to be zero. We must show that $\sigma$ is zero on all the leaves. This follows from the the above inequality, as $\sigma$ has more zeroes on the given component than its degree there, so must be constantly zero. The result follows.
\end{proof}

\begin{dfn}
For $h \in G$, let $i_k(h) = \langle \Theta_k(h) - q_k\rangle$.
\end{dfn}
Note that since each $k$ such that $\Theta_k(h) = 0$ increases $N$ by 1, we have  $\mathrm{deg}_W(h) = 2\sum_k i_k(h)$.

%string equation
%J-function definition

%We are interested in the enumerative geometry of $\mathcal{Y}$, rather than the ambient space $\mathcal{X}$. There is a virtual bundle whose Euler class gives us $\mathcal{Y}$.
We have already discussed the twisted variants in \S 2. These correspond to a full twisted FJRW theory, which corresponds to considering a slightly different set of line bundles. Intuitively, we may separate out the line multiplicities by the greatest common divisors of their multiplicities at the points; accounting for the fact that  $d$-th roots may be lower degree roots too. First, note that over marked curves, to give the $n$-th root $\mathcal{L}$ of $\omega_{\mathrm{log}}$ is equivalent to giving an $n$-th root $\tilde{\mathcal{L}}$ of $\omega_{\mathrm{log}}(-D)$ for some divisor $0 \le D < \sum_i nD_i$, with multiplicity 0 at $p_i$. This correspondence can be given by  $\mathcal{L} \mapsto p^*p_* \mathcal{L}$, where $p$ is the map which forgets the monodromy at the marked points. Then from this perspective we can separate our narrow sectors from the rest in a slightly modified moduli space $\tilde{\mathcal{W}}_{g, n}.$ This can be constructed for one line bundle at time, for each component corresponding to a series of multiplicities $\mathbf{h}$, and then given in full as a union of all possible fibre products of such moduli spaces across all 7 line bundles, satisfying conditions specified by the group action. Thus the moduli space decomposes into a disjoint union of moduli spaces $\tilde{\mathcal{W}}(\mathbf{h})$. We will not dwell on the full machinery here, but see \cite{FJR} for details.

We wish to show that considering these modified line bundles (and thus defining our invariants over this twisted moduli space instead) removes the issue of broad sectors in a clean way for the twisted invariants in genus zero.

\begin{lem}\textbf{(Ramond Vanishing)}
Over $\tilde{\mathcal{W}}_{0, n}(\mathbf{h})$, we have that $\pi_*(\sum_{k=1}^7 \mathcal{L}_k) = 0$ and $R^1\pi_*(\bigoplus_{k=1}^7 \mathcal{L}_k)$ is locally free.
\end{lem}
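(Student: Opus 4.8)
The plan is to reduce both assertions to a single fibrewise statement — that $H^0(\mathcal{C},\mathcal{L}_k)=0$ for every $k$ and every geometric fibre $\mathcal{C}$ of the universal curve $\pi$ over $\tilde{\mathcal{W}}_{0,n}(\mathbf{h})$ — and then read off the lemma by cohomology and base change. Indeed, once this vanishing is known on all fibres, the pushforward $\pi_*(\bigoplus_{k=1}^7\mathcal{L}_k)=\bigoplus_k\pi_*\mathcal{L}_k$ is itself zero; and because $\pi$ is a flat proper family of nodal curves with each $\mathcal{L}_k$ flat over the base, the Euler characteristic $\chi(\mathcal{C},\mathcal{L}_k)=h^0-h^1$ is locally constant, so $h^1(\mathcal{C},\mathcal{L}_k)$ is locally constant (equal to $-\chi$). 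By Grauert's theorem (the theorem on cohomology and base change) this forces $R^1\pi_*\mathcal{L}_k$ to be locally free, and summing over $k$ yields the second assertion. Thus everything hinges on the fibrewise vanishing.

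To prove that vanishing I would follow the dual-graph induction already used in Lemma \ref{vir}, the essential new feature being that we now work over the twisted moduli space. Recall that on $\tilde{\mathcal{W}}_{0,n}(\mathbf{h})$ each root $\mathcal{L}_k$ of $\omega_{\mathrm{log}}$ is presented via the root $\tilde{\mathcal{L}}_k=p^*p_*\mathcal{L}_k$ of $\omega_{\mathrm{log}}(-D)$ with multiplicity $0$ at every marked point, where $p$ forgets the monodromy at the markings. For a fixed $k$ and a genus-zero fibre $\mathcal{C}$ with dual tree $\Gamma$, the coarse degree on a component $v$ carrying $n_v$ special points satisfies $\deg(|\mathcal{L}_k|_v)\le q_k(n_v-2)$, which is strictly less than $n_v-1$ since $0<q_k<1$. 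On a leaf $n_v=1$ and this degree is negative, so any global section $\sigma_k$ vanishes there. Removing leaves from $\Gamma$ successively, a section on a newly exposed leaf is forced to vanish at all but one of its nodes by the earlier stages, giving it at least $n_v-1$ zeroes while its degree is $<n_v-1$; hence it vanishes identically. Since $\Gamma$ is a finite tree this terminates with $\sigma_k\equiv 0$, i.e. $H^0(\mathcal{C},\mathcal{L}_k)=0$.

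The main obstacle — and the reason the statement is billed as ``Ramond vanishing'' rather than a direct corollary of Lemma \ref{vir} — is the presence of Ramond points, that is, indices $k$ and markings $i$ with $\Theta_k(h_i)=0$. For such $k$ the naive bundle $\mathcal{L}_k$ has integral degree at $p_i$ and can carry sections, so the positivity that drove Lemma \ref{vir} (where narrowness forced $\Theta_k(h_i)>0$ throughout) is lost. The crux is therefore to verify that passing to $\tilde{\mathcal{W}}_{0,n}(\mathbf{h})$ genuinely repairs this: that the root $\tilde{\mathcal{L}}_k$ of $\omega_{\mathrm{log}}(-D)$ with its prescribed vanishing at the markings, for an appropriate effective divisor $D$, still obeys $\deg(|\tilde{\mathcal{L}}_k|_v)<n_v-1$ on every component, so that no Ramond marking can absorb a forced zero and obstruct the leaf-removal induction. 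I expect the delicate points to be the node-by-node bookkeeping of the degree shift induced by the twist $-D$, and confirming that this construction is constant in families over the whole stratum $\tilde{\mathcal{W}}_{0,n}(\mathbf{h})$, so that the constancy of $h^0$ underlying the base-change step is legitimate. Granting these, the induction of the previous paragraph and the Grauert argument of the first close the proof.
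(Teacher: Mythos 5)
Your reduction of the lemma to the fibrewise statement $H^0(\mathcal{C},\mathcal{L}_k)=0$ is exactly the step that fails, and it fails at the point you yourself flag as ``delicate.'' The twist defining $\tilde{\mathcal{L}}_k$ subtracts $d\,\Theta_k(h_i)$ times the $i$-th marking divisor, so at a Ramond marking ($\Theta_k(h_i)=0$) it subtracts nothing: each Ramond marking contributes $+q_k$ to $\deg\vert\tilde{\mathcal{L}}_k\vert$ instead of the non-positive $q_k-\Theta_k(h_i)$ of the narrow case, and with enough broad insertions the degree becomes non-negative even on a smooth fibre. Concretely, in the paper's own example take $\mathbf{h}=(J_1\sigma,\,J_1\sigma,\,J_2,\,J_2)$: all seven coarse degrees are integers (they are $1,0,0,-1,0,0,0$), so $\tilde{\mathcal{W}}_{0,4}(\mathbf{h})$ is non-empty, and since $\Theta_X(h_i)=0$ for every $i$, on a smooth four-pointed fibre $\vert\mathcal{L}_X\vert$ is a square root of $\omega_{\mathrm{log}}\cong\mathcal{O}_{\mathbb{P}^1}(2)$, i.e.\ $\mathcal{O}(1)$ with $h^0=2$; likewise $\vert\mathcal{L}_Y\vert\cong\mathcal{O}$ with $h^0=1$. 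So the pushforward does not vanish and the Grauert argument has nothing to stand on. Note also that the bound you propose to verify, $\deg(\vert\tilde{\mathcal{L}}_k\vert_v)<n_v-1$, is not the operative condition: it holds in this example ($1<3$) while $H^0\neq 0$. What the leaf-removal induction really needs is strictly negative degree on its base components (irreducible fibres and leaves), and that is precisely what Ramond markings destroy — a leaf carrying a Ramond marking can have degree $0$, and its constant sections are not forced to vanish at the node.

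The paper's treatment of the broad case rests on a mechanism your proposal has no counterpart for: the short exact sequence $0\to\tilde{\mathcal{L}}_k\to\tilde{\mathcal{L}}_k(D_i)\to\tilde{\mathcal{L}}_k(D_i)\vert_{D_i}\to 0$ at a Ramond marking $p_i$, whose pushforward yields $0\to\pi_*\bigl(\tilde{\mathcal{L}}_k(D_i)\vert_{D_i}\bigr)\to R^1\pi_*\tilde{\mathcal{L}}_k\to R^1\pi_*\tilde{\mathcal{L}}_k(D_i)\to 0$. Local freeness of $R^1\pi_*\tilde{\mathcal{L}}_k$ is read off from this extension (not from cohomology-and-base-change applied to $\tilde{\mathcal{L}}_k$ itself), and — the actual content of ``Ramond vanishing'' — since $\tilde{\mathcal{L}}_k(D_i)\vert_{D_i}\cong\mathcal{L}_k\vert_{D_i}$ is a root of the canonically trivial bundle $\omega_{\mathrm{log}}\vert_{D_i}$, the sub-line-bundle has vanishing first Chern class, whence $c_{\mathrm{top}}(R^1\pi_*\tilde{\mathcal{L}}_k)=0$ and the broad insertions decouple from the twisted invariants. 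Your dual-graph induction, even with flawless bookkeeping, can at best reprove the all-narrow case (which is Lemma \ref{vir}, and which the paper's proof dismisses as clear) and, with care about rooting the tree at the broad marking, the case of a single Ramond insertion; it cannot produce the extension structure or the Chern-class vanishing, and for general broad $\mathbf{h}$ the statement it aims at is simply false.
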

\begin{proof}
This is clear if all sectors in $\mathbf{h}$ are narrow. Assume there is a broad sector $h_i$. Then  $\Theta_k(h_i) = 0$ for some $k$. The short exact sequence $$0 \to \tilde{\mathcal{L}}_k \to \tilde{\mathcal{L}}_k(D_i) \to \tilde{\mathcal{L}}_k(D_i) \vert_{D_i} \to 0$$ induces a long exact sequence
\begin{align*}
0 & \to \pi_*\tilde{\mathcal{L}}_k \to \pi_* \tilde{\mathcal{L}}_k(D_i) \to \pi_* \tilde{\mathcal{L}}_k(D_i) \vert_{D_i}\\
& \to R^1\pi_*\tilde{\mathcal{L}}_k \to R^1\pi_* \tilde{\mathcal{L}}_k(D_i) \to R^1\pi_* \tilde{\mathcal{L}}_k(D_i) \vert_{D_i}.
\end{align*}
The last term is zero by dimension considerations, and the first two terms are zero by a similar argument to lemma \ref{vir}, but with one extra subtlety - if $\mathcal{C}$ is reducible and we follow through the same argument for the irreducible component $\mathcal{C}_v$ containing $p_i$, we may conclude that $\mathrm{deg}\tilde{\mathcal{L}}_k(D_i)\vert_{\mathcal{C}_{p_i}} < n_v$ (the number of nodes on $\mathcal{C}_v$), not $<n_v - 1$. But since in our inductive step we are considering \textit{non}-tails it must be connected to at least two other components, and since we can prove this step for all other components it follows for $C_v$ too.
Thus our long exact sequence is reduced to 
$$0 \to \pi_* \tilde{\mathcal{L}}_k(D_i) \vert_{D_i} \to R^1\pi_*\tilde{\mathcal{L}}_k \to R^1\pi_* \tilde{\mathcal{L}}_k(D_i) \to 0,$$
and so $$c_{\mathrm{top}}(R^1\pi_*\tilde{\mathcal{L}}_k) = c_{\mathrm{top}}(\pi_* \tilde{\mathcal{L}_k}(D_1)\vert_{D_1})\cdot c_{\mathrm{top}}(R^1\pi_*\tilde{\mathcal{L}}_k(D_1)).$$
Note that $\tilde{\mathcal{L}}_k(D_i)\vert_{D_i} \cong \mathcal{L}_k\vert_{D_i}$, and since by assumption our multiplicity is zero here, this is a root of $\omega_{\mathrm{log}}\vert_{D_1}$, and this is trivial, we have that $c_{\mathrm{top}}(\pi_*(\tilde{\mathcal{L}}_k(D_1)\vert_{D_1}) = 0$, and so $c_{\mathrm{top}}(R^1\pi_*\tilde{\mathcal{L}}_k) = 0$.
\end{proof}

The derivation of the I-function  will follow similar reasoning to that of the analogous theorem in \cite{ChiRu1}. First, we compute the untwisted J-function. For \begin{equation*}\label{eq:fjrwinv}\langle \phi_{h_1}, \ldots, \psi^a\phi_h\rangle_{0,n+1}^{\mathrm{untwisted}} = \int_{[\mathcal{W}_{0, n, +1, G}(h_1, \ldots, h_n, h)]^{\mathrm{vir}}} \psi^{a},\end{equation*} to be non-zero we require the moduli space to be non-empty, which when all sectors are narrow requires that $$-2q_k +\sum_i i_k(h_i) = i_k(h)$$ be an integer, so that $i_k(h) = \langle 2q_k - \sum_{i=1}^n i_k(h_i) \rangle$. Accordingly, for $\mathbf{n} = (n_1, n_2, \ldots, n_n)$ we define $h_{\mathbf{n}}$ by $i_k(h_{\mathbf{n}}) = \langle -2q_k + \sum_{i=1}^n i_k(h_{n_i}) \rangle$. We further require that the degree of $\psi^a$  matches the dimension of the moduli space; that is, $a = n-2$. In this case, by the string equation proved in \cite{FJR}, we find that \eqref{eq:fjrwinv} is 1.

Therefore $$J^{\mathrm{un}}(\mathbf{t}, z) = \sum_{\mathbf{n} : G \to \mathbb{N}_0} \frac{1}{z^{\vert\mathbf{n}\vert-1} }\prod_{h \in G} \frac{(t^h)^{\mathbf{n}(h)}}{\mathbf{n}(h)!}\phi_{h_{\mathbf{n}}},$$
where we label the term in the sum corresponding to $\mathbf{n}$ by $J^{\mathrm{un}}_{\mathbf{n}}(\mathbf{t}, z)$.

We define a \textit{twisting map} $\Delta:\mathcal{V}^{\mathrm{un}} \to \mathcal{V}^{\mathrm{tw}}$ which takes the \textit{untwisted} Lagrangian cone $\mathcal{L}^{\mathrm{un}}$ defined via the untwisted invariants to the \textit{twisted} Lagrangian cone $\mathcal{L}^{\mathrm{tw}}$ defined similarly via the twisted invariants.
\begin{prop}
If we set $$\Delta = \bigoplus_h \prod_{k=1}^7 \mathrm{exp}(\sum_{d\ge0} s_d \frac{B_{d+1}(i_k(h) + q_k)}{(d+1)!}z^d),$$ where $B_d(x)$ is the $d$-th Bernoulli polynomial, then $\Delta$ is a symplectic transformation and $\Delta(\mathcal{L}^{\mathrm{un}}) = \mathcal{L}^{\mathrm{tw}}$.
\end{prop}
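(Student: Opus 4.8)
The plan is to follow the quantum Riemann--Roch strategy of Coates--Givental, in the orbifold form developed by Tseng and the spin-curve form of Chiodo used in \cite{ChiRu1}, treating the passage from the untwisted to the twisted theory as a flow in the twisting parameters $s_d$. The first step is to observe that the statement only makes sense once the twisting class is that of an honest vector bundle: by Lemma \ref{vir} the genus-zero virtual class is $R^1\pi_*(\bigoplus_k \mathcal{L}_k)$, and by the Ramond vanishing lemma $\pi_*(\bigoplus_k \mathcal{L}_k) = 0$ with $R^1\pi_*(\bigoplus_k \mathcal{L}_k)$ locally free even on the broad strata we encounter. Hence the class $\exp(\sum_d s_d \operatorname{ch}_d(R^1\pi_*\bigoplus_k\mathcal{L}_k))$ by which we twist is the multiplicative class of a genuine bundle, and $\mathcal{L}^{\mathrm{tw}}$ depends analytically on $\mathbf{s}=(s_0, s_1, \dots)$ with $\mathcal{L}^{\mathrm{tw}}$ specializing to $\mathcal{L}^{\mathrm{un}}$ at $\mathbf{s}=0$. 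This is exactly the setting in which the Coates--Givental comparison theorem applies.

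The main computation is to differentiate in $s_d$ and evaluate via Grothendieck--Riemann--Roch. One writes $\partial_{s_d}$ of the twisted potential in terms of insertions of $\operatorname{ch}_d(R^1\pi_*\mathcal{L}_k)$, and applies Chiodo's GRR formula for the Chern character of the derived pushforward of the universal line bundles on the moduli of $W$-spin orbicurves. That formula produces three kinds of terms: marked-point contributions carrying a factor $B_{d+1}(\Theta_k(h_i))$ at the $i$-th point; an interior ($\psi$/$\kappa$) contribution on the main component; and nodal contributions supported on the boundary. Using $i_k(h)+q_k = \Theta_k(h)$ on the narrow sectors, the marked-point terms are precisely the logarithmic $s_d$-derivative of the diagonal operator $\bigoplus_h \prod_k \exp(\sum_{d} s_d \frac{B_{d+1}(i_k(h)+q_k)}{(d+1)!}z^d)$.

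The heart of the argument, and the step I expect to be the main obstacle, is showing that the interior and nodal GRR terms contribute nothing beyond $\Delta$. By the Coates--Givental mechanism the nodal terms, when fed back into the genus-zero potential through the splitting and topological-recursion structure of the moduli space, reassemble into exactly the quantized quadratic action of the same diagonal operator, so that the net infinitesimal motion of the cone is the linear symplectic flow generated by $\partial_{s_d}\Delta \cdot \Delta^{-1}$; integrating this flow from $\mathbf{s}=0$ yields $\Delta(\mathcal{L}^{\mathrm{un}}) = \mathcal{L}^{\mathrm{tw}}$. The care required here is to match the orbifold/spin conventions for ages and charges precisely (in particular the boundary $\Theta$-data and the $z$-grading) so that no off-diagonal or stray $z^{-1}$ terms survive; the genus-zero hypothesis together with Lemma \ref{vir} are what make this clean.

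Finally, $\Delta$ is symplectic with respect to $\Omega$ by the reflection identity $B_{d+1}(1-x) = (-1)^{d+1}B_{d+1}(x)$. Since the pairing sends $\phi_h$ to the $h^{-1}$ sector and $\Theta_k(h^{-1}) = 1 - \Theta_k(h)$, the scalar by which $\Delta(-z)$ acts on $\phi_h$ times the scalar by which $\Delta(z)$ acts on $\phi_{h^{-1}}$ has exponent proportional to $B_{d+1}(\Theta_k(h))[(-1)^d + (-1)^{d+1}]z^d = 0$; hence $\langle \Delta(-z)f, \Delta(z)g\rangle = \langle f, g\rangle$, so $\Omega$ is preserved and $\Delta$ is a symplectic transformation.
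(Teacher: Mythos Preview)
Your proposal is correct and follows essentially the same approach as the paper. Both arguments verify symplecticity via the Bernoulli reflection identity $B_{d+1}(1-x)=(-1)^{d+1}B_{d+1}(x)$ together with $i_k(h^{-1})+q_k = 1-(i_k(h)+q_k)$, and both establish $\Delta(\mathcal{L}^{\mathrm{un}})=\mathcal{L}^{\mathrm{tw}}$ by flowing in the twisting parameters $s_d$ and invoking the spin-curve Grothendieck--Riemann--Roch formula. The only difference is packaging: the paper works at the level of the quantized total descendant potential, writing down an explicit first-order operator $P_d^{(k)}$ (whose three terms correspond precisely to your dilaton/marked-point, linear, and nodal contributions), checks directly that $\hat{\Delta}\mathcal{D}^{\mathrm{un}}$ satisfies $\partial_{s_d}\Phi = \sum_k P_d^{(k)}\Phi$, and then appeals to \cite{CZ} for the matching identity on $\mathcal{D}^{\mathrm{tw}}$ before taking the semiclassical limit; you instead describe the same mechanism in the geometric language of the GRR decomposition and the Coates--Givental reassembly of boundary terms. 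These are two presentations of the same computation.
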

\begin{proof}
First, we show that $\Delta$ is symplectic. Consider $ \Omega (\Delta f, \Delta g)$ for $f, g \in \mathcal{V}^{\mathrm{un}}$. Expanding $f, g$ we note that the only non-zero terms in the product are given by pairing  terms in $\phi_h$ in $f$ with terms in $\phi_{h^{-1}}$ in $g$. We write $f = \sum_h f_h \phi_h$ and $g = \sum_h g_h \phi_h$. Note that in this case $i_k(h^{-1}) +q_k = 1-(i_k(h)+q_k)$ and since the Bernoulli polynomials satisfy $B_{d+1}(1-x) = (-1)^{d+1} B_d(x)$, it follows that $ \Omega (\Delta f, \Delta g)$ is the residue at $z=0$ of the sum given for each $h$ by 
\begin{align*}
& \langle \prod_ke^{\sum_{d\ge0} s_d \frac{B_{d+1}(i_k(h) + q_k)}{(d+1)!}z^d} f_h(z) \phi_h, \prod_ke^{\sum_{d\ge0} s_d \frac{B_{d+1}(i_k(h^{-1}) + q_k)}{(d+1)!}(-z)^d} g_{h^{-1}}(-z) \phi_{h^{-1}}\rangle\\
& = e^{\sum_{k, d} (s_d \frac{B_{d+1}(i_k(h) + q_k)}{(d+1)!}z^d +
s_d \frac{(-1)^{d+1}B_{d+1}(i_k(h) + q_k)}{(d+1)!}(-1)^dz^d)} \langle f_h(z) \phi_h, g_{h^{-1}}(-z) \phi_{h^{-1}}\rangle\\
& =  \langle f_h(z) \phi_h, g_{h^{-1}}(-z) \phi_{h^{-1}}\rangle,
\end{align*}
which gives $\mathrm{Res}_{z=0} \bigoplus_h  \langle f_h(z) \phi_h, g_{h^{-1}}(-z) \phi_{h^{-1}}\rangle = \Omega(f, g)$, as required.

The potential is defined in terms of a quantum expansion; we show that $\hat{\Delta} \mathcal{D}^{\mathrm{un}} = \mathcal{D}^{\mathrm{tw}}$, where $\hat{\Delta}$ is the quantisation of $\Delta$ (see the detailed exposition in \cite{CPS} for details). 
This is given by first forming the \textit{Hamiltonian} corresponding to $\sum_{g\ge0} \hbar^{g-1} \mathcal{F}$ by $h_F(v) = \frac{1}{2}\langle \sum_{g\ge0} \hbar^{g-1}\mathcal{F}v, v\rangle$, and then quantising the coordinates of $\mathcal{V}^{\mathrm{un}}, \mathcal{V}^{\mathrm{tw}}$ by replacing them with the operators
\begin{align*}
\hat{q}^h & = q^h,\\
\hat{p}^h & = \hbar \frac{\partial}{\partial q^h},
\end{align*} 
we obtain $\hat{h}_F$, and define $\hat{\mathcal{D}} = e^{\hat{h}_F/\hbar}$.

We show that $\hat{\Delta}\mathcal{D}^{\mathrm{un}} = \mathcal{D}^{\mathrm{tw}},$. 

Remembering the dilaton shift \eqref{dilaton}, it is straightforward to check term by term that $\hat{\Delta}\mathcal{D}^{\mathrm{un}}$ is a solution to
\begin{align*}
\frac{\partial \Phi}{\partial s_d} = & \sum_{k=0}^7 P_d^{(k)}\Phi,\\
P_d^ {(k)} & = \frac{B_{d+1}(q_k)}{(d+1)!}\frac{\partial}{\partial t^{J_1J_2}_{d+1}} - \sum_{\substack{a\ge 0\\ h \in G}} \frac{B_{d+1}(i_k(h)+q_k)}{(d+1)!} t_a^h\frac{\partial}{\partial t_{d+a}^h} \\
& + \frac{\hbar}{2} \sum_{\substack{a+a' = d-1\\h, h' \in G}}(-1)^{a'} \eta^{h,h'} \frac{B_{d+1}(i_k(h) + q_k)}{(d+1)!} \frac{\partial^2}{\partial t_a^h \partial t_{a'}^{h'}}.
\end{align*}
Here $\eta^{h, h'} = e^{s_0}\delta_{h', h^{-1}}$, the inverse of the pairing.

We wish to show that  and $\mathcal{D}^{\mathrm{tw}}$ is also a solution to the above equation; the equality will then follow. Expanding $\mathcal{D}^{\mathrm{tw}}$ and remembering the integral definition of the invariants and $\mathbf{c}(E)$, after some calculation we find that  $\mathcal{D}^{\mathrm{tw}}$ is a solution if and only if
\begin{align*}
& \sum_{n=0}^{\infty} \frac{1}{n!} \langle \mathbf{t}(\psi), \mathbf{t}(\psi), \ldots, \mathbf{t}(\psi), \mathrm{ch}_d(R\pi_*(\mathcal{L}_k))\cdot\mathbf{c}(R\pi_*(\bigoplus_l \mathcal{L}_l))\rangle_{0, n}\\
= & P_d^{(k)}\mathcal{F}_g^{\mathrm{tw}} + \frac{\hbar}{2}\sum_{\substack{a+a' = d-1\\h, h' \in G}}(-1)^{a'} \eta^{h,h'} \frac{B_{d+1}(i_k(h) + q_k)}{(d+1)!} \frac{\partial \mathcal{F}_g^{\mathrm{tw}}}{\partial t_a^h} \frac{\partial \mathcal{F}_g^{\mathrm{tw}}}{\partial t_{a'}^{h'}}
\end{align*}
Here we appeal to \cite{CZ}, where this equation was proved in some generality.

Therefore we have $\hat{\Delta}(\mathcal{D}^{\mathrm{un}}) = \mathcal{D}^{\mathrm{tw}}$. Taking the semi-classical limit $\hbar \to 0$, the conclusion follows.
\end{proof}

We set
\begin{align*}
& D_h =  t^h \frac{\partial}{\partial t_0^h},\\
& D^k =  \sum_{h \in G} i_k(h) D_h,\\
& \mathbf{s}(x) =  \sum_{d\ge0} s_d \frac{x^d}{d!},\\
& G_y(x ,z) =  \sum_{l, m \ge 0} s_{l+m-1} \frac{B_m^*(y)}{m!}\frac{x^l}{l!}z^{m-1}.
\end{align*}

Note that
\begin{align*}
&D_hJ^{\mathrm{un}}_{\mathbf{n}}(\mathbf{t}, z) = \mathbf{n}(h)J_{\mathbf{n}}^{\mathrm{un}} (\mathbf{t}, z),\\
&G_y(x, z) = G_0(x+yz, z),\\
&G_0(x+z, z) = g_0(x, z) + \mathbf{s}(x).
\end{align*}

We will build our twisted I-function from these functions, and prove that it lies on $\mathcal{L}^{\mathrm{tw}}$ as given above. We set $$J^{\mathbf{s}}(\mathbf{t}, z) =\mathrm{exp}(\sum_{k=1}^7 (-G_{q_k}(zD^k, z))).$$

Note that we may write 
\begin{align*}
J^{\mathbf{s}}(\mathbf{t}, z) & = \sum_{\mathbf{n}} \prod_{k=1}^7 \mathrm{exp}(-G_{q_k}((\sum_{h\in G} \mathbf{n}(h)i_k(h))z, z)J_{\mathbf{n}}^{\mathrm{un}}(\mathbf{t}, z),\\
\Delta & = \prod_{k=1}^7 \bigoplus_{h \in G} \mathrm{exp}(G_{q_k}(i_k(h)z, z)).
\end{align*}

\begin{lem}
$J^{\mathbf{s}}(\mathbf{t}, z)$ lies on $\mathcal{L}^{\mathrm{un}}$.
\end{lem}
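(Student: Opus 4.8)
The plan is to start from the fact that the untwisted $J$-function already lies on $\mathcal{L}^{\mathrm{un}}$ and then show that the operator converting $J^{\mathrm{un}}$ into $J^{\mathbf{s}}$ preserves the cone. The first point is essentially free: $J^{\mathrm{un}}(\mathbf{t},z)$ is by construction the $J$-function of the untwisted theory, i.e. it parametrizes the slice $\mathcal{L}^{\mathrm{un}}\cap(-\phi_0 z \oplus \mathcal{H} \oplus \mathcal{V}_-)$ via the explicit untwisted invariants computed above, so each of its values is a point of $\mathcal{L}^{\mathrm{un}}$. Using the eigenvalue relation $D_h J^{\mathrm{un}}_{\mathbf{n}} = \mathbf{n}(h) J^{\mathrm{un}}_{\mathbf{n}}$, the differential operator $\exp(-\sum_{k=1}^7 G_{q_k}(zD^k,z))$ acts diagonally on the $\mathbf{n}$-graded pieces, which is exactly the content of the first displayed rewriting of $J^{\mathbf{s}}$; hence $J^{\mathbf{s}}$ is a well-defined term-by-term rescaling of $J^{\mathrm{un}}$ in $\mathbf{n}$.

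The heart of the argument is to show that this rescaling keeps us on $\mathcal{L}^{\mathrm{un}}$. I would exploit the two functional identities recorded just above the statement: the translation property $G_y(x,z)=G_0(x+yz,z)$ and the quasi-periodicity $G_0(x+z,z)=G_0(x,z)+\mathbf{s}(x)$, the latter being a repackaging of the Bernoulli recursion $B_{m}(x+1)-B_m(x)=m\,x^{m-1}$. These identities let me interpret $\exp(-\sum_k G_{q_k}(zD^k,z))$ as the flow generated by a combination of the string and dilaton vector fields together with an overall scaling of the cone. Each ingredient preserves $\mathcal{L}^{\mathrm{un}}$: the string and dilaton equations proved for this theory in \cite{FJR} state precisely that $\mathcal{L}^{\mathrm{un}}$ is invariant under the associated infinitesimal symmetries, and scaling invariance is automatic because $\mathcal{L}^{\mathrm{un}}$ is a cone. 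Concretely, I expect $J^{\mathbf{s}}(\mathbf{t},z)$ either to coincide, up to the scalar ambiguity allowed on a cone, with $J^{\mathrm{un}}$ evaluated at a reparametrized input $\tilde{\mathbf{t}}(\mathbf{t})$ determined by the $s_d$ through $\mathbf{s}$, or, more robustly, to satisfy the tangent-space criterion $\mathcal{L}^{\mathrm{un}}\cap T_f\mathcal{L}^{\mathrm{un}} = zT_f\mathcal{L}^{\mathrm{un}}$ along the flow; in either case, since $J^{\mathrm{un}}$ lands on $\mathcal{L}^{\mathrm{un}}$ for every argument, so does $J^{\mathbf{s}}$.

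The main obstacle will be the bookkeeping forced by the nonlinearity of $G_0$ in its first slot. The rescaling factor attached to the $\mathbf{n}$-th term depends on $\mathbf{n}$ only through the scalar $\sum_{h} \mathbf{n}(h)\, i_k(h)$, which couples all the time variables and therefore cannot be distributed naively across the product $\prod_h (t^h)^{\mathbf{n}(h)}/\mathbf{n}(h)!$. Matching this scalar against the multiplicities $i_k(h_{\mathbf{n}})$ that label the output sector $h_{\mathbf{n}}$, and tracking the resulting powers of $z$ so that the filtration $T_f\mathcal{L}^{\mathrm{un}}\supset zT_f\mathcal{L}^{\mathrm{un}}\supset\cdots$ is respected, is where the genuine work lies. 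The genus-zero simplifications from Lemma \ref{vir} and the Ramond vanishing result are precisely what force the relevant recursion to close, so I would invoke them to control the degrees appearing in $G_{q_k}$ before carrying out the term-by-term comparison.
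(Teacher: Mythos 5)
Your proposal has the right starting point (the $\mathbf{n}$-graded eigenvalue structure, so that $J^{\mathbf{s}}$ is a term-by-term rescaling of $J^{\mathrm{un}}$) and your ``more robust'' fallback gestures at the correct criterion, but the central mechanism you propose is not valid. The operator $\exp(-\sum_k G_{q_k}(zD^k,z))$ is \emph{not} a flow generated by the string and dilaton vector fields together with cone scaling: the $D^k$ are weighted Euler-type operators, the coefficients involve Bernoulli numbers and the $s_d$, and, crucially, $G_{q_k}(zD^k,z)$ contains terms with a factor $z^{-1}$ (the $m=0$ Bernoulli terms), and multiplication by $z^{-1}$ does not preserve $\mathcal{L}^{\mathrm{un}}$ or its tangent spaces. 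No general invariance principle applies here; indeed, if operators of this shape automatically preserved the cone, the companion operator $\Delta$ (which has exactly the same form) would also preserve it, and the twisted cone would coincide with the untwisted one, trivializing the entire twisting construction. Your first expected outcome is also false: the lemma asserts that $J^{\mathbf{s}}$ lies on the cone $\mathcal{L}^{\mathrm{un}}$, not on the J-slice, so there is no reparametrization $\tilde{\mathbf{t}}(\mathbf{t})$ with $J^{\mathbf{s}} = J^{\mathrm{un}}(\tilde{\mathbf{t}})$ up to scalar; a general cone point is a $z$-linear combination of derivatives of $J^{\mathrm{un}}$, not a value of it. Finally, Lemma \ref{vir} and Ramond vanishing are irrelevant to this statement, which is pure formal geometry of Givental's symplectic space; they concern the virtual class and are never used in this argument.

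What the paper actually does (following Chiodo--Ruan) is an induction on the grading $\mathrm{deg}(s_d)=d+1$, treating $J^{\mathbf{s}}$ as a power series in the $s_d$. One characterizes $\mathcal{L}^{\mathrm{un}}$ as the zero locus of explicit functionals $E_l$, checks the base case ($s=0$ gives $J^{\mathrm{un}}$, which is on the cone), and in the inductive step replaces $J^{\mathbf{s}}$ by a corrected point $\tilde{J^{\mathbf{s}}}$ genuinely on $\mathcal{L}^{\mathrm{un}}$ agreeing with it up to degree $n$. The chain rule gives $\partial_{s_d}\bigl(E_l(J^{\mathbf{s}})\bigr) = (d_{J^{\mathbf{s}}}E_l)\circ\bigl(z^{-1}P_d J^{\mathbf{s}}\bigr)$ with $P_d$ a polynomial in $z$ and $zD^k$; since applying $zD$ to a family on the cone lands in $zT\mathcal{L}^{\mathrm{un}} = \mathcal{L}^{\mathrm{un}}\cap T\mathcal{L}^{\mathrm{un}}$ and the filtration $z^kT\mathcal{L}^{\mathrm{un}}\subset zT\mathcal{L}^{\mathrm{un}}$ lets one iterate, one gets $P_d\tilde{J^{\mathbf{s}}}\in zT\mathcal{L}^{\mathrm{un}}$, hence $z^{-1}P_d\tilde{J^{\mathbf{s}}}\in T\mathcal{L}^{\mathrm{un}}$, which is annihilated by $dE_l$. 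This kills the $s$-derivatives to the relevant degree and advances the induction. The induction and the corrected point $\tilde{J^{\mathbf{s}}}$ are precisely the ingredients your proposal is missing, and they cannot be replaced by an appeal to symmetries of the cone.
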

\begin{proof}
We follow the proof found in \cite{ChiRu1}. We write elements of $\mathcal{V}^{\mathrm{tw}}$ in standard form: $$f = -z\phi_{J_1J_2} + \sum_{l \ge 0} \mathbf{t}_l z^l +  \sum_{l \ge 0} \frac{\mathbf{p}_l(f)}{(-z)^{l+1}}$$
for some $\mathbf{p}_l(f) = \sum_{h \in G} p_{l, j}(f)\phi^h.$ Define $$E_l(f) = \mathbf{p}_l(f) - \sum_{\substack{n \ge 0 \\ h \in G}} \frac{1}{n!} \langle \mathbf{t}(\psi), \ldots, \mathbf{t}(\psi), \psi^l\phi_h\rangle J^{\mathrm{un}}_{0, n+1} \phi^h.$$ Then $$\mathcal{L}^{\mathrm{un}} = \{ f \in \mathcal{V}^{\mathrm{un}} \,\vert\, E_l(f) = 0\}.$$

For the purposes of an enumeration for induction we set $\mathrm{deg}(s_d) = d+1$ and perform induction on the terms of $J^{\mathbf{s}}(\mathbf{t}, z)$, ordered by that degree, treating it as a power series in $s_1, s_2, \ldots$. (The $s_d$ are mixed since they appear in the exponential factor, so we do not simply perform induction on $d$.) The degree zero part has no equivariant terms and reduces to $J^{\mathrm{un}}(\mathbf{t}, z)$, which is of course in $\mathcal{L}^{\mathrm{un}}$. This is the base case. Now assume that the degree $n$ part vanishes. Then we can find some `corrected' $\tilde{J^{\mathbf{s}}}(\mathbf{t}, z)$ which \textit{is} in $\mathcal{L}^{\mathrm{un}}$ which has the same degree-$n$ part as $J^{\mathbf{s}}$. 

The idea is that taking the derivative with respect to each $s_d$ knocks down the degree of each term by at least 1, and we can use our inductive hypothesis and properties of the Lagrangian cone to prove the hypothesis for the degree $n+1$ part. By the chain rule, viewing $E_l$ as a function of $J^{\mathbf{s}}$:
$$\frac{\partial}{\partial s_d} (E_j (J^{\mathbf{s}}(\mathbf{t}, -z)) = (d_{J_{\mathbf{s}}}E_l)\circ(z^{-1}P_dJ^{\mathbf{s}}(\mathbf{t}, z)),$$ where from the definition of $J^{\mathbf{s}}$ we have $$P_d = \sum_{m=0}^{d+1} \frac{1}{m!(i+1-m)!} z^m B_m(q_k) (zD^k)^{d+1-m}.$$ But then up to degree $n$, $$\frac{\partial}{\partial s_d} (E_l J^{\mathbf{s}}) = \frac{\partial}{\partial s_d} (E_l \tilde{J^{\mathbf{s}}}).$$ Since $zT_{J^{\mathbf{s}}}\mathcal{L}^{\mathrm{un}} = \mathcal{L}^{\mathrm{un}} \cap T_{J^{\mathbf{s}}}\mathcal{L}^{\mathrm{un}}$, and $\mathcal{L}^{\mathrm{un}}$ is a cone, we have that operating on any point of $\mathcal{L}^{\mathrm{un}}$ by $zD$ gives us a point of $zT_{J^{\mathbf{s}}}\mathcal{L}^{\mathrm{un}} \subset T_{J^{\mathbf{s}}}\mathcal{L}^{\mathrm{un}}$. By repeated application a polynomial in such operators still keeps the point in this space, since $z^kT_{J^{\mathbf{s}}}\mathcal{L}^{\mathrm{un}} \subset zT_{J^{\mathbf{s}}}\mathcal{L}^{\mathrm{un}}$, so $P_d\tilde{J^{\mathbf{s}}}$ is still in $zT_{J^{\mathbf{s}}}\mathcal{L}^{\mathrm{un}}$; and thus $z^{-1}P_d\tilde{J^{\mathbf{s}}}$ lies in $T_{J^{\mathbf{s}}}\mathcal{L}^{\mathrm{un}}$. Therefore the whole derivative vanishes, so that the hypothesis holds up to degree $n+1$, and our conclusion follows.
\end{proof}

It follows from the two previous lemmas that $\Delta(J^{\mathbf{s}})$ lies in the twisted Lagrangian cone; this will be our equivariant I-function. We have 
$$I^{\mathrm{tw}}(\mathbf{t}, z) := \Delta(J^{\mathbf{s}}(\mathbf{t}, -z)) = \sum_{\mathbf{n}} \mathbf{M}_{\mathbf{n}}(z)J_{\mathbf{n}}^{\mathrm{un}}(\mathbf{t}, z),$$
where
\begin{align*}
\mathbf{M}_{\mathbf{n}}(z) & = \prod_{k=1}^7 \mathrm{exp}(G_{q_k}(\langle \sum_{h\in G}\mathbf{n}(h)i_k(h)\rangle z, z) - G_{q_k}(\sum_{h \in G} \mathbf{n}(h) i_k(h) z, z))\\
& = \prod_{\substack{1\le k \le 7\\0 \le b < \lfloor \sum_{h\in G} \mathbf{n}(h) i_k(h) \rfloor}}\mathrm{exp}(-\sum_{d\ge 0} s_d \frac{(-q_kz - \langle \sum_{h \in G}\mathrm{n}(h)i_k(h) \rangle z - bz ) ^d}{d!})\\
& = \prod_{\substack{1\le k \le 7\\0 \le b < \lfloor \sum_{h\in G} \mathbf{n}(h) i_k(h) \rfloor}}\mathrm{exp}(\mathrm{ln}(\lambda) +  \sum_{d>0} (-1)^{d+1} \frac{(q_k z + \langle \sum_{h \in G}\mathrm{n}(h)i_k(h) \rangle z + bz)^d}{d!})\\
& = \prod_{\substack{1\le k \le 7\\0 \le b < \lfloor \sum_{h\in G} \mathbf{n}(h) i_k(h) \rfloor}} (\lambda + q_k z +  \langle \sum_{h \in G}\mathrm{n}(h)i_k(h) \rangle z + bz)
\end{align*}
similarly to the argument in \cite{ChiRu1}, and likewise as $\lambda \to 0$ we obtain the non-equivariant I-function, which in our case we restrict to the narrow sectors of degree at most 2, so that
\begin{align}
& I^{\mathrm{nar}}_{FJRW}(\mathbf{t}, z)  = \sum_{\mathbf{n}:\mathcal{N}_{\mathrm{deg \le 2}} \to \mathbb{N}_0} \bigg(\prod_{\substack{1\le k \le 7\\0 \le b < \lfloor \sum_{h\in G} \mathbf{n}(h) i_k(h) \rfloor}}  \Big(q_k  +  \langle \sum_{h }\mathrm{n}(h)i_k(h) \rangle + b\Big)\bigg) \nonumber \\
& \times \,  z^{\sum_k\lfloor \sum_{h }\mathrm{n}(h)i_k(h) \rfloor - \vert\mathbf{n}\vert+1}\prod_h \frac{\mathbf{t}^{\mathbf{n}(h)}}{\mathbf{n}(h)!}\phi_{h_{\mathbf{n}}}\nonumber \\
& =  \sum_{\mathbf{n}:\mathcal{N}_{\mathrm{deg \le 2}} \to \mathbb{N}_0} \bigg(\prod_{1\le k \le 7} \frac{\Gamma\Big(q_k  +  \sum_{h}\mathrm{n}(h)i_k(h) \Big)}{\Gamma\Big(q_k  +  \langle \sum_{h }\mathrm{n}(h)i_k(h) \rangle + 1\Big)}\bigg) z^{\sum_k\lfloor \sum_{h }\mathrm{n}(h)i_k(h) \rfloor - \vert\mathbf{n}\vert+1}\prod_h \frac{\mathbf{t}^{\mathbf{n}(h)}}{\mathbf{n}(h)!}\phi_{h_{\mathbf{n}}}.\nonumber \\
& =: \sum_{h \in \mathcal{N}_{\mathrm{deg \le 2}}} \omega_h^{\mathrm{FJRW}}z^{\sum_k\lfloor i_k(h) \rfloor - \vert\mathbf{n}\vert+1} \phi_h. \nonumber
\end{align}
Separating out $J_1J_2$ contributes a constant factor of $\sum_{n_{J_1J_2}=0}^{\infty}\frac{z^{-n_{J_1J_2}}}{n_{J_1J_2}!} = e^{-z}$.

This expression may be most easily expanded for $W = X^2+Y^4+Z^4+x^2 + y^6 + z^6 + w^6 = 0$:
\begin{align*}
& (i_k(J_1^3J_2))_k = (0, \frac{1}{2}, \frac{1}{2}, 0, 0, 0, 0),\\
&  (i_k(J_1J_2^3))_k = (0, 0, 0, 0, \frac{1}{3}, \frac{1}{3}, \frac{1}{3}),\\
& (i_k(\sigma J_1^2 J_2^2))_k = (0, \frac{1}{4}, \frac{1}{4}, 0, \frac{1}{6}, \frac{1}{6}, \frac{1}{6}).
\end{align*}
We re-express the sum over $\mathbf{n}$ as a sum over the triple $(\mathbf{n}(J_1^3J_2), \mathbf{n}(J_1J_2^3), \mathbf{n}(\sigma J_1^2 J_2^2)) = (M, N, C)$, as follows:

\begin{align*}
&I^{\mathrm{nar}}_{FJRW}(\mathbf{t}, z) =  \sum_{M, N, C \ge 0} \frac{\Gamma(\frac{1}{2})\Gamma(\frac{1}{4} + \frac{M}{2} + \frac{C}{4})^2 \Gamma(\frac{1}{2})\Gamma(\frac{1}{6} + \frac{N}{3} + \frac{C}{6})^3}{\Gamma(\frac{1}{2}+1) \Gamma(\frac{1}{4} + \langle \frac{M}{2} + \frac{C}{4}\rangle + 1)^2 \Gamma(\frac{1}{2} + 1) \Gamma(\frac{1}{6} + \langle \frac{N}{3} + \frac{C}{6}\rangle + 1)^3}\\
& \times \frac{T_1^MT_2^NT_3^C}{M!N!C!}z^{1- M - N - C + 2 \lfloor \frac{M}{2} + \frac{C}{4}\rfloor + 3 \lfloor \frac{N}{3} + \frac{C}{6}\rfloor}\phi_{h({M, N, C})}\\
& =: \sum_h \sum_{(M, N, C): h(M, N, C) = h} \omega_h^{\mathrm{FJRW}}z^{1- M - N - C + 2 \lfloor \frac{M}{2} + \frac{C}{4}\rfloor + 3 \lfloor \frac{N}{3} + \frac{C}{6}\rfloor} \phi_h.
\end{align*}
It is easily seen that the exponent of $z$ depends only on $(M \, \mathrm{mod} \, 2, N \, \mathrm{mod} \,3, C \, \mathrm{mod} \, 12)$, and that  for each $h$ there are only 6 cases. For other orbifolds of our type, there are similarly few cases, which depend similarly on the lowest common multiples of $w_i$.

In the general case, consider for which terms the exponent of $z$ can be 1. This would require $$\sum_k \mathbf{n} (h) = \sum_k \lfloor \sum_h \mathbf{n}(h) \rfloor,$$ but since it is always true that $i_k(h)<1$, we must have $i_k(h_{\mathbf{n}}) = \langle \sum_h \mathbf{n}(h) i_k(h) \rangle = 0$ for all $k$, whence $h_{\mathbf{n}} = J_1J_2.$ Since we require that $\langle \sum_h \mathbf{n}(h) i_k(h) \rangle = 0$, the linear coefficient of $\mathbf{t}$ must be zero, so it follows that the term in $z^1$ must be $f(\mathbf{t})z\phi_{J_1J_2}, $ where $f(\mathbf{t}) = f_0(t^{J_1J_2}) + \mathcal{O}(\mathbf{t}^2).$

Then we may write $I_{\mathrm{FJRW}}(z, \mathbf{t}) = f(\mathbf{t})z\phi_{J_1J_2} + \mathbf{g}(\mathbf{t}) + \mathcal{O}(z^{-1}).$ Then if we set $\mathbf{\tau}(\mathbf{t}) = \mathbf{g}(t)/f(t),$ we find that $$ \frac{I_{\mathrm{FJRW}}^{(W, G)} (\mathbf{t}, z)}{f(\mathbf{t}}$$ lies on $\mathcal{L}^{(W, G)}$ and is of the form $z\phi_{J_1J_2} + \mathbf{t} + \mathcal{O}(z^{-1}).$ Since the J-function is unique with respect to this property, this gives us the following FJRW `mirror theorem'.

\begin{prop} 
$$J_{\mathrm{FJRW}}^{(W, G)}(\mathbf{\tau}(\mathbf{t}), z) = \frac{I_{\mathrm{FJRW}}^{(W, G)} (\mathbf{t}, z)}{f(\mathbf{t})}.$$
\end{prop}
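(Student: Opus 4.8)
The plan is to obtain the statement almost directly from the material already assembled above it, the decisive ingredient being the uniqueness characterisation of the $J$-function inside Givental's formalism. Recall that the two preceding lemmas together with the twisting proposition show that $I^{\mathrm{tw}}(\mathbf{t}, z) = \Delta(J^{\mathbf{s}}(\mathbf{t}, -z))$ lies on the twisted Lagrangian cone $\mathcal{L}^{\mathrm{tw}}$. First I would take the non-equivariant limit $\lambda \to 0$ and restrict to the narrow sectors, arguing that this restriction is compatible with the cone structure so that the resulting $I_{\mathrm{FJRW}}^{(W, G)}(\mathbf{t}, z)$ lies on the narrow Lagrangian cone $\mathcal{L}^{(W, G)}$. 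This is where one must confirm that the narrow sectors really do close up under the operations governing the cone, i.e.\ that $\mathcal{L}^{(W, G)}$ is genuinely the narrow restriction of the ambient cone; this is already implicit in the Ramond vanishing lemma, which guarantees that the relevant pushforwards behave well when broad insertions are removed.

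Next, since $\mathcal{L}^{(W, G)}$ is a cone, for each fixed $\mathbf{t}$ the scalar rescaling $\tfrac{1}{f(\mathbf{t})} I_{\mathrm{FJRW}}^{(W, G)}(\mathbf{t}, z)$ remains a point of $\mathcal{L}^{(W, G)}$. Using the expansion $I_{\mathrm{FJRW}} = f(\mathbf{t})\, z\, \phi_{J_1J_2} + \mathbf{g}(\mathbf{t}) + \mathcal{O}(z^{-1})$ established just above, this rescaled family takes the normalised shape $z\,\phi_{J_1J_2} + \tau(\mathbf{t}) + \mathcal{O}(z^{-1})$ with $\tau(\mathbf{t}) = \mathbf{g}(\mathbf{t})/f(\mathbf{t})$. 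At this point I would pause to reconcile the $\pm z$ bookkeeping: the definition $I^{\mathrm{tw}}(\mathbf{t}, z) = \Delta(J^{\mathbf{s}}(\mathbf{t}, -z))$ carries a $-z$, and the dilaton shift $q_1^0 = t_1^0 - 1$ means that elements of $\mathcal{L}^{(W, G)}$ are written $-\phi_{J_1J_2} z + \mathbf{t} + \mathcal{O}(z^{-1})$ in $\mathbf{q}$-coordinates but $+\phi_{J_1J_2} z$ once the shift is undone, so one must track the conventions consistently against the standard-form expansion of cone elements.

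Finally I would invoke the defining property recorded in the Givental-formalism subsection: $\mathcal{L}^{(W, G)}$ meets the affine slice $-\phi_{J_1J_2} z \oplus \mathcal{H} \oplus \mathcal{V}_-$ in precisely the graph of the $J$-function, so any point of the cone of the form $z\,\phi_{J_1J_2} + \mathbf{u} + \mathcal{O}(z^{-1})$ must equal $J_{\mathrm{FJRW}}^{(W, G)}(\mathbf{u}, z)$. Applying this uniqueness with $\mathbf{u} = \tau(\mathbf{t})$ gives
\[
J_{\mathrm{FJRW}}^{(W, G)}(\tau(\mathbf{t}), z) = \frac{I_{\mathrm{FJRW}}^{(W, G)}(\mathbf{t}, z)}{f(\mathbf{t})},
\]
which is the claim.

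The main obstacle is the first step: justifying that the narrow, non-equivariant restriction of the twisted $I$-function lands on the \emph{narrow} Lagrangian cone rather than merely on the ambient twisted one, which requires both that the narrow state space is closed under the operations ruling the cone and that passing to the non-equivariant limit commutes with restriction. Everything after that is the essentially formal uniqueness argument, and the remaining bookkeeping—the precise shapes of $f$ and $\mathbf{g}$ and the $\mathcal{O}(\mathbf{t}^2)$ correction to $f$—has already been carried out in the paragraph preceding the statement.
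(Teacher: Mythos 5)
Your proposal follows essentially the same route as the paper: the preceding lemmas place the (non-equivariant, narrow) $I$-function on the Lagrangian cone $\mathcal{L}^{(W,G)}$, the cone property permits the rescaling by $1/f(\mathbf{t})$, and the uniqueness of the $J$-function as the slice of the cone of the form $z\phi_{J_1J_2} + \mathbf{t} + \mathcal{O}(z^{-1})$ then forces the identity. The paper's own proof is exactly this formal argument, stated in the paragraph preceding the proposition, so your write-up (including the care about the narrow restriction, which the paper handles only implicitly via the Ramond vanishing lemma and the restriction to narrow insertions) is a faithful, if slightly more cautious, version of the same proof.
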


\section{The Intermediate Mixed Theories}

%In practice, both Gromov-Witten and FJRW theory have usually been computed for quotients of hypersurfaces or  in toric varieties. That is, spaces of the form $[\prod_{i=1}^r (\mathbb{C}^{n_i} \backslash \{0\})/G]$, where most commonly $G \cong (\mathbb{C}^*)^r$. The LG/CY correspondence in the case of a hypersurface in a weighted projective space, where $r=1$, is the most well understood. In relating FJRW and GW theory for complete intersections where  $r>1$, such as the case of Borcea-Voisin orbifolds, the correspondence is given by applying the process for hypersurfaces sequentially for each factor $\mathbb{C}^{n_i} \backslash \{0\}$. To give meaning to the cases in between, it is necessary to define `mixed' LG-CY theories. This follows a new GLSM theory of Landau-Ginzburg quasimaps given by Fan, Jarvis and Ruan \cite{FJRnew}, which generalises the hybrid model defined in \cite{Cl}. %The definitions in our case are analogous, so we shall quote results here.

We consider the Borcea-Voisin orbifolds as arising as complete intersections $\mathbb{Y}$ in GIT quotients $$\mathcal{X}_{\theta} := [V//_{\theta}(\mathbb{C}^*)^3], V = (\mathbb{C}^3 \times \mathbb{C}^4  \times \mathbb{C} \times \mathbb{C}^2)$$ for some character $\theta$ of $(\mathbb{C}^*)^3,$ where $(\mathbb{C}^*)^3$ acts by $$\begin{pmatrix} 2 & 1 & 1 & 0 & 0 & 0 & 0 -4 & 0\\ 0 & 0 & 0 & w_0 & w_1 & w_2 & w_3 & 0 & -2w_0\\ 1 & 0 & 0 & 1 & 0 & 0 & 0 & 2 & 0 & 0\end{pmatrix}.$$ The potential for this theory is given by $p_1W_1 + p_2W_2$ where here the variables $p_1, p_2$ give coordinates of the last factor $\mathbb{C}^2$ to fit in with the notation of \cite{FJRnew}. The critical locus of this potential is therefore $$\{ W_1 = 0, W_2 = 0, p_1\frac{\partial W_1}{\partial X}, \ldots, p_1\frac{\partial W_1}{\partial Z}, p_2\frac{\partial W_2}{\partial x}, \ldots, p_2\frac{\partial W_2}{\partial w} \}.$$ Since $W_1, W_2$ are non-degenerate, either $p_1 = W_1 = 0$ or $p_1 \ne 0$, $(X, Y, Z) = 0$, and either $p_2 = W_2 = 0$ or $p_2 \ne 0$, $(x, y, z, w) = 0$.

%The potential induces a moment map $\mathbf{\mu}_V: V \to \mathcal{u}_1 \oplus \mathcal{u}_1 \oplus \mathcal{u}(1)$ given by
%\begin{align*}
%\mu_1 = \frac{1}{2}(2\vert X\vert^2 + \vert Y \vert^2 + \vert Z \vert^2 -4\vert p_1\vert^2),\\
%\mu_2 =  \frac{1}{2}(w_0\vert x\vert^2 - 2+ w_1\vert y\vert^2 + w_2\vert z\vert^2 + w_3\vert w\vert^2 - 2w_0\vert p_2\vert^2),\\
%\mu_3 = \frac{1}{2}(\vert X \vert^2 + \vert x\vert^2 - 2\vert \alpha \vert^2.
%\end{align*}

%Assume first that $\mu_3 \ge 0$. Then the critical loci and values of $\mathbf{mu}$ divides $\mathcal{u}(1) \oplus \mathcal{u}(1) \cong \mathbb{R}^2$ into 

We split into cases according to the characters $\theta:G \to \mathbb{C}^*$, which acts on the total space of $L_{\theta}$ by $$g = (\lambda_1, \lambda_2, \lambda_3):(\mathbf{v},\tilde{z}) \mapsto (g \cdot \mathbf{v}, \lambda_1^{e_1}\lambda_2^{e_2}\lambda_3^{e_3}\tilde{z})$$. In each case we shall consider $e_3 >0$.
\begin{enumerate}
\item If $e_1<0$ and $e_2<0$, then the semi-stable points require some section $f$ of $ L_{\theta}^{\otimes k}$ to be $G$-invariant and non-zero there. Since the weights are negative for $p_1, p_2, p_3$ but positive for the other coordinates, to ensure invariance we require each monomial in $f$ to have at least some non-zero $X, Y, Z$ (for $\lambda_1 \ne 1)$ and some non-zero $x, y, z, w$ (for $\lambda_2 \ne 1$) to cancel the negative weights from $\theta$. Otherwise, we have complete freedom to choose $k$ and $f$.  Therefore, $$V^{ss}(\theta) = \{(X, Y, Z), (x, y, z, w) \ne 0\}.$$ We choose $\vartheta$ to be the trivial lift, which is clearly good. The intersection with the critical locus is then $$[\frac{(\mathbb{C}^3 \backslash 0) \times (\mathbb{C}^4\backslash 0) \times (\mathbb{C} \backslash 0) \times \{0\} \times \{0\}}{(\mathbb{C}^*)^3}].$$ This is the geometric phase, which cedes Gromov-Witten theory.
\item If $e_1>0$ and $e_2 < 0$, then the semistable locus is $$[V^{ss}//_{\theta} G] = [(\mathbb{C}^3 \times (\mathbb{C}^4\backslash 0) \times \mathbb{C}^* \times \mathbb{C})/(\mathbb{C}^*)^3].$$ The ambient space can then  be viewed as a non-trivial $B\mathbb{Z}_2$-gerbe over $[K / \langle \sigma_K].$ This is the first mixed theory.
\item  If $e_1<0$ and $e_2 > 0$, then the semistable locus is $$[V^{ss}//_{\theta} G] = [((\mathbb{C}^3\backslash 0) \times \mathbb{C}^4 \times \mathbb{C} \times \mathbb{C}^*)/(\mathbb{C}^*)^3].$$ The ambient space can then  be viewed as a non-trivial $B\mathbb{Z}_2$-gerbe over $[E / \langle \sigma_E].$ This is the second mixed theory.
\item If $e_1 >0$ and $e_2 > 0$ then the semistable locus is $$[V^{ss}//_{\theta} G] = [(\mathbb{C}^3 \times \mathbb{C}^4 \times \mathbb{C}^* \times \mathbb{C}^*)/(\mathbb{C}^*)^3].$$ This is the FJRW theory. In our case, we have chosen $p_1 = p_2 = 1$ and the group action is the same as that of the group $\langle J_1, J_2, \sigma\rangle$ considered in the FJRW section. 
\end{enumerate}

We shall calculate the I-functions for the two mixed theories; it will be convenient to express the state space in a  way that relates to the GW and FJRW theories.  We first compute the first mixed theory, which we label by $(FJRW, GW)$, in that it is related to the FJRW theory for the elliptic curve part, and the Gromov-Witten theory for the K3 part. The situation for the other mixed theory is entirely similar.

Consider the induced actions of $\sigma \in G$ on $H_{\mathrm{CR}}(\mathbf{C}^3, W_1^{+\infty})$ and $H_{\mathrm{CR}}(\{x^2 + P(x, y, z)=0\})$. These split into positive and negative eigenspaces, $H_{\mathrm{FJRW}, E}^{\pm}$ and $H_{\mathrm{GW}, K}^{\pm}$ respectively.

If $\mathrm{Re}W$ takes on value greater than $2\rho$, then either $W_1$ (the polynomial defining the elliptic curve) or $W_2$, the polynomial defining the K3 surface) must have real part greater than $\rho$. Taking $\rho$ arbitrarily large, this allows us to decompose $W^{+\infty}$ and apply the K\"unneth theorem for relative cohomology to the GLSM state space $H_{\mathrm{CR}}(\mathcal{X}_{\theta}, W^{+\infty}; \mathbb{C})$. Then the part of the total Chen-Ruan cohomology untwisted by $\sigma$ is given by the singular K\"unneth theorem as $$(\mathcal{H}_{\mathrm{FJRW}, E}^+ \otimes \mathcal{H}_{\mathrm{GW}, K}^{+}) \oplus (\mathcal{H}_{\mathrm{FJRW}, E}^- \otimes \mathcal{H}_{\mathrm{GW}, K}^{-}).$$
To this we must add the part twisted by $\sigma$, which is given as the $G$-invariant relative Chen-Ruan cohomology of $$\mathrm{Fix}(\sigma)=(\mathrm{Fix}(\langle J_1, \sigma_E\rangle)) \times \mathrm{Fix}(\sigma_K) \subseteq \mathbb{C}^3 \times ([\{x^2 + P(x, y, z)=0\}/\langle J_2 \rangle]).$$ By the K\"unneth theorem this is the tensor product of the $\sigma$-twisted parts $H_{\mathrm{FJRW}, E}^{\sigma_E} \otimes H_{\mathrm{GW}, K}^{\sigma_K}$. 

We are interest in the sectors of compact type. These are by definition the spans of the narrow sectors (which are induced by the narrow sectors of $\mathcal{H}_{\mathrm{FJRW}, E}$, and the critical sectors (which are induced by the ambient sectors of $\mathcal{H}_{\mathrm{GW}, K}$). Thus we have $$\mathcal{H}_{\mathrm{FJRW},\,\mathrm{GW}}^{\mathrm{comp}} = (H_{\mathrm{FJRW}, E}^{\mathrm{nar}, +} \otimes H_{\mathrm{GW}, K}^{\mathrm{amb}, +}) \oplus (H_{\mathrm{FJRW}, E}^{\mathrm{nar},-} \otimes H_{\mathrm{GW}, K}^{\mathrm{amb}, -}) \oplus (H_{\mathrm{FJRW}, E}^{\sigma_E, \mathrm{nar}} \otimes H_{\mathrm{GW}, K}^{\sigma_K, \mathrm{amb}}).$$

We have a basis of $\mathcal{H}_{\mathrm{FJRW},\,\mathrm{GW}}^{\mathrm{comp}}$ given by elements we may write of the form:
\begin{itemize}
\item $\phi_{J_1}\mathbf{1}_0$
\item $\phi_{J_1^3}\mathbf{1}_0, \phi_{\sigma_E J_1^2}\mathbf{1}_{\sigma_K},  1+ \sum_{i, j: \mathrm{gcd}(i, j) >1}(p_{i, j} - 1)$ sectors of the form $\phi_{J_1}\mathbf{1}_{g_{i, j}^r}$,\\
 $\frac{1}{2}\sum_{w_j \not\vert w_0} p_{0, j}$ sectors of the form $\sigma J_1^2\mathbf{1}_{g_{\frac{r}{2p}}}$
\item $\phi_{J_1}D_K^2, 1+ \sum_{i, j: \mathrm{gcd}(i, j) >1}(p_{i, j} - 1)$ sectors of the form $\phi_{J_1}^3\mathbf{1}_{g_{i, j}^r}, \phi_{\sigma_E J_1^2} D_K\mathbf{1}_{\sigma_K},$\\
 $\frac{1}{2}\sum_{w_j \not\vert w_0} p_{0, j}$ sectors of the form $\phi_{\sigma_E J_1^2}\mathbf{1}_{g_{\frac{r}{2p}}}$, and a sector $\phi_{J_1^3}D_K\mathbf{1}_{\tilde{g}}$ when $w_1 = \frac{d}{3}$
\item $\phi_{J_1^3}D_K^2$
\end{itemize}
The new GLSM theory unifies the notion of degree, as well as the pairing. We have $\mathrm{deg}(\phi_h \mathbf{\beta}) = \mathrm{deg}(\phi_h) + \mathrm{deg}(\beta)$. This gives an isomorphism between all narrow state spaces. %We furthermore denote the sectors of $[K/\langle \sigma \rangle]$, which appear above, as $\mathrm{Box}([K/\langle \sigma_K\rangle])$. The toric stack construction follows through. Under the projection map $$(a, b, c, \mathbf{k}) \mapsto (b, c, \mathbf{k}),$$ the original analogous constructions for $\mathcal{X}$ produce $\mathbb{L}^S([K/\langle \sigma \rangle]),$ $\mathrm{NE}^S([K/\langle \sigma_K\rangle]),$ a valuation map $v^S_K$, and for each $\mathbf{b} \in \mathrm{Box}([K/\langle \sigma_K\rangle]),$ we have $\Lambda E_{\mathrm{b}}^S([K/\langle \sigma_K\rangle])$.

In our case, we re-express the Landau-Ginzburg quasimaps for the case of toric stacks as given in \cite{FJRnew}, decomposing the line bundle $\mathcal{P}$ and the sections $\sigma, \kappa$ into summands. Then, for $E \subseteq \mathbb{P}(2, 1, 1)$, the moduli space $LGQ_{g, n}^{\vartheta, \epsilon}(\mathcal{X}_{\theta}, \beta)$ is given by \begin{align*}& \{(\mathcal{C}, z_1, \ldots, z_n, \mathcal{L}_1,   \mathcal{L}_2, \mathcal{T}, s_X, s_Y, s_Z, s_x, s_y, s_z, s_w, s_{p_1}, s_{p_2}): \\ & s_X \in H^0(\mathcal{L}_1^{\otimes 3} \otimes \mathcal{T}), s_Y, s_Z \in H^0(\mathcal{L}_1),  s_x \in H^0(\mathcal{L}_2^{\otimes w_0} \otimes \mathcal{T}), \\ & s_y \in H^0(\mathcal{L}_2^{\otimes w_1}), s_y \in H^0(\mathcal{L}_2^{\otimes w_2}), s_z \in H^0(\mathcal{L}_2^{\otimes w_3}),\\ & s_{p_1} \in H^0(\mathcal{L}_1^{-4} \otimes \omega_{\mathrm{log}}), s_{p_2} \in H^0(\mathcal{L}_2^{-2w_0}\otimes \omega_{\mathrm{log}}), \mathcal{T}^{\otimes 2} \cong \mathcal{O}\},\end{align*} where the stability conditions are satisfied.

For Gromov-Witten theory, the sections $s_i$ induce a map $f:\mathcal{C} \to \mathcal{X}_{\vartheta},$ so this is the moduli space of stable maps. For FJRW theory, this is the moduli space of spin curves subject to the conditions provided by these sections. For the mixed theories, we exploit the fact that the moduli space decomposes. Note that there are two possible 2-torsion bundles $\mathcal{T}$: one trivial and one not. Suppressing the markings, each LG quasimap $(\mathcal{C}, \mathcal{L}_1, \mathcal{L}_2, \mathcal{T}, \sigma_X, \ldots, \sigma_x \ldots, \sigma_{p_1}, \sigma_{p_2}),$ gives a pair of LG quasimaps $$(\mathcal{C}, \mathcal{L}_1, \mathcal{T}, \sigma_X, \sigma_Y, \sigma_Z, \sigma_{p_1}), (\mathcal{C}, \mathcal{L}_2, \mathcal{T}, \sigma_x, \sigma_y, \sigma_z, \sigma_w, \sigma_{p_2}).$$ $\mathcal{T}$ depends on $\beta$, which corresponds to a member of the state space. Thus, provided this condition is satisfied, that moduli space presents as a fibre product over $\mathcal{M}_{g, n}$.

In the Gromov-Witten case, this is to say that giving a stable map $f: \mathcal{C} \to [E \times K/\mathbb{Z}_2]$ is equivalent to giving one stable map $f_1: \mathcal{C} \to [E/\langle \sigma_E \rangle]$ and another $f_2:\mathcal{C} \to [K/\langle \sigma_K \rangle]$, provided that the images of $f_1, f_2$ either both lie in the untwisted component of their inertia stacks, or both in the twisted components (giving a well-defined class $\beta$). In the FJRW case, we see that the we get separate line bundles subject to conditions equivalent to the FJRW stability conditions, i.e. the structure of $W$-spin curves.

For  the first mixed theory, for $\mathbf{h} \in \mathcal{H}^{\mathrm{FJRW}}(W_1, \langle J_1, \sigma_E\rangle), \beta \in \mathcal{H}^{\mathrm{GW}}([K/\langle \sigma_K \rangle])$ such that  $\mathbf{h} \beta \in \mathcal{H}_{\mathrm{FJRW}, \mathrm{GW}}$, the genus-0 n-pointed moduli space for $\mathbf{h} \beta$ may be written
\begin{align*}
\overline{\mathcal{M}}^{\mathrm{FJRW, GW}}_{0, n}(\mathbf{W}, G, \beta \mathbf{h}) = & \mathcal{W}^{\mathrm{FJRW}}_{0, n, \mathbf{h}} (W_1, \langle J_1, \sigma_E\rangle) \times_{\overline{\mathcal{M}}_{0, n}} \mathcal{M}^{\mathrm{GW}}_{0, n}([K/\langle \sigma_K \rangle], \beta).
\end{align*}
This justifies the subscript notation for the mixed theories.

The Hodge bundle of this theory decomposes similarly, and we have $$\psi_{\mathrm{FJRW},\,\mathrm{GW}, i} = \psi_{\mathrm{FJRW}, i}\psi_{\mathrm{GW}, i},$$ so we see that the invariants above in fact decompose as $$\langle \tau_{a_1}(h_1), \ldots, \tau_{a_n}(h_n)\rangle_{0, n}^{(W_1, \langle J_1, \sigma_E\rangle)}\langle \tau_{a_1}(\alpha_n), \ldots, \tau_{a_n}(\alpha_n)\rangle_{0, n}^{([K/\langle \sigma_K\rangle], \beta)}.$$ Composing the evaluation maps with the two projections gives a similar composition there. The only difficulty  in determining the invariants lies in the fact that the virtual class does not necessarily decompose naturally.

We compute the untwisted invariants as follows. Since we are working with toric stacks, we may let $\rho: LGQ_{0, n}^{\vartheta, \epsilon}(\mathcal{X}_{\theta}, \beta) \to M_{0, n}(\mathcal{X}_{\theta}, \beta')$ be the natural map sending a stable $LG$-quasimap to the induced stable map to $\mathcal{X}_{\theta}$, where $\beta'$ is the induced homology class. For genus 0, the  virtual class is given by the cosection construction in both cases, and is preserved. The untwisted invariants are therefore given by 
$$\langle\tau_{a_1}(\alpha_1), \ldots, \tau_{a_n}(\alpha_n) \rangle_{0, n}^{\vartheta} = \int_{\rho^*([M_{0, n}(\mathcal{X}_{\theta}, \beta')]^{\mathrm{vir})}} \prod_{i=1}^n \mathrm{ev}_i(\alpha_i)\psi_i^{a_i}$$ which by substitution are the genus-0 Gromov-Witten invariants of the `ambient' $\mathcal{X}_{\theta}.$ This is a toric stack, and as such can be computed from the main theorem of \cite{CCIT1}, as before, after naturally identifying the Chen-Ruan basis with the mixed basis.

For example, we can see that the untwisted FJRW theory is equivalent to the Gromov-Witten theory of $\mathcal{B}G$, and indeed that the untwisted FJRW J-function from the previous section may be given by $ze^{\sum_{h\in G} t_h\phi_{h}/z}$, identifying the FJRW basis elements $\phi_h$ with the fundamental classes of the $h$-sectors. 

The string equation allows us to write this as \begin{align*}J_{\mathrm{FJRW}, \mathrm{GW}}^{\mathrm{un}}(\mathbf{t}, z) = & e^{(\phi_{J_1}\mathbf{1}_0t_{J_1}  + \sum_{i = 0}^3 w_i\phi_0D_Kt_{4+i} +  \phi_{\sigma_E}\mathbf{1}_{\sigma_K} + \sum_{g: \mathrm{deg}(\phi_{\sigma_E} \mathbf{1}_{\sigma_K g} ) = 2} \phi_{\sigma_E} \mathbf{1}_{\sigma_K g})/z})\\ & \times \frac{\prod_{i=1}^4\Gamma(w_iD_K/z+1)}{\Gamma(w_0D_K+w_0b+c+1)\prod_{i=1}^3\Gamma(w_iD_K/z+w_ib+1)}. \end{align*}

The twisted I-function may be found by the same methods of quantisation Givental given in the previous section and the orbifold Grothendieck-Riemann-Roch theorem, and from the work of Tseng, as detailed in \cite{CCIT2}. We express it slightly differently, exploiting the decomposition of the moduli space and the additivity of the functor $c_{\mathrm{top}}R^1\pi_*$. As in \cite{CIT}, let $\beta \in \mathrm{Box}([K/\sigma_K])$ correspond to the fundamental class of a component of the inertia stack of $[K/\langle\sigma_K\rangle]$, let $\mathcal{K}$ be the line bundle whose first Chern class corresponds to $K$, and for $(p, g) \in I[K/\langle \sigma \rangle]$, let $g$ act on $\mathcal{K}\vert_p$ by $e^{2\pi i f(b)}$. Then we have the following decomposition. $$\Delta = (\prod_{k=4}^7\bigoplus_{h \in \langle J_1, \sigma_E\rangle} \mathrm{exp}({G_{q_k}(i_k(h)z, z)} )\bigoplus_{\beta \in \mathrm{Box}([K/\sigma_K])\\ h\beta \in \mathcal{H}_{\mathrm{FJRW}, \mathrm{GW}}} \mathrm{exp}(G_{f(b)}(E, z)).$$

Then we have
\begin{align*}
& I_{\mathrm{FJRW},\,\mathrm{GW}}(\mathbf{t}, z) = ze^{(w_0t_{4} + w_1t_5+w_2t_6+w_3t_7)D_K/z} e^{-z} \times \\
& \sum_{n_{3}, n_{\sigma} \in \mathbb{N}_0^3} 2 \frac{\Gamma(\frac{1}{4}+\frac{n_{3}}{2} + \frac{n_{\sigma}}{4})^2 t_3^{n_3} t_{\sigma}^{n_{\sigma}}}{\Gamma(\frac{1}{4}+\langle\frac{n_3}{2} + \frac{n_{\sigma}}{4}\rangle+1)^2n_3!n_{\sigma}!}z^{\lfloor \frac{n_{3}}{2} + \frac{n_{\sigma}}{4} \rfloor - (n_{3} + n_{\sigma})}  \sum_{\mathbf{b} \in \mathrm{Box}([K/\langle \sigma_K\rangle])}L_{\mathbf{b}}\times\\
& \sum_{\substack{(b, c, \mathbf{k}) \in \Lambda E_{\mathbf{b}}^S([K/\langle \sigma_K\rangle])\\ 2c+ \sum_{j=l+1}^mk_j = n_{\sigma}}} (q_2^b q_3^c \prod_{j=1}^mx_j^{k_j})e^{b(\sum_{i=1}^nw_it_i)+c(t_4+2t_8)}G_{\mathbf{b}}(b, c, \mathbf{k})\phi_{h_{n_3, n_{\sigma}}}\mathbf{1}_{\mathbf{b}}\\ %b \ge -c/w_0\\ v^S_{[K/\langle \sigma_K\rangle]}(b, c, \mathbf{k}) = \mathbf{b}\\
=: & \sum_{n_1, n_3, n_{\sigma}} \sum_{\mathbf{b} \in \mathrm{Box}([K/\langle \sigma_K\rangle])}  \omega_{n_3, n_{\sigma}, \mathbf{b}}^{\mathrm{FJRW},\,\mathrm{GW}} z^{\lfloor \frac{n_{3}}{2} + \frac{n_{\sigma}}{4} \rfloor - (n_{1} + n_{3} + n_{\sigma})} \phi_{n_1, n_3, n_{\sigma}} \mathbf{1_b}
\end{align*}
where a factor involving $c$ appears only once. Here we have extracted a factor of $\sum_{n_1=0}^{\infty} \frac{z^{-n_{1}}}{n_1!} = e^{-z}$ to suppress the term-wise dependence on $n_1$, and of course $2 =  \frac{\Gamma(\frac{1}{2})}{\Gamma(1+\frac{1}{2})}.$

For the second mixed theory,  we compute the I-function for either of the two considered elliptic curves $E$ and $W_2 = x^2 + y^6 + z^6 + w^6.$ Again, we may write a basis of $\mathcal{H}_{\mathrm{GW}, \, \mathrm{FJRW}}^{\mathrm{comp}}$ as 
\begin{itemize}
\item $\mathbf{1}_0\phi_{J_2}$
\item $\mathbf{1}_0\phi_{J_2^3}, D_E\phi_{J_2}, \mathbf{1}_{\sigma_E}\phi_{\sigma_K J_2^2}[, \mathbf{1}_{\sigma_E g} \phi_{\sigma_K J_2^2}]$\\
\item $\mathbf{1}_0\phi_{J_2^5}, D_E\phi_{J_2^3}, \mathbf{1}_{\sigma_E}\phi_{\sigma_K J_2^4}[, \mathbf{1}_{\sigma_E g} \phi_{\sigma_K J_2^4}]$\\
\item $D_E\phi_{J_1^5}$
\end{itemize}
where the terms in square brackets come from the extra sector $\mathbf{1}_{\sigma g}$ that appears for $E = \{X^2+Y^3+Z^6 = 0\}.$

As we had before for $W_1$, let $h_{(n_1, n_3, n_{\sigma})}$ be the unique element of $\langle J_2, \sigma_K\rangle$ such that $$i_k(h_{(n_1, n_3, n_{\sigma})}) = n_1i_k(J_2) + n_3i_k(J_2^3) + n_{\sigma}i_k(\sigma_K J_2^2).$$ Similarly to the first mixed theory, we find, for $E = \{X^2+Y^4+Z^4 = 0\},$
\begin{align*}
& I_{\mathrm{GW},\,\mathrm{FJRW}}(\mathbf{t}, z) = ze^{(2t_{1} + t_2+t_3)D_E/z} e^{-z}\times \\
& \sum_{n_{3}, n_{\sigma} \in \mathbb{N}_0^3} 2 \frac{\Gamma(\frac{1}{6}+\frac{n_{3}}{3} + \frac{n_{\sigma}}{6})^3t_3^{n_3} t_{\sigma}^{n_{\sigma}}}{\Gamma(\frac{1}{6}+\langle\frac{n_3}{3} + \frac{n_{\sigma}}{6}\rangle+1)^3n_3!n_{\sigma}!}z^{\lfloor \frac{n_{3}}{3} + \frac{n_{\sigma}}{6} \rfloor - (n_{1} + n_{3} + n_{\sigma})} \times\\
&  \sum_{\mathbf{b} \in \mathrm{Box}([E/\langle \sigma_E\rangle])}K_{\mathbf{b}}\sum_{\substack{(a, c) \in \Lambda E_{\mathbf{b}}^S([E/\langle \sigma_E\rangle])\\ 2c = n_{\sigma}}} q_2^a q_3^ce^{a(2t_1+t_2+t_3)+c(t_1+2t_8)}F_{\mathbf{b}}(a, c)\mathbf{1}_{\mathbf{b}}\phi_{h_{n_3, n_{\sigma}}}\\ %b \ge -c/w_0\\ v^S_{[K/\langle \sigma_K\rangle]}(b, c, \mathbf{k}) = \mathbf{b}\\
=: & \sum_{n_1, n_3, n_{\sigma}} \sum_{\mathbf{b} \in \mathrm{Box}([E/\langle \sigma_E\rangle])}  \omega_{\mathbf{b}, n_3, n_{\sigma}}^{\mathrm{GW},\,\mathrm{FJRW}} z^{\lfloor \frac{n_{3}}{3} + \frac{n_{\sigma}}{6} \rfloor - (n_{3} + n_{\sigma})} \mathbf{1_b} \phi_{n_1, n_3, n_{\sigma}}. 
\end{align*}

For both of these mixed theories, the exponent of $z$ is then only 1 for the term corresponding to the identity ($\phi_{J_1}\mathbf{1}_0$ or $\mathbf{1}\phi_{J_2}$, respectively), and the coefficient of $z^0$ is clearly linear in $\mathbf{t}$. If write
\begin{align*}
& I_{\mathrm{FJRW}, \mathrm{GW}}(z, \mathbf{t}) = f_1(\mathbf{t})z\phi_{J_1\mathbf{1}_0} + \mathbf{g}_1(\mathbf{t}) + \mathcal{O}(z^{-1}),\\
& I_{\mathrm{GW}, \mathrm{FJRW}}(z, \mathbf{t}) = f_2(\mathbf{t})z\phi_{\mathbf{1}_0J_2} + \mathbf{g}_2(\mathbf{t}) + \mathcal{O}(z^{-1}).
\end{align*}

Then for $i=1, 2$, if we set $\mathbf{\tau}_i(\mathbf{t}) = \mathbf{g}_i(t)/f_i(t),$ we find that $$\frac{I_{\mathrm{FJRW}}^{(W, G)} (\mathbf{t}, z)}{f(\mathbf{t}}$$ lies on $\mathcal{L}^{(W, G)}$ and is of the form $z\phi_{J_1J_2} + \mathbf{t} + \mathcal{O}(z^{-1}).$ Since the J-function is unique with respect to this property, this gives us the following FJRW `mirror theorems'.

\begin{prop}
\begin{align*}
& J_{\mathrm{FJRW}, \mathrm{GW}}^{(W, G)}(\mathbf{\tau}_1(\mathbf{t}), z) = \frac{I_{\mathrm{FJRW}, \mathrm{GW}}^{(W_1, \langle J_1, \sigma_E \rangle), [K/\langle \sigma_K\rangle]} (\mathbf{t}, z)}{f_1(\mathbf{t})},\\
& J_{\mathrm{GW}, \mathrm{FJRW}}^{(W, G)}(\mathbf{\tau}_2(\mathbf{t}), z) = \frac{I_{\mathrm{GW}, \mathrm{FJRW}}^{[E/\langle \sigma_E \rangle], (W_2, \langle J_2, \sigma_K\rangle)}(\mathbf{t}, z)}{f_2(\mathbf{t})}.
\end{align*}
\end{prop}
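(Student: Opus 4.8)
The plan is to mimic the argument already carried out for the pure FJRW mirror theorem, reducing both statements to the uniqueness characterization of the J-function as the unique slice of the Lagrangian cone. The essential content is that each mixed I-function lies on the corresponding twisted Givental cone $\mathcal{L}^{\mathrm{FJRW}, \mathrm{GW}}$ or $\mathcal{L}^{\mathrm{GW}, \mathrm{FJRW}}$; everything after that is formal.

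First I would show that $I_{\mathrm{FJRW}, \mathrm{GW}}$ lies on $\mathcal{L}^{\mathrm{FJRW}, \mathrm{GW}}$. Using the fibre-product presentation of the moduli space over $\overline{\mathcal{M}}_{0, n}$ and the factorization $\psi_{\mathrm{FJRW}, \mathrm{GW}, i} = \psi_{\mathrm{FJRW}, i}\psi_{\mathrm{GW}, i}$, the untwisted J-function is that of the toric stack $\mathcal{X}_{\theta}$ and lies on the untwisted cone by the toric mirror theorem of \cite{CCIT1}. The twisting is effected by the decomposed operator $\Delta$ written just above the I-function formula, whose two factors act on the $W_1$-line bundles (the FJRW-type Euler-class twist) and on the K3 sectors (the Gromov-Witten modification factor) respectively. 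Since these act on disjoint summands, I would check that $\Delta$ is symplectic exactly as in the pure FJRW proposition: the Bernoulli identity $B_{d+1}(1-x) = (-1)^{d+1} B_{d+1}(x)$ together with $i_k(h^{-1}) + q_k = 1 - (i_k(h) + q_k)$ handles the elliptic factor, and the corresponding duality of the K3 inertia sectors under the Chen--Ruan pairing handles the other. One then verifies that $\Delta$ carries $\mathcal{L}^{\mathrm{un}}$ to $\mathcal{L}^{\mathrm{tw}}$. The same argument, with the roles of the two factors exchanged, gives $I_{\mathrm{GW}, \mathrm{FJRW}} \in \mathcal{L}^{\mathrm{GW}, \mathrm{FJRW}}$.

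Granting this, the remaining steps are formal and parallel the pure cases. The paper has already observed that the exponent of $z$ equals $1$ only for the identity-sector term, so each mixed I-function may be written $I^{\circ} = f(\mathbf{t}) z \phi_{\mathrm{id}} + \mathbf{g}(\mathbf{t}) + \mathcal{O}(z^{-1})$ with coefficient of $z^0$ linear in $\mathbf{t}$. Because $\mathcal{L}^{\circ}$ is a cone, dividing by $f$ keeps the point on the cone, producing an element of the form $z\phi_{\mathrm{id}} + \mathbf{\tau}_i(\mathbf{t}) + \mathcal{O}(z^{-1})$ with $\mathbf{\tau}_i = \mathbf{g}_i/f_i$. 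Finally I would invoke the characterization of the J-function as the unique function whose graph is the slice $\mathcal{L}^{\circ} \cap (-\phi_0 z \oplus \mathcal{H}^{\circ} \oplus \mathcal{V}_-^{\circ})$: matching the input $\mathbf{\tau}_i(\mathbf{t})$ then forces $J^{\circ}(\mathbf{\tau}_i(\mathbf{t}), z) = I^{\circ}(\mathbf{t}, z)/f_i(\mathbf{t})$, which is the claim.

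The main obstacle is the first step, and specifically the fact flagged in the text that the virtual class on the fibre product need not split as a naive external product. One must verify that the combined FJRW$\times$GW twist encoded by $\Delta$ is genuinely the twist realized by the \emph{actual} mixed virtual class — via the cosection-localized class in genus zero and the additivity of $c_{\mathrm{top}} R^1\pi_*$ — rather than merely a formal product of the two separate twists. This is where I would lean on the orbifold Grothendieck--Riemann--Roch computation of \cite{CCIT2} together with the genus-zero decomposition of the virtual class, to confirm that $\Delta$ really intertwines the untwisted and mixed-twisted cones.
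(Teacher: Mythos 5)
Your proposal follows essentially the same route as the paper: reduce to showing the mixed I-functions lie on their Lagrangian cones (untwisted theory identified with the Gromov--Witten theory of the toric stack $\mathcal{X}_{\theta}$ via \cite{CCIT1}, then twisting by the factored operator $\Delta$ using the quantisation machinery of the FJRW section together with orbifold Grothendieck--Riemann--Roch and \cite{CCIT2}), and then conclude formally from the form $f(\mathbf{t})z\phi_{\mathrm{id}} + \mathbf{g}(\mathbf{t}) + \mathcal{O}(z^{-1})$ and the uniqueness of the J-function as the slice of the cone. You even flag and resolve the same subtlety the paper does --- that the virtual class on the fibre product need not split naively, handled via the genus-zero cosection-localized class and additivity of $c_{\mathrm{top}}R^1\pi_*$ --- so your argument matches the paper's proof in both structure and substance.
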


\section{The Correspondence}

\subsection{The State Space Correspondence}

Artebani, Boissi\`ere and Sarti have proved in \cite{ABS} that $\mathcal{H}_{\mathrm{FJRW}} \cong \mathcal{H}_{\mathrm{GW}}$ for all Borcea-Voisin orbifolds except those for which $6 \vert w_0$, by constructing birational models for them. There are three such Fermat cases, where $(w_0, w_1, w_2, w_3) = (6, 3, 2, 1), (6, 4, 1, 1), (12, 8, 3, 1)$. We compute these too, along similar lines to how we found the state space for $(3, 1, 1, 1)$. The argument for $\mathcal{H}_{\mathrm{FJRW},\,\mathrm{GW}}$ and $\mathcal{H}_{\mathrm{GW}, \mathrm{FJRW}}$ follow identically.

Consider the possible restrictions of $W$ to fixpoint sets of elements of $G$ which have non-zero-dimensional $G$-invariant Milnor ring. They must satisfy the following properties: that $X^2$ appears if and only if $x^2$ appears (since $\sigma$ must be preserved), and that if $X^2$ (resp. $x^2$) appears, then other terms in $Y, Z$ (resp. $y, z, w$) must appear (from the invariance under $J_1$, resp. $J_2$). Splitting the coordinates $X, Y, Z$ from $x, y, z, w$ and finding the corresponding fixpoint spaces, we list the possibilities these conditions leave and tabulate their contributions below.
$$\begin{tabular}{|l|l|l|}
\hline
$W'$ & $\#\{g \in G\vert W\vert_{\mathrm{Fix}(g)} = W'\}$ & $\mathrm{dim}(\mathcal{H}_{\mathrm{FJRW}}^g)$\\
\hline
$X^2 + Y^4 + Z^4 + x^2 + y^4+ z^6 + w^{12}$ & 1 & 30\\
$X^2 + Y^4 + Z^4 + x^2 + y^4$ & 2 & 2\\
$X^2 + Y^4 + Z^4 + x^2 +z^6 $ & 1 & 2\\
$Y^4 + Z^4 + y^4+ z^6 + w^{12}$ & 1 & 42\\
$Y^4+Z^4+y^4$ & 2 & 0\\
$Y^4 + Z^4 + z^6$ & 1& 0\\
$Y^4+Z^4$ & 2 & 3\\
$y^4+z^6 + w^{12}$ & 1 & 14\\
$-$ (narrow) & 14 & 1\\
\hline
\end{tabular}$$
This gives a total dimension of 112. Checking the group elements of degree 2 with non-trivial restricted Milnor rings, we find they contribute dimension 9, so from the symmetries of the FJRW bi-degree with respect to inverses and swapping indices, we find the following diamond
$$\begin{tabular}{llllllll}
    & & & 1 & & & \\
   & & 0 & & 0 & & \\
   & 0 &  & 9 &  & 0 & \\
   1 & & 45 & & 45 & & 1 \\
   & 0 &  & 9 &  & 0 & \\
   & & 0 & & 0 & & \\
   & & & 1 & & & \\
 \end{tabular}$$
 which is exactly the Hodge diamond on the Gromov-Witten side.
 
For $X^2+Y^4+Z^4+ x^2+y^3+z^{12}+w^{12}$, we have the following table.
$$\begin{tabular}{|l|l|l|}
\hline
$W'$ & $\#\{g \in G\vert W\vert_{\mathrm{Fix}(g)} = W'\}$ & $\mathrm{dim}(\mathcal{H}_{\mathrm{FJRW}}^g)$\\
\hline
$X^2 + Y^4 + Z^4 + x^2 + y^3+ z^{12} + w^{12}$ & 1 & 40\\
$X^2 + Y^4+Z^4 + x^2 + y^3$ & 1 & 0\\
$y^3$ & 4 & 0\\
$Y^4 + Z^4 + y^3+ z^{12} + w^{12}$ & 1 & 60\\
$Y^4 + Z^4$  & 4 & 3\\
$y^3+ z^{12} + w^{12}$ & 1 & 20\\
- (narrow) & 12 & 1\\
\hline
\end{tabular}$$
There are 11 sectors of degree 2, and we have the same symmetries from the degree formula. The FJRW diamond and the Hodge diamond on the Gromov-Witten side are both found to be as follows.
 $$\begin{tabular}{llllllll}
    & & & 1 & & & \\
   & & 0 & & 0 & & \\
   & 0 &  & 11 &  & 0 & \\
   1 & & 59 & & 59 & & 1 \\
   & 0 &  & 11 &  & 0 & \\
   & & 0 & & 0 & & \\
   & & & 1 & & & \\
 \end{tabular}$$
 Finally, for $X^2+Y^4+Z^4+ x^2+y^3+z^8+w^{24}$, we have the following table.
$$\begin{tabular}{|l|l|l|}
\hline
$W'$ & $\#\{g \in G\vert W\vert_{\mathrm{Fix}(g)} = W'\}$ & $\mathrm{dim}(\mathcal{H}_{\mathrm{FJRW}}^g)$\\
\hline
$X^2 + Y^4 + Z^4 + x^2 + y^3+ z^8 + w^{24}$ & 1 & 28\\
$X^2 + Y^4 + Z^4 + x^2 + y^3$ & 3 & 0\\
$X^2 + Y^4 + Z^4 + x^2 + z^8$ & 2 & 2\\
$Y^4+Z^4 + y^3 + z^8 + w^{24}$ & 1 & 42\\
$Y^4 + Z^4+y^3$ & 3 & 0\\
$Y^4 + Z^4 + z^8$ & 2 & 0\\
$Y^4 + Z^4$ & 6 & 3\\
$y^3 + z^8 + w^{24}$ & 1 & 14\\ 
$y^3$ & 6 & 0\\
$z^8$ & 2 & 0\\
- (narrow) & 22 & 1\\
\hline
\end{tabular}$$
There are 19 sectors of degree 2, and we have the same symmetries from the degree formula. Again, the FJRW diamond and Hodge diamond on the Gromov-Witten side are both found to be as follows.
 $$\begin{tabular}{llllllll}
    & & & 1 & & & \\
   & & 0 & & 0 & & \\
   & 0 &  & 19 &  & 0 & \\
   1 & & 43 & & 43 & & 1 \\
   & 0 &  & 19 &  & 0 & \\
   & & 0 & & 0 & & \\
   & & & 1 & & & \\
 \end{tabular}$$
The procedure and results are identical for $X^2+Y^3+Z^6$ in place of $X^2+Y^4+Z^4$ (though the narrow subspaces are larger, as computed previously).

The narrow, ambient and narrow mixed state spaces all certainly isomorphic as graded vector spaces with pairing, as has been made clear by computing explicit bases in the previous three sections.

\subsection{The Quantum Correspondence}

Here we relate the I-functions of the Gromov-Witten, mixed, and FJRW theories. All I-functions in this section are taken to be the genus zero, narrow/ambient I-functions.

\begin{prop}
\textbf{(A partial Landau-Ginzburg/Calabi-Yau correspondence for Fermat-type Borcea-Voisin orbifolds, with respect to the elliptic curve)} Let $\mathcal{Y}$ be a Borcea-Voisin orbifold $[E \times K/\mathbb{Z}_2]$ given by $E = \{X^2 + Y^4+Z^4 = 0\},$ and $K$ a Fermat-type K3 surface with Nikulin involution, with $1 \in \mathbb{Z}_2$ acting by sending $(X, x) \mapsto (-X, -x)$. Then there exists an analytic continuation ${I'}_{\mathrm{GW}}^{\mathcal{Y}}$ of $I_{\mathrm{GW}}^{\mathcal{Y}}$ and a symplectic transformation $\mathbb{U}_E:H_{\mathrm{FJRW},\,\mathrm{GW}}^{(W_1, [K/\mathbb{Z}_2], G)} \to H_{\mathrm{GW}}$  sending  $I_{\mathrm{FJRW},\,\mathrm{GW}}^{(W, [K/\mathbb{Z}_2], G)}$ to ${I'}^{\mathcal{Y}}_{\mathrm{GW}}$. 
\end{prop}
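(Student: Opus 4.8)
The plan is to exploit the fact that $I_{\mathrm{GW}}^{\mathcal{Y}}$ and $I_{\mathrm{FJRW},\,\mathrm{GW}}$ are built from the \emph{same} K3 factor — the data $L_{\mathbf{b}}$, $G_{\mathbf{b}}$ together with the sum over $(b,c,\mathbf{k}) \in \Lambda E_{\mathbf{b}}^S([K/\langle\sigma_K\rangle])$ — and differ only in the elliptic-curve factor, which in $I_{\mathrm{GW}}^{\mathcal{Y}}$ is the $q_1$-series carrying $K_{\mathbf{b}}$ and $F_{\mathbf{b}}(a,c,\mathbf{k})$ (with the classes $D_E$ present), and in $I_{\mathrm{FJRW},\,\mathrm{GW}}$ is the hypergeometric series in $t_3, t_\sigma$ built from the charges $i_k(h)$ of $\langle J_1,\sigma_E\rangle$. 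Since the correspondence is required \emph{with respect to the elliptic curve}, the K3 factor is a spectator, and the whole problem reduces to matching the two elliptic-curve factors at each fixed value of the coupling index $c$ (equivalently $n_\sigma$) shared between the two halves. Because $D_E^2 = 0$, expanding the ratios of $\Gamma$-functions in $D_E/z$ terminates, so for each power of $D_E$ the elliptic factor is an honest hypergeometric series in the single Kähler variable $q_1 = e^{2t_1 + t_2 + t_3}$.

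First I would isolate this elliptic series and recognise it, near $q_1 = 0$, as a solution of the Picard–Fuchs (GKZ) system of the quartic family $\{X^2 + Y^4 + Z^4 = 0\}$. Following the Chiodo–Ruan and Coates–Iritani–Tseng technique, I would rewrite the sum over $a \geq 0$ as a Mellin–Barnes contour integral, using the numerator factor $\Gamma(4D_E/z + 4a + 2c + 1)$ against the denominator factors $\Gamma(2D_E/z + 2a + c + 1)\Gamma(D_E/z + a + 1)^2$; the ratio of these $\Gamma$-functions supplies a kernel of the form $\Gamma(-s)\Gamma(s + \cdots)$ whose poles at $s \in \mathbb{Z}_{\geq 0}$ reproduce the large-volume $q_1$-expansion.

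The analytic continuation ${I'}_{\mathrm{GW}}^{\mathcal{Y}}$ is then obtained by deforming the contour to the Landau–Ginzburg point and closing it around the \emph{other} family of poles, those coming from the quarter-integer shifts attached to the charge $q_X = 1/4$ of the variable $X$ in $W_1 = X^2 + Y^4 + Z^4$. These residues assemble exactly into the factor $\Gamma(\tfrac{1}{4} + \tfrac{n_3}{2} + \tfrac{n_\sigma}{4})^2 / \Gamma(\tfrac{1}{4} + \langle\tfrac{n_3}{2} + \tfrac{n_\sigma}{4}\rangle + 1)^2$ and the powers of $t_3, t_\sigma$ appearing in $I_{\mathrm{FJRW},\,\mathrm{GW}}$, with the floor-function powers of $z$ emerging from the integer parts of the pole locations. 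Collecting the $z$-dependent coefficients multiplying each FJRW basis class $\phi_{h_{n_3,n_\sigma}}$ defines the linear map $\mathbb{U}_E$, and matching it against the narrow basis of $\mathcal{H}_{\mathrm{FJRW},\,\mathrm{GW}}^{(W_1,[K/\mathbb{Z}_2],G)}$ listed above pins it down explicitly.

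Finally I would verify that $\mathbb{U}_E$ is symplectic for $\Omega$. This is the conceptual heart, and I expect it to follow from the reflection identity $\Gamma(x)\Gamma(1-x) = \pi/\sin(\pi x)$ exactly as in the symplecticity argument for the twisting map $\Delta$: the continuation coefficients attached to a sector $h$ and its inverse $h^{-1}$ combine through this identity so that the $z$-powers and phase factors cancel in the residue pairing. The main obstacle will be the analytic continuation itself — setting up the Mellin–Barnes integral with $D_E/z$ treated as a nilpotent formal parameter, controlling the contour deformation and convergence, and, the genuinely delicate point, performing the continuation \emph{compatibly with the shared coupling variable $c$}, so that the half-integer ($\mathbf{1}_\sigma$) sectors carrying the $z^{-1/2}$ prefactors of $K_\sigma$ and $L_\sigma$ map correctly onto the $\sigma_E$-twisted FJRW sectors while the K3 factor is carried along unchanged across every twisted sector at once.
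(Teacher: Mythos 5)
Your proposal is essentially the paper's own proof: hold the K3 data ($L_{\mathbf{b}}$, $G_{\mathbf{b}}$, the sum over $\Lambda E^S_{\mathbf{b}}$) fixed as a spectator, continue the elliptic factor in $\tilde{q}_1$ by Mellin--Barnes (the paper uses the kernel $1/(e^{2\pi i s_E}-1)$ rather than a $\Gamma(-s)$ kernel, closing the contour on the poles of $\Gamma(4D_E/z+4s_E+2c+\sum_j k_j+1)$, which sit at quarter-integer spacing and contribute because $D_E^2=0$ kills the integer-pole residues), match the resulting residues --- via the identifications $2n_1+1\mapsto m$, $n_\sigma\mapsto 2c+\sum_j k_j$ and $\tilde{t}_1=\tfrac{1}{2}\tilde{q}_1^{-1/4}$ --- with the coefficients $\omega^{\mathrm{FJRW},\,\mathrm{GW}}_{n_3,n_\sigma,\mathbf{b}}$ to define $\mathbb{U}_E$, and verify symplecticity with reflection-type Gamma identities ($\Gamma(\tfrac14)\Gamma(\tfrac34)=\sqrt{2}\,\pi$), the $\sigma_E$-sectors giving $1\times 1$ blocks $-ie^{-z}$. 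Two small corrections to your write-up: the quarter-integer pole spacing comes from the degree-$4$ twisting factor (equivalently the charges $q_Y=q_Z=\tfrac14$), not from $X$, whose charge is $\tfrac12$; and since $\mathbb{U}_E$ is not diagonal, the symplecticity check is not the diagonal $h\leftrightarrow h^{-1}$ cancellation used for $\Delta$ but a genuine $2\times 2$ block computation (bases $\{\phi_{J_1}\mathbf{1}_b,\phi_{J_1^3}\mathbf{1}_b\}$ against $\{\mathbf{1}_b,D_E\mathbf{1}_b\}$) carried out after Taylor-expanding the coefficients to linear order in the nilpotent $D_E/z$.
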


\begin{proof}
It is clear that the GW and FJRW state spaces are isomorphic as graded inner product spaces, as are $\mathcal{H}_{\mathrm{GW}}^{\mathrm{amb}}$ and $\mathcal{H}_{\mathrm{FJRW}}^{\mathrm{nar}}$. %We specify our correspondence $\mathcal{H}_{\mathrm{GW}}^{\mathrm{amb}} \to \mathcal{H}_{\mathrm{FJRW}}^{\mathrm{nar}}$ to be given by
%\begin{align*}
%& D_E  \mapsto J_1^3J_2\\
%& D_K \mapsto J_1J_2^3\\
%& \Sigma\mathbf{1}_{\sigma} \mapsto \sigma J_1^2J_2^2
%\end{align*}

By convention, we set the Novikov variables $q_1, q_2, q_3 = 1$ and vary $\tilde{q}_1 = e^{2t_1+t_2 + t_3}$, $\tilde{q_2} = e^{3t_4 + t_5 + t_6 + t_7}$, $\tilde{q_3} = e^{t_1 + t_4 + 2t_8}t_{\sigma}$. There are two fundamental issues here that must be addressed before performing the calculation itself. 
%First, analytically continuing $I_{\mathrm{GW}}^{\mathcal{Y}}$ from $\tilde{q_1} = 0$ to $\infty$ to get ${I'}_{\mathrm{GW}}^{\mathcal{Y}}$, we shall find a linear transformation $\mathbb{U}_E$ sending $I_{\mathrm{FJRW},\,\mathrm{GW}}$ to ${I'}_{\mathrm{GW}}^{\mathcal{Y}}$. %Then, analytically continuing ${I'}_{\mathrm{GW}}^{\mathcal{Y}}$ to ${I''}_{\mathrm{GW}}^{\mathcal{Y}}$, we shall prove that we can find a symplectic transformation $\mathbb{U}_K$ sending $I_{FJRW}^{(W, G)}$ to ${I''}_{\mathrm{GW}}^{\mathcal{Y}}$.

Firstly there is the issue of convergence in these variables. Applying a ratio test to $I_{\mathrm{GW}}(\mathcal{Y})$ for $a$, holding $b, c$ fixed, we get a radius of convergence $\tilde{q}_1<\frac{1}{4^3}$. Similarly for $b$ we find convergence for $\tilde{q}_2 < \frac{1}{6^4}$, and for $c, k_j$ we have convergence for $\vert \tilde{q}_3 \vert, \vert x_i\vert < 1$, $i = 1, \ldots, m$. Each term separates its dependence on $a$ and $b$ into separate factors, so we have convergence when all of these conditions hold.%But this is not enough: for several variables, the region of convergence could be a polydisc, but it could still be empty. Therefore, we will analytically continue in succession, and hold the later variables as purely formal.

We shall analytically continue in the variables $\tilde{q_1}$ (corresponding to $a$ and the hyperplane divisor class in the elliptic curve factor), via the Mellin-Barnes method.

The function $\frac{1}{e^{2\pi i w} -1}$ has only simple poles at the integers, at each of which it has residue 1. Varying $a$ then, and suppressing the dependence on $c, \mathbf{k}$ and $z$, we may write our $I$-function as a $\mathcal{H}_{\mathrm{GW}}[[z^{-1}]]$-linear combination of contour integrals of the form
$$ \int_{C_E} \frac{1}{e^{2 \pi i s_E} -1} F(s_E)  \, \mathrm{d}s_E.$$
where the cohomology classes are taken to be complex variables, and the contours are taken to be any curve in the $s_E$-plane, stretching from $i\infty$ to $-i\infty$ with a detour so that all singularities of the Gamma functions appearing in $F$ to the left, and another detour so that all non-negative integers are to the left, and all positive integers are to the right. The picture below illustrates this.

\begin{center}
\includegraphics[width=3in]{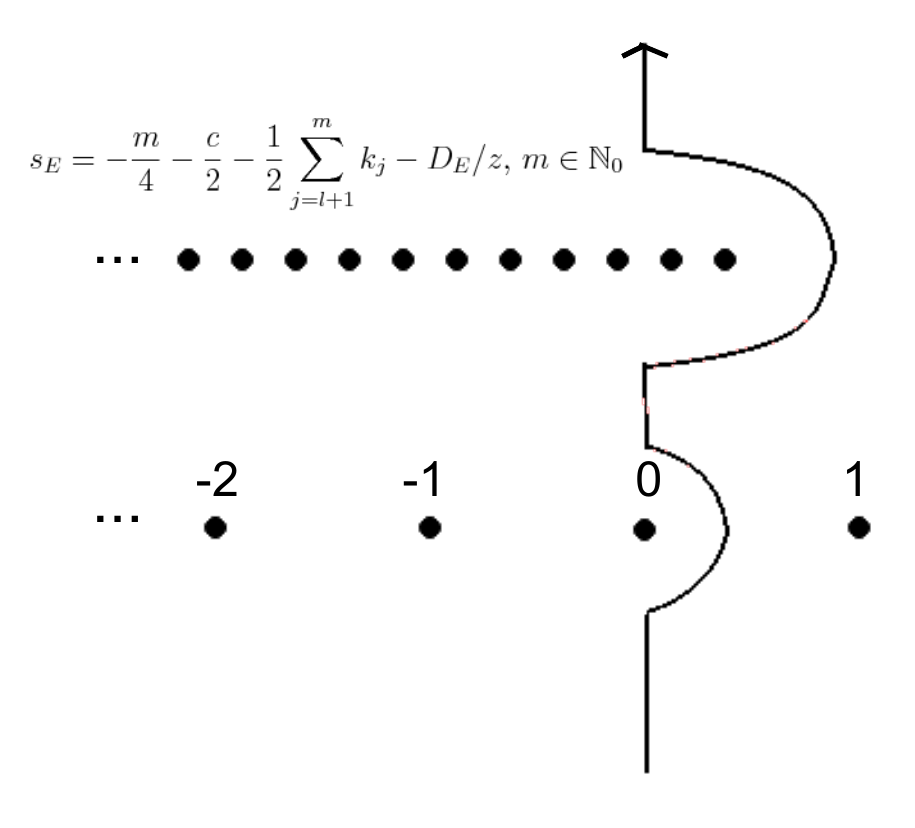}
\end{center}%(these being given by condition that respectively $2D_E/z + 4a + 2b$ or $w_0D_K/z + 2w_0b + 2c + 1$ are all non-positive integers)

% and then in turn, varying $b$, as $$ \int_{C_E} \frac{1}{e^{2 \pi i s_E} -1}  F(s_E) \, \mathrm{d}s_E \int_{C_K} \frac{1}{e^{2 \pi i s_K} -1} G(s_K) \mathrm{d}s_K,$$ where the cohomology classes are taken to be complex variables, and the contours are taken to be any curve in the $s_E$-plane  and $s_K$-plane respectively, stretching from $i\infty$ to $-i\infty$ with detours so that all singularities of the Gamma functions are to the left (these being given by condition that respectively $2D_E/z + 4a + 2b$ or $w_0D_K/z + 2w_0b + 2c + 1$ are all non-positive integers) and another detour so that all non-negative integers are to the left, and all positive integers are to the right.  Consider first the intergal over $s_E$. 
 
Then closing the curve on the right gives gives the corresponding term of $I_{\mathrm{GW}}$, but closing it on the right gives a different sum. Here, we need to sum over all the negative integers; but from the functional equation of the Gamma function these are all multiples of $D_E^2 = 0$ and so these residues are zero. We are left with the residues at the $s_E$ for which $$4D_E/z + 4s_E + 2c +\sum_{j=l+1}^m k_j  = -m, \, m \in \mathbb{Z}_{>0},$$ that is, $s_E = -\frac{m}{4} - \frac{c}{2} -\frac{1}{4}\sum_{\mu=1}^mk_{\mu}s_{\mu, 1} - D_E/z$. Note that $\sum_{\mu = 1}^m k_{\mu}s_{\mu, 1} = -\frac{1}{2}\sum_{j = l+1}^m k_j$. The residues of the Gamma function are well known, and we have $$\mathrm{Res}_{s_E = -\frac{m}{4} - \frac{c}{2} +\frac{1}{4}\sum_{j=l+1}^mk_j - D_E/z}\Gamma(2D_E/z + 4a+2c +\sum_{j=l+1}^m k_j+1) = -\frac{1}{4}\frac{(-1)^m}{\Gamma(m)}.$$ 

The factor $1/(e^{2\pi i s_E} -1)$ is analytic at these points, but must be multiplied by a factor of $2 \pi i$ from the residue theorem.

We may then rewrite the sum over $a$ as a sum over $m$, with $$ -\frac{1}{4}\frac{(2\pi i)e^{2\pi i(\frac{m}{4} + \frac{c}{2} +\frac{1}{4}\sum_{j=l+1}^m k_j)}}{e^{-2\pi i D_E/z} -e^{2\pi i (\frac{m}{4} + \frac{c}{2} -\frac{1}{4}\sum_{j=l+1}^m k_j)}}\frac{(-1)^m}{\Gamma(m)\Gamma(1-\frac{m}{2})\Gamma(1-\frac{m}{4}-\frac{c}{2} -\frac{1}{4}\sum_{j=l+1}^m k_j)^2}$$ replacing all factors in the sum involving $a$, and the exponentials involving $t_1 ,t_2, t_3$ replaced by $e^{-\frac{m}{4}(2t_1+t_2+t_3)}e^{-\frac{c}{2}(t_2+t_3)}$. 

Using the identity $\Gamma(x)\Gamma(1-x) = \frac{\pi}{\mathrm{sin}(\pi x)}$, we may put this all together as $$-\frac{1}{4}(-1)^m \frac{\Gamma(\frac{m}{2})\mathrm{sin}(\frac{\pi m} {2})\Gamma(\frac{m}{4}+\frac{c}{2} +\frac{1}{4}\sum_{j=l+1}^m k_j)^2\mathrm{sin}(\pi(\frac{m}{4}+ \frac{c}{2}+\frac{1}{4}\sum_{j=l+1}^m k_j))^2}{\pi^3}.$$ Clearly this is zero when $m$ is even. Let $\mathbf{k}' = (k_{l+1}, \ldots, k_m)$. Define
\begin{align*}
&\mu := m\, \mathrm{mod} \,4, m = 4l_m + \mu\\
&\tilde{\sigma} = (m+2c+\sum_{j=l+1}^m k_j)\,\mathrm{mod}\, 4.
\end{align*}
Clearly $E_1(m, c, \mathbf{k})$ depends only on $(\mu, \tilde{\sigma}).$ Then we may split the factors of form $\mathrm{sin}(\pi x)/\pi$ again to give Gamma functions depending only on $\mu, \tilde{\sigma}$:
 \begin{align*}
 &-\frac{1}{4}\sum_{\substack{\mu=1, 3\\ \tilde{\sigma} = 1, 2, 3}}E_1(\mu, \tilde{\sigma})\frac{(-1)^{\mu}}{\Gamma(1-\frac{\mu}{2})\Gamma(1-\frac{\tilde{\sigma}}{4})^2}\\
 & \sum_{\substack{l_m, \in \mathbb{N}_0\\m := 4l_m + \mu, \, c \in \frac{1}{2}\mathbb{N}_0,\, \mathbf{k}' \in \mathbb{N}_0^{m-l}\\m+2c + \sum_{j=l+1}^m k_j \, \mathrm{mod} \, 4 = \tilde{\sigma}}} \frac{(-1)^{l_m}}{\Gamma(\frac{\mu}{2})\Gamma(\frac{\tilde{\sigma}}{4})^2}\frac{\Gamma(\frac{m}{2})\Gamma(\frac{m}{4} + \frac{c}{2}\sum_{j=l+1}^m k_j)^2}{\Gamma(m)}
  \end{align*}
 Note that the terms where $\mu$ is even and $\tilde{\sigma} = 0$ are zero.

%$\mu := m\, \mathrm{mod} \,4$, with $m = 4l_m + \mu$, $\tilde{c} = c\, \mathrm{mod}\, 2w_0$ with $c = 2w_0l_c + \tilde{c}$, and $\tilde{\mathbf{k}'} := (\tilde{k}_{l+1}, \ldots, \tilde{k}_m) = \mathbf{k}'\, \mathrm{mod}\, 2$, with $\tilde{\mathbf{k}'} = 2\mathbf{l_{k'}}+\tilde{\mathbf{k}'}$. Let $$E_1(m, c, \mathbf{k}') :=  \frac{(2\pi i)e^{\frac{m}{4} + \frac{c}{2}+\frac{1}{4}\sum_{j=l+1}^m k_j}}{e^{-2\pi i D_E/z} -e^{2\pi i (\frac{m}{4} + \frac{c}{2}+\frac{1}{4}\sum_{j=l+1}^m k_j)}}.$$ Then clearly $E_1(m, c, \mathbf{k}') = E_1(\mu, \tilde{c}, \tilde{\mathbf{k}'})$.

Now applying the identity $\Gamma(\frac{m}{2})/\Gamma(m) = 2\sqrt{\pi}/(2^m\Gamma(\frac{m}{2}+\frac{1}{2}))$ we may rewrite this as 
\begin{align*}
& -\frac{2\sqrt{\pi}}{4}  \sum_{\substack{\mu=1, 3\\ \tilde{\sigma} = 1, 2, 3}} K_{\mathbf{b}_{\mu, \tilde{\sigma}}} E_1(\mu, \tilde{\sigma})\frac{(-1)^{\mu}}{\Gamma(1-\frac{\mu}{2})\Gamma(1-\frac{\tilde{\sigma}}{4})^2} e^{-\frac{m}{4}D_E(2t_1+t_2+t_3)}\\
\times & \sum_{\substack{l_m, \in \mathbb{N}_0\\m := 4l_m + \mu, \\ c \in \frac{1}{2}\mathbb{N}_0, b \ge -c/3, \mathbf{k} \in \mathbb{N}_0^{m-l}\\m+2c + \sum_{j=l+1}^m k_j \, \mathrm{mod} \, 4 = \tilde{\sigma}\\v_K^S(b, c, \mathbf{k}) = \mathbf{b}}} \frac{(-1)^{l_m}}{\Gamma(\frac{\mu}{2})\Gamma(\frac{\mu}{4} + \frac{c}{2}+ \frac{1}{4}\sum_{j=l+1}^m k_j)^2}
\frac{\Gamma(\frac{m}{2})\Gamma(\frac{m}{4} + \frac{c}{2}\sum_{j=l+1}^m k_j)^2}{2^m\Gamma(\frac{m}{2}+\frac{1}{2})\Gamma(2c+1)}\\
\times & L_{\mathbf{b}}(q_2^b q_3^c \prod_{j=1}^mx_j^{k_j})e^{b(\sum_{i=1}^nw_it_i)+c(t_4+2t_8)}G(b, c, \mathbf{k}')\mathbf{1}_{\mathbf{b}}.
% (K_0\sum_{\substack{\mu = 1, 3\\ \tilde{c} \in \frac{1}{2}\mathbb{Z}_{2w_0}\backslash \mathbb{Z}_{2w_0}}} + \, K_2 \sum _{\substack{\mu = 1, 3\\\nu = 1, 3, 5\\ \tilde{c} \in \frac{1}{2}\mathbb{Z}_6\backslash \mathbb{Z}_6}})  \frac{(-1)^{\mu + \nu}E(\mu, \nu, \tilde{c})}{\Gamma(1-\frac{\mu}{2})\Gamma(1-(\frac{\mu}{4}+\frac{\tilde{c}}{2}))^2\Gamma(1-\frac{\nu}{2})\Gamma(1-(\frac{\nu}{6}+\frac{\tilde{c}}{2}))^3}\\
%& \times \sum_{\substack{l_m, l_n, l_c \ge 0}} \frac{1}{2^m2^n}\ \frac{(-1)^{l_m+l_n+l_c}Q_1^mQ_3^c\Gamma(\frac{m}{4} + \frac{c}{2})^2\Gamma(\frac{n}{6}+ \frac{c}{3})^3}{\Gamma(\frac{\mu}{2})\Gamma(\frac{\mu}{4}+\frac{\tilde{c}}{2})^2\Gamma(\frac{\nu}{2})\Gamma(\frac{\nu}{6} + \frac{\tilde{c}}{2})^3\Gamma(\frac{m}{2}+\frac{1}{2})\Gamma(\frac{n}{2} + \frac{1}{2})\Gamma(1+2c)}.\\
\end{align*}
where $\mathbf{b}_{\mu, \tilde{\sigma}}$ is $\mathbf{1}_0$ for $\tilde{\sigma} = \frac{1}{4}, \frac{3}{4}$ and $\mathbf{1}_{\sigma}$ for $\tilde{\sigma} = \frac{1}{2}$. 

For any $g \in \langle J_2\rangle$, factorise $\mathbf{1}_{\sigma g}$ as $\mathbf{1}_{\sigma_E}\mathbf{1}_{\sigma_K g}.$ Identify the generators
\begin{align*}
& \phi_{J_1} \mapsto \mathbf{1}_0\\
& \phi_{J_1^3} \mapsto D_E\\
& \phi_{\sigma_E J_1^2} \mapsto \mathbf{1}_{\sigma_E},
\end{align*}
and the numbers
\begin{align*}
& 2n_{1}+1 \mapsto m\\
& n_{\sigma} \mapsto 2c + \sum_{j=l+1}^m k_j.
\end{align*}
Then if we set $\tilde{t}_1 = \frac{1}{2}\tilde{q_1}^{-\frac{1}{4}},$ the interior sum above is exactly $\frac{1}{2}e^{-z}\omega_{n_3, n_{\sigma}, \mathbf{b}}^{\mathrm{FJRW},\,\mathrm{GW}}$. This gives a linear map $$\mathbb{U}_E:\mathcal{V}_{\mathrm{FJRW},\,\mathrm{GW}} \mapsto \mathcal{V}_{\mathrm{GW}}$$ sending $I_{\mathrm{FJRW},\,\mathrm{GW}}$ to $I_{\mathrm{GW}}'$.

To represent this as a matrix, note that for each $\mathbf{b} \in \langle J_2 \rangle$, the map restricts to a $2 \times 2$ matrix between bases $\{ \phi_{J_1} \mathbf{1}_b, \phi_{J_1^3}\mathbf{b}\}$ and $\{\mathbf{1}_b, D_E\mathbf{1}_b\}$.  Each term in $\phi_h$ is given by $\tilde{\sigma} = \Theta_2(h)$, and both possibilities for $\mu$ appear. We must expand $K_{\mathbf{b}_{\mu, \tilde{sigma}}}$ and $E(\mu, \tilde{\sigma})$ in terms of $D_E/z$. We do this with the following Taylor series, which we only need up to linear order for now, since $D_E^2 = 0$:
 \begin{itemize}
\item $\Gamma(1+x) = 1 - \gamma x$.
\item $1/\Gamma(1+x) = 1 + \gamma x $.
\item $\Gamma(\frac{1}{2}+x) = \sqrt{\pi} - \sqrt{\pi}(2 \, \mathrm{ln}\, 2 + \gamma) x$.
\item $\frac{1}{e^{-x}-k} = \frac{1}{1-k} + \frac{1}{(1-k)^2} x$,
\end{itemize}
where $\gamma$ is the Euler-Mascheroni constant.

Let $\xi = e^{2 \pi i \tilde{\sigma}}$. For given $h$, the corresponding $\tilde{\sigma}$ may be given by $\Theta_2(h)$. Then we have for $k = 1, 3$
 \begin{align*}
 \phi_{J_1^{\tilde{\sigma}}} \mathbf{1}_b \mapsto & +\frac{2\sqrt{\pi}}{4}\frac{1}{2}(2\pi i) e^{-z} z^{\frac{\tilde{\sigma}-1}{2}}\bigg( \frac{\xi}{1-\xi}\sum_{\mu = 1, 3}\frac{1}{\Gamma(1-\frac{\mu}{2})\Gamma(1-\frac{\tilde{\sigma}}{4})^2}\\
 + & \big( \frac{2\pi i \xi}{(1-\xi)^2} + \frac{1}{(1 -\xi)}(-2\gamma - \gamma - \gamma + 4\gamma)\big)\sum_{\mu = 1, 3} \frac{1}{\Gamma(1-\frac{\mu}{2})\Gamma(1-\frac{\tilde{\sigma}}{4})^2}D_E/z\bigg).
 \end{align*}  % \mathbf{1}_b \mapsto \frac{\pi\sqrt{\pi}e^{-z}}{2} \frac{1}{1-e^{2\pi i \Theta_2(h)}}$$
The constant in front reduces to $\frac{\pi}{4}$ since $$\frac{1}{\Gamma(\frac{1}{2})} + \frac{1}{\Gamma(-\frac{1}{2})} = \frac{1}{2\sqrt{\pi}}.$$ For the part corresponding to $\phi_{J_1}\mathbf{1_b}, \phi_{J_1^3}\mathbf{1_b}$, we get a $(2\times 2)$-matrix.
After scaling, and using the identity $\Gamma(\frac{3}{4}) = \frac{\sqrt{2}\pi}{\Gamma(\frac{1}{4})}$, direct computation shows this to be symplectic and degree preserving for $\mathrm{deg}z = 2$.

For $\phi_{\sigma_E}\mathbf{1}_{\sigma g}$, $g$ possibly the identity, we get a $(1 \times 1)$-matrix, since $D_E \mathbf{1}_{\sigma} = 0$, with entry simplifying drastically to $f(z):= -ie^{-z}$.  $f(z)f^*(-z) = 1,$ so this also symplectic, and is clearly degree-preserving.
Since it is the direct sum of symplectic matrices, the whole matrix $\mathbb{U}_E$ is also symplectic.
\end{proof}

\begin{rem}
The situation is more complex for the other elliptic curve, $E = \{ X^2 + Y^3 + Z^6 = 0\}$, since we may not so directly express this in the basis $\omega_{h, \mathbf{b}}^{\mathrm{FJRW},\,\mathrm{GW}}$ by the procedure above. This is because we cannot simply identify the appearances of $n_{J_1^3}i_k(J_1^3)$ with those of $2n_{J_1^3}q_k$ in $I_{\mathrm{FJRW},\,\mathrm{GW}}(\mathbf{t}, z)$ as before, since if $\Theta_k(h) = 2q_k + q_k$ then $h$ cannot be narrow, since 3 appears as an exponent of $Y$.
\end{rem}

\begin{prop}
\textbf{(A partial Landau-Ginzburg/Calabi-Yau correspondence for a Fermat-type Borcea-Voisin orbifolds, with respect to the K3 surface)} Let $\mathcal{Y}$ be the Borcea-Voisin orbifold $[E \times K/\mathbb{Z}_2]$ given by $E = \{X^2 + Y^4+Z^4 = 0\},$ and $K = \{x^2 + y^6 + z^6 + w^6 = 0\}$, with $1 \in \mathbb{Z}_2$ acting by sending $(X, x) \mapsto (-X, -x)$.  Then there exists an analytic continuation ${I'}_{\mathrm{GW}}^{\mathcal{Y}}$ of $I_{\mathrm{GW}}^{\mathcal{Y}}$ and a symplectic transformation $\mathbb{U}_K:H_{\mathrm{GW}, \, \mathrm{FJRW}} \to H_{\mathrm{GW}}$  sending  $I_{\mathrm{GW}\,\mathrm{FJRW}}$ to ${I'}^{\mathcal{Y}}_{\mathrm{GW}}.$
\end{prop}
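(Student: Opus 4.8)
The plan is to mirror the proof of the preceding proposition (the construction of $\mathbb{U}_E$) essentially verbatim, interchanging the roles of the two tensor factors: instead of continuing $I_{\mathrm{GW}}^{\mathcal{Y}}$ in the elliptic variable $\tilde q_1$, I would analytically continue in the K3 K\"ahler variable $\tilde q_2 = e^{3t_4 + t_5 + t_6 + t_7}$, the variable dual to the summation index $b$ and the hyperplane class $D_K$. The state-space identification $\mathcal{H}_{\mathrm{GW},\mathrm{FJRW}}^{\mathrm{comp}} \cong \mathcal{H}_{\mathrm{GW}}^{\mathrm{amb}}$ is again immediate from the explicit bases computed in Sections 3 and 4, so all the content lies in matching the two narrow/ambient I-functions. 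First I would invoke the ratio test (as for $\mathbb{U}_E$) giving convergence of $I_{\mathrm{GW}}^{\mathcal{Y}}$ for $\lvert \tilde q_2\rvert < 1/6^4$, then rewrite the sum over $b$, with $c$, $\mathbf{k}$, $z$ and the cohomology classes held as parameters, as a Mellin--Barnes contour integral
$$\int_{C_K} \frac{1}{e^{2\pi i s_K} - 1}\, F(s_K)\, \mathrm{d}s_K,$$
the contour separating the poles of the Gamma factors and the non-negative integers from the positive integers exactly as before.

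Next I would close the contour on the opposite side. Closing toward the non-negative integers recovers the relevant term of $I_{\mathrm{GW}}^{\mathcal{Y}}$; closing the other way picks up the negative-integer poles of the kernel and the poles of the numerator factor $\Gamma(6D_K/z + 6s_K + 2c + 1)$ at $s_K = -\tfrac{m+2c}{6} - D_K/z$ (with the usual $\sum_j k_j s_{j,4}$ correction), the latter furnishing the analytic continuation ${I'}_{\mathrm{GW}}^{\mathcal{Y}}$. For the former I would check that the order-$3$ vanishing supplied by $\Gamma(D_K/z + b + 1)^3$ beats the at-worst-simple poles of $\Gamma(6D_K/z + 6b + 2c + 1)$ and $\Gamma(3D_K/z + 3b + c + 1)$, so that the negative-integer residues lie in $\langle D_K^3\rangle = 0$ in the untwisted $c$-sectors and in $\langle D_K^2\mathbf{1}_{\sigma}\rangle = 0$ for the borderline half-integer $c$ in the $\sigma$-twisted sector. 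For the latter I would evaluate the Gamma residue (contributing $-\tfrac16\,(-1)^m/\Gamma(m)$), substitute $s_K$, and apply the reflection formula $\Gamma(x)\Gamma(1-x) = \pi/\sin(\pi x)$ to $\Gamma(3D_K/z + 3s_K + c + 1) \to \Gamma(1 - \tfrac{m}{2})$ (the $x^2$ coordinate) and to $\Gamma(D_K/z + s_K + 1)^3 \to \Gamma(1 - \tfrac{m+2c}{6})^3$ (the coordinates $y,z,w$). The factor $\sin(\pi m/2)$ forces $m$ odd, $m = 2n_3 + 1$; the duplication formula $\Gamma(\tfrac{m}{2})/\Gamma(m) = 2\sqrt{\pi}/(2^m\,\Gamma(\tfrac{m}{2}+\tfrac12))$ applied to the charge-$\tfrac12$ contribution produces the prefactor $2 = \Gamma(\tfrac12)/\Gamma(\tfrac32)$, while the three reflected factors become the $\Gamma(\tfrac16 + \tfrac{n_3}{3} + \tfrac{n_\sigma}{6})^3$ of $I_{\mathrm{GW},\mathrm{FJRW}}$ under the constraint $2c = n_\sigma$.

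With the continued series repackaged I would make the identifications parallel to $\mathbb{U}_E$: $m = 2n_3 + 1$, $n_\sigma \mapsto 2c + \sum_{j>l} k_j$, $\mu = m \bmod 6 \in \{1,3,5\}$ labelling the three narrow untwisted K3 sectors $\phi_{J_2}, \phi_{J_2^3}, \phi_{J_2^5}$, and a scaling $\tilde t_2 = \tfrac12\,\tilde q_2^{-1/6}$, so that the inner sum becomes $\tfrac12 e^{-z}\,\omega^{\mathrm{GW},\mathrm{FJRW}}_{\mathbf{b}, n_3, n_\sigma}$ and defines the linear map $\mathbb{U}_K$. Finally I would read off its matrix block by block over the spectator class $\mathbf{b} \in \mathrm{Box}([E/\langle \sigma_E\rangle])$: on the untwisted K3 sectors it is a $3\times 3$ block sending $\{\mathbf{1}_{\mathbf{b}}\phi_{J_2},\, \mathbf{1}_{\mathbf{b}}\phi_{J_2^3},\, \mathbf{1}_{\mathbf{b}}\phi_{J_2^5}\}$ to $\{\mathbf{1}_{\mathbf{b}},\, D_K\mathbf{1}_{\mathbf{b}},\, D_K^2\mathbf{1}_{\mathbf{b}}\}$, obtained by Taylor-expanding $L_{\mathbf{b}}$, the Gamma factors and $1/(e^{-x}-k)$ in $D_K/z$, while on the $\sigma_K$-twisted sectors it is a $2\times 2$ block sending $\{\phi_{\sigma_K J_2^2},\, \phi_{\sigma_K J_2^4}\}$ to $\{\mathbf{1}_{\sigma},\, D_K\mathbf{1}_{\sigma}\}$, one size larger than the $1\times 1$ twisted blocks of $\mathbb{U}_E$ because $D_K\mathbf{1}_{\sigma} \neq 0$ whereas $D_E\mathbf{1}_{\sigma} = 0$, and truncating at $D_K^2\mathbf{1}_{\sigma} = 0$.

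Here lies the main obstacle. Unlike the $2\times 2$, $D_E^2 = 0$ situation for $\mathbb{U}_E$, the K3 correspondence forces the $D_K/z$-expansion to \emph{second} order, since $D_K^2 \neq 0$ while $D_K^3 = 0$, and one must verify that the resulting $3\times 3$ blocks are simultaneously symplectic for $\Omega^{\mathrm{GW}}$ — whose Chen--Ruan pairing is anti-diagonal on $\{\mathbf{1}_{\mathbf{b}}, D_K\mathbf{1}_{\mathbf{b}}, D_K^2\mathbf{1}_{\mathbf{b}}\}$ — and degree-preserving for $\deg z = 2$. The quadratic Taylor bookkeeping, together with the special values such as $\Gamma(\tfrac34) = \sqrt{2}\,\pi/\Gamma(\tfrac14)$ needed to recognise the entries, is substantially heavier than in the elliptic case; that, plus the sector-by-sector pole/zero count ensuring the negative-integer residues really do vanish to the required order (automatic from $D_E^2 = 0$ for $\mathbb{U}_E$ but not here), is where the real work of the proof is concentrated.
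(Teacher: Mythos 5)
Your proposal follows the paper's proof essentially step for step: the same Mellin--Barnes continuation in $\tilde q_2$ with the substitution $s_K = -\tfrac{n+2c}{6} - D_K/z$ (plus the $k_j s_{j,4}$ corrections), the same reflection/duplication manipulations forcing the summation index odd and producing the prefactor $2 = \Gamma(\tfrac12)/\Gamma(\tfrac32)$, the same identifications $\phi_{J_2}\mapsto\mathbf{1}_0$, $\phi_{J_2^3}\mapsto D_K$, $\phi_{\sigma_K J_2^2}\mapsto\mathbf{1}_{\sigma_K}$, $n = 2n_3+1$, $n_\sigma = 2c$, the scaling $\tilde t_2 = \tfrac12\tilde q_2^{-1/6}$, and the same block-by-block symplectic verification via second-order Taylor expansion in $D_K/z$.

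Two remarks. First, your accounting of the discarded residues is garbled as written: you invoke the ``at-worst-simple poles'' of both $\Gamma(6D_K/z+6b+2c+1)$ and $\Gamma(3D_K/z+3b+c+1)$, but the latter sits in the \emph{denominator} of the I-function term, so it supplies zeros, not poles. Taken literally (order-3 vanishing against two simple poles) your count would leave residues only in $\langle D_K\rangle \neq 0$ and the argument would collapse. The correct count --- which your stated conclusions ($\langle D_K^3\rangle = 0$ untwisted, $\langle D_K^2\mathbf{1}_{\sigma}\rangle = 0$ twisted) do reflect --- is that the only genuine poles come from $\Gamma(6D_K/z+6b+2c+1)$, and for integer $c$ every such pole is cancelled by a zero of $1/\Gamma(3D_K/z+3b+c+1)$ (the conditions $2c \le 6j-1$ and $c \le 3j-1$ coincide when $c \in \mathbb{Z}$), giving net vanishing order at least $3$; for half-integer $c$ that compensation is unavailable and the net order is $2$, which suffices in the $\sigma$-sector. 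Second, your twisted blocks are $2\times2$, between $\{\phi_{\sigma_K J_2^2}, \phi_{\sigma_K J_2^4}\}$ and $\{\mathbf{1}_{\sigma}, D_K\mathbf{1}_{\sigma}\}$, whereas the paper's proof asserts a direct sum of $(3\times3)$ and $(1\times1)$ blocks. Your version is the defensible one: since $D_K\mathbf{1}_{\sigma}\neq 0$, nothing forces the twisted block to be diagonal (unlike the case of $\mathbb{U}_E$, where $D_E\mathbf{1}_{\sigma}=0$ does force $1\times1$ blocks), and it agrees with the $(2\times2)$ twisted blocks the paper itself uses for $\mathbb{U}$ in the two-parameter theorem that follows.
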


\begin{proof}
We shall analytically continue $I'_{\mathrm{GW}}(\mathbf{t}, z)$ from $\tilde{q_2} = 0$ to $\tilde{q_2} = 1$ as above, varying $b$ to $s_K$ and making the substitution $$s_K = \frac{-n}{2w_0} - \frac{c}{w_0} - \frac{\sum_{j=1}^m k_js_{j,4}}{w_0}$$ and obtain factors of the form
\begin{align*}
&-\frac{1}{6}\sum_{\substack{\nu=1, 3, 5\\ \tilde{\sigma} = 1, 2, 3}}E_2(\nu, \tilde{\sigma})\frac{(-1)^{\nu}}{\Gamma(1-\frac{\nu}{2})\Gamma(1-\frac{\tilde{\sigma}}{4})^2}\\
 & \sum_{\substack{l_n, \in \mathbb{N}_0\\n := 6l_m + \nu, \, c \in \frac{1}{2}\mathbb{N}_0\\n+2c \, \mathrm{mod} \, 4 = \tilde{\sigma}}} \frac{(-1)^{l_m}}{\Gamma(\frac{\mu}{2})\Gamma(\frac{\tilde{\sigma}}{4})^3}\frac{\Gamma(\frac{n}{2})\Gamma(\frac{n}{6} + \frac{c}{2})^3}{\Gamma(n)}
\end{align*}

Let $\nu = n\, \mathrm{mod}\, 6, \tilde{\sigma} = c\,\mathrm{mod}\, 4,$ and set $$E_2(\nu, \tilde{\sigma}) = E_2(n, c) = (-4\pi)\frac{e^{2\pi i(\frac{m}{4} + \frac{n}{6} + \frac{5c}{6})}}{(e^{-2\pi i D_E/z}- e^{2\pi i (\frac{m}{4} + \frac{c}{2})})(e^{-2\pi i D_K/z}- e^{2\pi i (\frac{n}{6} + \frac{c}{3})})}.$$
After the same manipulations, we obtain
\begin{align*}
& -\frac{2\sqrt{\pi}}{6}  \sum_{\substack{\nu=1, 3\\ \tilde{\sigma} = 1, 2, 3}} L_{\mathbf{b}_{\mu, \tilde{\sigma}}} E_2(\nu, \tilde{\sigma})\frac{(-1)^{\nu}}{\Gamma(1-\frac{\nu}{2})\Gamma(1-\frac{\tilde{\sigma}}{4})^2} e^{-\frac{m}{4}D_E(2t_1+t_2+t_3)}\\
\times & \sum_{\substack{l_n, \in \mathbb{N}_0\\m := 6l_m + \nu, \\ c \in \frac{1}{2}\mathbb{N}_0, a \ge -c/2\\n+2c\, \mathrm{mod} \, 6 = \tilde{\sigma}\\v_E^S(a, c) = \mathbf{b}}} \frac{(-1)^{l_n}}{\Gamma(\frac{\nu}{2})\Gamma(\frac{\nu}{6} + \frac{c}{2})^3}
\frac{\Gamma(\frac{n}{2})\Gamma(\frac{n}{6} + \frac{c}{2})^2}{2^n\Gamma(\frac{n}{2}+\frac{1}{2})\Gamma(2c+1)}\\
\times & K_{\mathbf{b}}q_2^a q_3^c e^{a(2t_1+t_2+t_3)+c(t_4+2t_8)}F(b, c, \mathbf{k}')\mathbf{1}_{\mathbf{b}}.
\end{align*}
Again, there is a factor of $\mathrm{sin}(\frac{n}{2}\pi)$ so only terms with odd $\nu$ appear.

If we identify the generators
\begin{align*}
&\phi_{J_2} \mapsto \mathbf{1}_0,\\
&\phi_{J_2^3} \mapsto D_K,\\
&\phi_{\sigma_K J_2^2} \mapsto \mathbf{1}_{\sigma_K},
\end{align*}
for the classes of degree 2, and the numbers
\begin{align*}
&2n_2+1 \mapsto n,\\
&n_{\sigma} \mapsto 2c,
\end{align*}
and set $\tilde{t}_2 = \frac{1}{2}\tilde{q}_2^{-\frac{1}{6}},$ the interior sum above is exactly $\frac{1}{2}e^{-z}\omega_{n_3, n_{\sigma}, \mathbf{b}}^{\mathrm{GW}, \, \mathrm{FJRW}}.$ This gives a linear map $$\mathbb{U}_K:\mathcal{V}_{\mathrm{GW},\,\mathrm{FJRW}} \mapsto \mathcal{V}_{\mathrm{GW}}$$ sending $I_{\mathrm{GW}, \mathrm{FJRW}}$ to $I'_{\mathrm{GW}}.$ This time, we expand the analogous factors in $D_K/z$ up to \textit{quadratic} order:
\begin{itemize}
\item $\Gamma(1+x) = 1 - \gamma x + \frac{1}{2}(\gamma^2 + \frac{\pi^2}{6})$.
\item $1/\Gamma(1+x) = 1 + \gamma x + \frac{1}{2}(\gamma^2-\frac{\pi^2}{6})$.
\item $\Gamma(\frac{1}{2}+x) = \sqrt{\pi} - \sqrt{\pi}(2 \, \mathrm{ln}\, 2 + \gamma) x (\frac{1}{2}\pi^{\frac{5}{2}} + \sqrt{\pi}(2\,\mathrm{ln}\, 2 + \gamma)^2)x^2$.
\item $\frac{k}{e^{-x}-k} = \frac{k}{1-k} + \frac{k}{(1-k)^2} x + \frac{k(1+k)}{(1-k)^3}x^2$.
\end{itemize}
This allows us to write $\mathbb{U}_K$ as a direct sum of $(3 \times 3)$ blocks and $(1 \times 1)$ blocks, which may be verified to be symplectic just as before. Therefore, $\mathbb{U}_K$ is symplectic.
\end{proof}

\begin{rem}
The case for $E = \{X^2 + Y^3 + Z^6 = 0\}$ proceeds entirely similarly, with extra terms coming from the sectors $\mathbf{1}_{\sigma g}.$
\end{rem}

\begin{thm}
\textbf{(Two-parameter Landau-Ginzburg/Calabi-Yau correspondence for a Fermat-type Borcea-Voisin orbifold)} Let $\mathcal{Y}$ be the Borcea-Voisin orbifold $[E \times K/\mathbb{Z}_2]$ given by $E = \{X^2 + Y^4+Z^4 = 0\},$ and $K = \{x^2 + y^6 + z^6 + w^6 = 0\}$, with $1 \in \mathbb{Z}_2$ acting by sending $(X, x) \mapsto (-X, -x)$.  Then there exists a two-parameter analytic continuation ${I''}_{\mathrm{GW}}^{\mathcal{Y}}$ of $I_{\mathrm{GW}}^{\mathcal{Y}}$ and a symplectic transformation $\mathbb{U}:H_{\mathrm{FJRW}}^{(W, G)} \to H_{\mathrm{GW}}$  sending  $I_{\mathrm{FJRW}}^{(W, G)}$ to ${I''}^{\mathcal{Y}}_{\mathrm{GW}}$.
\end{thm}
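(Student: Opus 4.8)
The plan is to deduce the two-parameter correspondence by iterating the two single-parameter continuations established in the preceding propositions. Starting from $I_{\mathrm{GW}}^{\mathcal{Y}}$ on its domain of convergence $\tilde{q}_1 < 1/4^3$, $\tilde{q}_2 < 1/6^4$, I would first analytically continue in the elliptic variable $\tilde{q}_1$ from $0$ to $1$ exactly as in the correspondence with respect to the elliptic curve, producing the continuation $I'_{\mathrm{GW}}$ whose $\mathbb{U}_E^{-1}$-image is the mixed I-function $I_{\mathrm{FJRW},\mathrm{GW}}$. I would then continue $I'_{\mathrm{GW}}$ in the K3 variable $\tilde{q}_2$ from $0$ to $1$ by the identical Mellin--Barnes argument used in the correspondence with respect to the K3 surface, which is legitimate because the first continuation leaves the K3 factors untouched. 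The composite is a two-parameter analytic continuation ${I''}_{\mathrm{GW}}^{\mathcal{Y}}$ of $I_{\mathrm{GW}}^{\mathcal{Y}}$, and the transformation $\mathbb{U}$ is the composition of the two symplectic transformations produced at each stage.

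The reason the iteration goes through cleanly is the product structure of each summand of $I_{\mathrm{GW}}^{\mathcal{Y}}$: the factors depending on $a$ (through $F_{\mathbf{b}}$, $K_{\mathbf{b}}$ and the exponentials in $t_1,t_2,t_3$) and those depending on $b$ (through $G_{\mathbf{b}}$, $L_{\mathbf{b}}$ and the exponentials in $t_4,\ldots,t_7$) enter as separate multiplicative pieces, as already observed in the convergence analysis. Consequently the continuation in $\tilde{q}_1$ acts only on the elliptic factors and that in $\tilde{q}_2$ only on the K3 factors; the two residue computations commute, and performing them in either order yields the same double sum. Continuing the K3 factors of $I_{\mathrm{FJRW},\mathrm{GW}}$ is literally the computation that took pure $I_{\mathrm{GW}}$ to $I_{\mathrm{GW},\mathrm{FJRW}}$, so after identifying $\phi_{J_2}\mapsto \mathbf{1}_0$, $\phi_{J_2^3}\mapsto D_K$, $\phi_{\sigma_K J_2^2}\mapsto \mathbf{1}_{\sigma_K}$ and rescaling $\tilde{t}_2 = \tfrac{1}{2}\tilde{q}_2^{-1/6}$ as before, the interior K3 sum becomes the FJRW weight $\omega_h^{\mathrm{FJRW}}$ in the $N,C$ variables while the elliptic sum is already in the $M,C$ variables. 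Matching the two bookkeeping schemes reproduces $I_{\mathrm{FJRW}}^{(W,G)}$ with $h(M,N,C)$ as stated.

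For the symplectic statement I would argue that $\mathbb{U}$, interpreted as the composition of $\mathbb{U}_E$ and $\mathbb{U}_K$ acting on the two tensor factors of the state space, is symplectic because it is a composition of symplectic maps, and degree-preserving for $\deg z = 2$ since each factor is. Concretely, on the sectors untwisted by $\sigma$ the transformation is the tensor product of the $2\times 2$ block of $\mathbb{U}_E$ on $\{\phi_{J_1}\mathbf{1}_b,\phi_{J_1^3}\mathbf{1}_b\}$ and the $3\times 3$ block of $\mathbb{U}_K$ on the K3 generators, and one reads off symplecticity block by block from the explicit Taylor expansions in $D_E/z$ and $D_K/z$ recorded in the two proofs.

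The main obstacle is the coupling of the two factors through the half-integer variable $c$ and the $\sigma$-twisted sectors, where the naive tensor-product picture breaks down. Here $c$ appears simultaneously in $F_{\mathbf{b}}$, in $G_{\mathbf{b}}$ and in the shared denominator $\Gamma(2c+1)$, and the relation $D_E\mathbf{1}_\sigma = 0$, together with the special role of $D_K\mathbf{1}_\sigma$, forces the $\sigma$-coupled sectors to be treated as $1\times 1$ blocks rather than through the generic tensor-product blocks. I would need to verify that the two continuations, each holding $c$ fixed, genuinely commute on these sectors, that the matching of the indices $\tilde{\sigma}$, $n_\sigma$ and $c$ is consistent across both stages, and that the resulting $1\times 1$ entries, reducing to $\pm i\,e^{-z}$-type factors as in the single-parameter proofs, remain symplectic after composition. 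Confirming the interchange of the double residue sum and that the composite continuation crosses no poles beyond those accounted for in the one-parameter cases is the remaining technical point.
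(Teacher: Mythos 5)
Your proposal follows essentially the same route as the paper: the paper likewise performs the two Mellin--Barnes continuations of $I_{\mathrm{GW}}$ in $\tilde{q}_1$ and $\tilde{q}_2$ ``as in the previous two theorems,'' exploits the same termwise factorization into elliptic and K3 pieces, identifies the resulting interior double sum with $\tfrac{1}{4}\omega_{\mathrm{FJRW}}^h$ under the same identifications of generators and indices, and assembles $\mathbb{U}$ from $\mathbb{U}_E$ and $\mathbb{U}_K$, verifying symplecticity block by block ($3\times 3$ blocks for the untwisted K3 parts and $2\times 2$ blocks for the $\sigma$-twisted sectors $\phi_{\sigma_K J_2^2},\phi_{\sigma_K J_2^4}$). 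The coupling through the shared half-integer variable $c$ and the $\sigma$-sectors that you flag as the main obstacle is precisely what the paper's explicit double-sum formula and its block decomposition of $\mathbb{U}$ resolve, so your outline matches the paper's proof in both strategy and the points requiring explicit computation.
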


\begin{proof}
We analytically continue $I_{\mathrm{GW}}(\mathbf{t}, z)$ with respect to $\tilde{q}_1$ and $\tilde{q}_2$ as in the previous two theorems. After precisely the same termwise manipulations, we find that $I_{\mathrm{GW}}''(\mathbf{t}, z)$ is given by 
\begin{align*}
& \frac{\pi}{6} (K_0L_0\sum_{\substack{\mu = 1, 3\\\nu = 1, 3, 5\\ \tilde{\sigma} \in \mathbb{Z}_6}} + \, K_{\sigma}L_{\sigma} \sum _{\substack{\mu = 1, 3\\\nu = 1, 3, 5\\ \tilde{\sigma} \in \frac{1}{2}\mathbb{Z}_6\backslash \mathbb{Z}_6}})  \frac{(-1)^{\mu + \nu}E_1(\mu, \tilde{\sigma})E_2(\nu, \tilde{\sigma})}{\Gamma(1-\frac{\mu}{2})\Gamma(1-(\frac{\mu}{4}+\frac{\tilde{\sigma}}{2}))^2\Gamma(1-\frac{\nu}{2})\Gamma(1-(\frac{\nu}{6}+\frac{\tilde{\sigma}}{2}))^3}\\
& \times \sum_{\substack{l_m, l_n, l_c \ge 0\\ m :=4l_m + \mu\\n:=6l_n + \nu\\ c := 4l_c + \tilde{\sigma}}} \frac{1}{2^m2^n}\ \frac{(-1)^{l_m+l_n+l_c}\tilde{q}_1^{-m/4}\tilde{q}_2^{-n/6}\tilde{q}_3^c\Gamma(\frac{m}{4} + \frac{c}{2})^2\Gamma(\frac{n}{6}+ \frac{c}{3})^3}{\Gamma(\frac{\mu}{2})\Gamma(\frac{\mu}{4}+\frac{\tilde{\sigma}}{2})^2\Gamma(\frac{\nu}{2})\Gamma(\frac{\nu}{6} + \frac{\tilde{c}}{2})^3\Gamma(\frac{m}{2}+\frac{1}{2})\Gamma(\frac{n}{2} + \frac{1}{2})\Gamma(1+2c)}.\\
\end{align*} 

If we identify
\begin{align*}
&t_{J_1^3J_2} \mapsto \tilde{q}_1^{-1/4},\\
&t_{J_1J_2^3} \mapsto \tilde{q}_2^{-1/6},\\
&t_{\sigma J_1^2J_2^2} \mapsto \tilde{q}_3,\\
&2\mathbf{n}(J_1^3J_2) + 1 \mapsto m,\\
&2\mathbf{n}(J_1J_2^3)+1 \mapsto n,\\
&n_{\sigma_KJ_1^2} \mapsto 2c,
\end{align*}
for the classes of degree 2, then the interior sum is exactly $\frac{1}{4}\omega_{\mathrm{FJRW}}^h$. This allows us to set up a linear map $\mathbb{U}$ as before, put together from $\mathbb{U}_E$ and $\mathbb{U}_K$. That is, splitting the FJRW basis vectors into their $E$- and $K$-parts as we have for the other two theories, this gives us $\mathbb{U}$ as a direct sum of $(3 \times 3)$ blocks (for $\phi_{J_2}, \phi_{J_2^3}, \phi_{J_2^5}$) and $(2 \times 2)$ blocks (for $\phi_{\sigma_KJ_2^2}$ and $\phi_{\sigma_K J_2^4}$), which can all be checked to be symplectic. Therefore, $\mathbb{U}$ is also symplectic, and this verifies the Landau-Ginzburg/Calabi-Yau correspondence for our case.
\end{proof}

% from basis
%$$(\phi_{J_1}\mathbf{1}_0, \phi_{J__1^3}, \phi_{\sigma E J_1^2}.$$

%This gives us a map from each $\phi_{J_1}\mathbf{1_{b_K}}$ to 

%We have a factor $e^{-z}$

%Splitting $\mathbf{1_b} = $ for $\mathbf{b}$  periodicity. Under the identification $n_{\sigma} = 2c + \sum_{j=l+1}^m k_j$
%Under the Then the interior sum above is exactly the term corresponding to $\phi_h\mathbf{1_{b_K}}$ in $I_{\mathrm{int}}(\mathbf{t}, z).$ This gives us a linear map  matrix

%Taylor

%Direct computation shows this to be symplectic. That is, $\mathbf{U}_E(z)\mathbf{U}^*_E(-z) = \mathrm{Id}.$

%This gives us a map $\phi$

%$q_k$

%We analytically continue with respect to $q_2$ to find exactly the same

%For 3, 1, 1, 1

\begin{bibsection}
\begin{biblist}

\bib{Ac}{article}{
author = {Acosta, Pedro}
title = {Asymptotic Expansion and the LG/(Fano, General Type) Correspondence}
journal = {Arxiv pre-print}
date = {2014}
eprint = {http://arxiv.org/abs/1411.4162v2}
}
\bib{ABS}{article}{
author = {Artebani, Michela}
author = {Boissi\`ere, Samuel}
author = {Sarti, Alessandra}
title = {Borcea-Voisin Calabi-Yau Threefolds and Invertible Potentials}
journal = {Arxiv pre-print}
date = {2015}
}

\bib{B}{article}{
author = {Borcea, Ciprian}
title = {Calabi-Yau threefolds and complex multiplication}
journal = {Essays on mirror manifolds}
pages = {489--502}
publisher = {Int. Press}
place = {Hong Kong}
date = {1992}
}
\bib{BCS}{article}{
author = {Borisov, Lev}
author = {Chen, Linda}
author = {Smith, Gregory}
title = {The orbifold Chow ring of toric Deligne-Mumford stacks}
journal = {J. Amer. Math. Soc.}
volume = {18(1)}
pages = {193-215}
date = {2005}
}
\bib{CDGP}{article}{
author = {Candelas, Philip}
author = {de la Ossa, Xenia}
author = {Green, Paul}
author = {Parks, Linda}
title = {A pair of Calabi-Yau manifolds as an exactly soluble superconformal field theory}
journal = {Nuclear Physics B}
volume = {359 (1)}
pages = {21--74}
date = {1991}
}
\bib{CIR}{article}{
author = {Chiodo, Alessandro}
author = {Iritani, Hiroshi}
author = {Ruan, Yongbin}
title = {Landau-Ginzburg/Calabi-Yau Correspondence, Global Mirror Symmetry and Orlov Equivalence}
date = {2013}
journal = {ArXiv e-print}
eprint = {http://arxiv.org/abs/1201.0813v3}
}
\bib{CCIT1}{article}{
author = {Coates, Tom}
author = {Corti, Alessio}
author = {Iritani, Hiroshi}
author = {Tseng, Hsian-Hua}
title = {A Mirror Theorem for Toric Stacks}
date = {2013}
journal = {ArXiv e-print}
eprint = {http://arxiv.org/abs/1310.4163}
}

\bib{CCIT2}{article}{
author = {Coates, Tom}
author = {Corti, Alessio}
author = {Iritani, Hiroshi}
author = {Tseng, Hsian-Hua}
title = {Some Applications of the Mirror Theorem for Toric Stacks}
date = {2014}
journal = {ArXiv e-print}
eprint = {http://arxiv.org/abs/1401.2611}
}
\bib{CheRu1}{article}{
author = {Chen, Weimin}
author = {Ruan, Yongbin}
title = {Orbifold Gromov-Witten theory}
journal = {Contemp. Math.}
volume = {310}
pages = {25--85}
date = {2002}
}
\bib{CheRu2}{article}{
author = {Chen, Weimin}
author = {Ruan, Yongbin}
title = {A new cohomology theory of orbifold}
journal = {Comm. Math. Phys.}
volume = {248(1)}
pages = {1--31}
date = {2004}
}
\bib{ChiRu1}{article}{
author = {Chiodo, Alessandro}
author = {Ruan, Yongbin}
title = {Landau-Ginzburg/Calabi-Yau correspondence for quintic threefolds via symplectic transformations}
journal = {Invent. Math.}
volume = {182(1)}
pages = {117--165}
date = {2008}
}

\bib{ChiRu2}{article}{
author = {Chiodo, Alessandro}
author = {Ruan, Yongbin}
title = {LG/CY correspondence: the state space isomorphism}
journal = {Adv. Math.}
volume = {227(6)}
pages = {2157--2188}
date = {2011}
}
\bib{CZ}{article}{
author = {Chiodo, Alessandro}
author = {Zvonkine, Dimitri}
title = {Twisted Gromov-Witten r-spin potentials and Givental's quantization}
journal = {Arxiv e-prints}
eprint = {http://arxiv.org/abs/0711.0339}
date = {2007}
}
\bib{Cl}{article}
{
author = {Emily Clader}
title = {Landau-Ginzburg/Calabi-Yau correspondence for the complete intersections $X_{3, 3}$ and $X_{2, 2, 2, 2}$}
journal = {ArXiv e-prints}
eprint = {http://arxiv.org/abs/1301.5530v3}
date = {2013}
}
\bib{CPS}{article}{
author = {Clader, Emily}
author = {Priddis, Nathan}
author = {Shoemaker, Mark}
title = {Geometric Quantization with Applications to Gromov-Witten Theory}
eprint = {http://arxiv.org/abs/1309.1150}
date = {2013}
}
\bib{FJR}{article}{
author = {Fan, Huijin}
author = {Jarvis, Tyler}
author = {Ruan, Yongbin}
title = {The Witten equation, mirror symmetry, and quantum singularity theory}
journal = {Ann. of Math. (2)}
volume = {178(1)}
pages = {1--106}
date = {2013}
}
\bib{FJRnew}{article}{
author = {Fan, Huijin}
author = {Jarvis, Tyler}
author = {Ruan, Yongbin}
title = {TBA}
journal = {Yet unpublished}
volume = {}
pages = {}
date = {2015}
}
\bib{FP}{article}{
author = {Fulton, William}
author = {Pandharipande, Rahul}
title = {Notes on stable maps and quantum cohomology}
eprint = {alg-geom/9608011}
}
\bib{Giv}{article}{
author = {Givental, Alexander}
title = {Equivariant Gromov-Witten invariants}
journal = {Internat. Math. Res. Notices (13)}
pages = {613-663}
date = {1996}
}
\bib{GoLiYu}{article}{
author = {Goto, Yasuhiro}
author = {Livn'e, Ron}
author = {Yui, Noriko}
title = {Automorphy of Calabi-Yau threefolds of Borcea-Voisin type over $\mathbb{Q}$}
journal = {Comm. in Num. Th. and Phys.}
date = {2012}
}
\bib{Gue}{article}{
author = {Gu\'er\'e, J\'er\'emy}
title = {A Landau-Ginzburg mirror theorem without concavity}
journal = {ArXiv e-print}
eprint = {http://arxiv.org/abs/1307.5070}
date = {2013}
}
\bib{K}{thesis}{
author = {Mark Krawitz}
title = {FJRW and Landau-Ginzburg Mirror Symmetry}
university = {University of Michigan}
date = {2009}
}
\bib{LLY}{article}{
author = {Lian, Bong}
author = {Liu, Kefeng}
author = {Yau, Shing-tung}
title = {Mirror Principle I}
journal = {Arxiv pre-prints}
eprint = {http://arxiv.org/abs/971.2011v1}
date = {1997}
}
\bib{N}{article}{
author = {Nikulin, V.V.}
title = {Konyechniye gruppi avtomortizmov kyelyerovnikh soverkhnostyei tipa K3}
journal = {Trudy Moskov. Mat. Obshch.}
volume = {38}
pages = {75--137}
date = {1979}
}
\bib{PS}{article}{
author = {Priddis, Nathan}
author = {Shoemaker, Mark}
title = {A Landau-Ginzburg/Calabi-Yau correspondence for the Mirror Quintic}
journal = {ArXiv pre-prints}
eprint = {http://arxiv.org/abs/1309.6262v1}
date = {2013}
}
\bib{V}{article}{
author = {Voisin, Claire}
title = {Miroirs et  involutions sur les surfaces K3}
journal = {Ast\'erisque}
volume = {218}
pages = {273-323}
date = {1993}
}
\bib{Wit}{article}{
author = {Witten, Edward}
title = {Mirror manifolds and topological quantum field theory}
journal = {Essays on mirror manifolds}
pages = {120--158}
publisher = {Int. Press}
place = {Hong Kong}
date = {1992}
}

\end{biblist}

\end{bibsection}

%\section{Relevance to Mirror Symmetry}

%Similarly, 

\end{document}